\newcommand\mtop{1in}
\newcommand\mbottom{1in}
\newcommand\mleft{1.2in}
\newcommand\mright{1.2in}
\providecommand{\mparwidth}{1in}
\providecommand{\mtop}{1in}
\providecommand{\mbottom}{1in}
\providecommand{\mleft}{1.2in}
\providecommand{\mright}{1.2in}
\setlist[enumerate,1]{leftmargin=6ex,topsep=-5pt}
\setlist[enumerate]{itemsep=7pt}
\setlist[itemize,1]{leftmargin=4ex,topsep=0em}
\setlist[itemize]{itemsep=5pt}
\setlist[itemize,2]{label=$\circ$}
\setlist[itemize,3]{label={\scalefont{0.6}\color{gray}$\blacktriangleright$}}
\setlist[itemize,4]{label=$\ast$}
\setlist{nolistsep}
\begin{document}

\newcommand{\theoremnumstyle}{section}
\parskip=0.2in \parindent=0in
\allowdisplaybreaks 
\raggedbottom 

\newcommand*{\replacecommand}[1]{%
  \providecommand{#1}{}%
  \renewcommand{#1}%
}

\renewcommand{\arraystretch}{1.5}

\renewcommand{\l}{\overset}
\newcommand{\into}{\hookrightarrow}
\newcommand{\onto}{\twoheadrightarrow}
\newcommand{\tto}{\longrightarrow}
\newcommand{\too}[1]{\l{#1}\to}
\newcommand{\ttoo}[1]{\l{#1}\longrightarrow}
\newcommand{\intoo}[1]{\l{#1}\into}
\newcommand{\ontoo}[1]{\l{#1}\onto}
\newcommand{\mapstoo}[1]{\l{#1}\mapsto}
\newcommand{\bto}{\leftarrow}
\newcommand{\btto}{\longleftarrow}
\newcommand{\btoo}[1]{\l{#1}\bto}
\newcommand{\bttoo}[1]{\l{#1}\longleftarrow}
\newcommand{\binto}{\hookleftarrow}
\newcommand{\bonto}{\twoheadleftarrow}
\newcommand{\bintoo}[1]{\l{#1}\binto}
\newcommand{\bontoo}[1]{\l{#1}\bonto}
\newcommand{\ointo}{\hspace{3pt}\text{\raisebox{-1.5pt}{$\overset{\circ}{\vphantom{}\smash{\text{\raisebox{1.5pt}{$\into$}}}}$}}\hspace{3pt}}
\newcommand{\lu}{\underset}
\newcommand{\bimplies}{\impliedby}
\newcommand{\ints}{\cap}
\newcommand{\intss}{\bigcap}
\newcommand{\union}{\cup}
\newcommand{\unions}{\bigcup}
\newcommand{\djunion}{\sqcup}
\newcommand{\djunions}{\bigsqcup}
\newcommand{\propersubset}{\subsetneq}
\newcommand{\propersupset}{\supsetneq}
\newcommand{\contains}{\supseteq}
\newcommand{\semidirect}{\rtimes}
\newcommand{\isom}{\cong}
\newcommand{\normal}{\triangleleft}
\replacecommand{\dsum}{\oplus}
\newcommand{\dsums}{\bigoplus}
\newcommand{\tensor}{\otimes}
\newcommand{\tensors}{\bigotimes}
\newcommand{\cotensor}{{\,\scriptstyle\square}}
\let\originalbar\bar
\renewcommand{\bar}[1]{{\overline{#1}}}
\newcommand{\rlim}{\mathop{\varinjlim}\limits}
\newcommand{\llim}{\mathop{\varprojlim}\limits}
\newcommand{\x}{\times}
\replacecommand{\st}{\hspace{2pt} : \hspace{2pt}} 
\newcommand{\vv}{\vspace{10pt}}
\newcommand{\til}{\widetilde}
\renewcommand{\hat}{\widehat}
\newcommand{\hhat}{\wedge}
\newcommand{\iy}{\infty}
\newcommand{\hteq}{\simeq}
\newcommand{\dd}[2]{\frac{\partial #1}{\partial #2}}
\newcommand{\sm}{\wedge} 
\newcommand{\noqed}{\renewcommand{\qedsymbol}{}}
\newcommand{\adjoint}{\dashv}
\newcommand{\wreath}{\wr}
\newcommand{\heart}{\heartsuit}

\newcommand{\bigast}{\mathop{\vphantom{\sum}\mathchoice%
  {\vcenter{\hbox{\huge *}}}
  {\vcenter{\hbox{\Large *}}}{*}{*}}\displaylimits}

\renewcommand{\dim}{\operatorname{dim}}
\newcommand{\diam}{\operatorname{diam}}
\newcommand{\coker}{\operatorname{coker}}
\newcommand{\im}{\operatorname{im}}
\newcommand{\disc}{\operatorname{disc}}
\newcommand{\Pic}{\operatorname{Pic}}
\newcommand{\Der}{\operatorname{Der}}
\newcommand{\ord}{\operatorname{ord}}
\newcommand{\nil}{\operatorname{nil}}
\newcommand{\rad}{\operatorname{rad}}
\newcommand{\ssum}{\operatorname{sum}}
\newcommand{\codim}{\operatorname{codim}}
\newcommand{\cchar}{\operatorname{char}}
\newcommand{\sspan}{\operatorname{span}}
\newcommand{\rank}{\operatorname{rank}}
\newcommand{\Aut}{\operatorname{Aut}}
\newcommand{\Out}{\operatorname{Out}}
\newcommand{\Div}{\operatorname{Div}}
\newcommand{\Gal}{\operatorname{Gal}}
\newcommand{\Hom}{\operatorname{Hom}}
\newcommand{\Mor}{\operatorname{Mor}}
\newcommand{\Vect}{\operatorname{Vect}}
\newcommand{\Fun}{\operatorname{Fun}}
\newcommand{\Iso}{\operatorname{Iso}}
\newcommand{\Map}{\operatorname{Map}}
\newcommand{\Ho}{\operatorname{Ho}}
\newcommand{\Mod}{\operatorname{Mod}}
\newcommand{\Tot}{\operatorname{Tot}}
\newcommand{\cofib}{\operatorname{cofib}}
\newcommand{\fib}{\operatorname{fib}}
\newcommand{\hocofib}{\operatorname{hocofib}}
\newcommand{\hofib}{\operatorname{hofib}}
\newcommand{\Maps}{\operatorname{Maps}}
\newcommand{\Sym}{\operatorname{Sym}}
\newcommand{\Diff}{\operatorname{Diff}}
\newcommand{\Tr}{\operatorname{Tr}}
\newcommand{\Frac}{\operatorname{Frac}}
\renewcommand{\Re}{\operatorname{Re}} 
\renewcommand{\Im}{\operatorname{Im}}
\newcommand{\gr}{\operatorname{gr}}
\newcommand{\tr}{\operatorname{tr}}
\newcommand{\End}{\operatorname{End}}
\newcommand{\Mat}{\operatorname{Mat}}
\newcommand{\Proj}{\operatorname{Proj}}
\newcommand{\Th}{\operatorname{Thom}}
\newcommand{\Thom}{\operatorname{Thom}}
\newcommand{\Spec}{\operatorname{Spec}}
\newcommand{\Ext}{\operatorname{Ext}}
\newcommand{\Cotor}{\operatorname{Cotor}}
\newcommand{\Tor}{\operatorname{Tor}}
\newcommand{\vol}{\operatorname{vol}}

\newcommand{\Set}{\operatorname{Set}}
\newcommand{\Top}{\operatorname{Top}}
\newcommand{\Fin}{\operatorname{Fin}}
\newcommand{\Spaces}{\operatorname{Spaces}}
\newcommand{\Sp}{\operatorname{Sp}}
\newcommand{\Spectra}{\operatorname{Spectra}}
\newcommand{\Spt}{\operatorname{Spt}}
\newcommand{\Comod}{\operatorname{Comod}}
\newcommand{\Spf}{\operatorname{Spf}}
\newcommand{\tmf}{\mathit{tmf}}
\newcommand{\Tmf}{\mathit{Tmf}}
\newcommand{\TMF}{\mathit{TMF}}
\newcommand{\Null}{\operatorname{Null}}
\newcommand{\Fil}{\operatorname{Fil}}
\newcommand{\Sq}{\operatorname{Sq}}
\newcommand{\Stable}{\operatorname{Stable}}
\newcommand{\Poly}{\operatorname{Poly}}
\newcommand{\Cat}{\operatorname{Cat}}
\newcommand{\Orb}{\operatorname{Orb}}
\newcommand{\Exc}{\operatorname{Exc}}
\newcommand{\Part}{\operatorname{Part}}
\newcommand{\Comm}{\operatorname{Comm}}
\newcommand{\Res}{\operatorname{Res}}
\newcommand{\Thick}{\operatorname{Thick}}
\newcommand{\red}{{\operatorname{red}}}

\newcommand{\mmf}{\mathit{mmf}}
\newcommand{\Sm}{\operatorname{Sm}}
\newcommand{\Var}{\operatorname{Var}}
\newcommand{\Frob}{\operatorname{Frob}}
\newcommand{\Rep}{\operatorname{Rep}}
\newcommand{\Ch}{\operatorname{Ch}}
\newcommand{\Shv}{\operatorname{Shv}}
\newcommand{\Corr}{\operatorname{Corr}}
\newcommand{\Span}{\operatorname{Span}}
\newcommand{\Sch}{\operatorname{Sch}}
\newcommand{\ev}{\operatorname{ev}}
\newcommand{\Homog}{\operatorname{Homog}}
\newcommand{\conn}{\operatorname{conn}}
\newcommand{\type}{\operatorname{type}}
\newcommand{\num}{\operatorname{num}}
\newcommand{\Aff}{\operatorname{Aff}}
\newcommand{\Psh}{\operatorname{Psh}}
\newcommand{\sk}{\operatorname{sk}}
\newcommand{\cosk}{\operatorname{cosk}}
\newcommand{\Cart}{\operatorname{Cart}}

\newcommand{\Br}{\operatorname{Br}}
\newcommand{\BW}{\operatorname{BW}}
\newcommand{\Cl}{\operatorname{Cl}}
\newcommand{\Conf}{\operatorname{Conf}}
\newcommand{\Alg}{\operatorname{Alg}}
\newcommand{\CAlg}{\operatorname{CAlg}}
\newcommand{\Lie}{\operatorname{Lie}}
\newcommand{\Coalg}{\operatorname{Coalg}}
\newcommand{\Ab}{\operatorname{Ab}}
\newcommand{\Ind}{\operatorname{Ind}}
\newcommand{\ind}{\operatorname{ind}}
\newcommand{\Fix}{\operatorname{Fix}}
\newcommand{\ho}{\operatorname{ho}}
\newcommand{\coeq}{\operatorname{coeq}}
\newcommand{\CMon}{\operatorname{CMon}}
\newcommand{\Sing}{\operatorname{Sing}}
\newcommand{\Inj}{\operatorname{Inj}}
\newcommand{\StMod}{\operatorname{StMod}}
\newcommand{\Loc}{\operatorname{Loc}}
\newcommand{\Free}{\operatorname{Free}}
\newcommand{\Art}{\operatorname{Art}}
\newcommand{\Gpd}{\operatorname{Gpd}}
\newcommand{\Def}{\operatorname{Def}}
\newcommand{\Hyp}{\operatorname{Hyp}}
\newcommand{\Pre}{\operatorname{Pre}}
\newcommand{\Lat}{\operatorname{Lat}}
\newcommand{\Coords}{\operatorname{Coords}}
\newcommand{\cone}{\operatorname{cone}}
\newcommand{\Spc}{\operatorname{Spc}}
\newcommand{\QCoh}{\operatorname{QCoh}}
\newcommand{\height}{\operatorname{ht}}
\newcommand{\Sub}{\operatorname{Sub}}
\newcommand{\Cone}{\operatorname{Cone}}
\newcommand{\Cocone}{\operatorname{Cocone}}
\newcommand{\Ran}{\operatorname{Ran}}
\newcommand{\Lan}{\operatorname{Lan}}
\newcommand{\LieAlg}{\operatorname{LieAlg}}
\newcommand{\Com}{\operatorname{Com}}
\newcommand{\CoAlg}{\operatorname{CoAlg}}
\newcommand{\Prim}{\operatorname{Prim}}
\newcommand{\Coh}{\operatorname{Coh}}
\newcommand{\FormalGrp}{\operatorname{FormalGrp}}
\newcommand{\Fact}{\operatorname{Fact}}
\renewcommand{\Bar}{\operatorname{Bar}}
\newcommand{\Cobar}{\operatorname{Cobar}}
\newcommand{\Ad}{\operatorname{Ad}}
\newcommand{\Moduli}{\operatorname{Moduli}}
\newcommand{\dgla}{\operatorname{dgla}}
\newcommand{\obl}{\operatorname{obl}}
\newcommand{\ob}{\operatorname{ob}}
\newcommand{\IndCoh}{\operatorname{IndCoh}}
\newcommand{\Cocomm}{\operatorname{Cocomm}}
\newcommand{\PreStk}{\operatorname{PreStk}}
\newcommand{\FormGrp}{\operatorname{FormGrp}}
\newcommand{\FormMod}{\operatorname{FormMod}}
\newcommand{\Grp}{\operatorname{Grp}}
\newcommand{\CommAlg}{\operatorname{CommAlg}}
\newcommand{\CoComm}{\operatorname{CoComm}}
\newcommand{\IndSch}{\operatorname{IndSch}}
\newcommand{\Dist}{\operatorname{Dist}}
\newcommand{\Triv}{\operatorname{Triv}}
\newcommand{\Oper}{\operatorname{Oper}}
\newcommand{\Bij}{\operatorname{Bij}}
\newcommand{\Syl}{\operatorname{Syl}}
\newcommand{\Inn}{\operatorname{Inn}}
\newcommand{\Emb}{\operatorname{Emb}}
\newcommand{\Gr}{\operatorname{Gr}}
\newcommand{\CRing}{\operatorname{CRing}}
\newcommand{\sSet}{\operatorname{sSet}}
\newcommand{\et}{\text{\'et}}
\newcommand{\Sh}{\operatorname{Sh}}
\newcommand{\Nil}{\operatorname{Nil}}
\newcommand{\Cech}{\v Cech}
\newcommand{\Stacks}{\operatorname{Stacks}}
\newcommand{\Pin}{\operatorname{Pin}}
\newcommand{\sgn}{\operatorname{sgn}}

\newcommand{\A}{\mathbb{A}}
\replacecommand{\C}{\mathbb{C}}
\newcommand{\CP}{\mathbb{C}\mathrm{P}}
\newcommand{\E}{\mathbb{E}}
\newcommand{\F}{\mathbb{F}}
\replacecommand{\G}{\mathbb{G}}
\renewcommand{\H}{\mathbb{H}}
\newcommand{\K}{\mathbb{K}}
\newcommand{\M}{\mathbb{M}}
\newcommand{\N}{\mathbb{N}}
\renewcommand{\P}{\mathbb{P}}
\newcommand{\Q}{\mathbb{Q}}
\newcommand{\R}{\mathbb{R}}
\newcommand{\RP}{\mathbb{R}\mathrm{P}}
\newcommand{\V}{\vee}
\newcommand{\T}{\mathbb{T}}
\providecommand{\U}{\mathscr{U}}
\newcommand{\Z}{\mathbb{Z}}
\newcommand{\e}{\mathfrak{g}}
\newcommand{\m}{\mathfrak{m}}
\newcommand{\n}{\mathfrak{n}}
\newcommand{\p}{\mathfrak{p}}
\newcommand{\q}{\mathfrak{q}}
\renewcommand{\t}{\mathfrak{t}}

\newcommand{\pa}[1]{\left( {#1} \right)}
\newcommand{\br}[1]{\left[ {#1} \right]}
\newcommand{\cu}[1]{\left\{ {#1} \right\}}
\newcommand{\ab}[1]{\left| {#1} \right|}
\newcommand{\an}[1]{\left\langle {#1}\right\rangle}
\newcommand{\fl}[1]{\left\lfloor {#1}\right\rfloor}
\newcommand{\ceil}[1]{\left\lceil {#1}\right\rceil}
\newcommand{\tf}[1]{{\textstyle{#1}}}
\newcommand{\patf}[1]{\pa{\textstyle{#1}}}

\renewcommand{\mp}{\ \raisebox{5pt}{\text{\rotatebox{180}{$\pm$}}}\ }
\renewcommand{\d}[1]{\ss \mathrm{d}#1}
\newcommand{\imod}{\hspace{-7pt}\pmod}

\renewcommand{\epsilon}{\varepsilon}
\renewcommand{\phi}{{\mathchoice{\raisebox{2pt}{\ensuremath\varphi}}{\raisebox{2pt}{\!\! \ensuremath\varphi}}{\raisebox{1pt}{\scriptsize$\varphi$}}{\varphi}}}
\newcommand{\ph}{{\color{white}.\!}}
\newcommand{\tspacer}{{\ensuremath{\color{white}\Big|\!}}}
\newcommand{\chii}{\raisebox{2pt}{\ensuremath\chi}}

\let\originalchi=\chi
\renewcommand{\chi}{{\!{\mathchoice{\raisebox{2pt}{
$\originalchi$}}{\!\raisebox{2pt}{
$\originalchi$}}{\raisebox{1pt}{\scriptsize$\originalchi$}}{\originalchi}}}}

\let\originalforall=\forall
\renewcommand{\forall}{\ \originalforall}

\let\originalexists=\exists
\renewcommand{\exists}{\ \originalexists}

\let\realcheck\check
\newcommand{\vH}{\realcheck{H}}

\newenvironment{qu}[2]
{\begin{list}{}
	  {\setlength\leftmargin{#1}
	  \setlength\rightmargin{#2}}
	  \item[]\footnotesize}
		  {\end{list}}

\newenvironment{titleblock}
{\begin{mdframed}[linecolor=black!20,backgroundcolor=black!15]\begin{center}}
{\end{center}\end{mdframed}}

\newenvironment{shadedblock}[1][5in]
{\bigskip\begin{mdframed}[align=center,userdefinedwidth=#1,linecolor=white,backgroundcolor=black!5]}{\end{mdframed}}

\newenvironment{shadedtitleblock}[2][5in]
{\begin{mdframed}[align=center,userdefinedwidth=#1,linecolor=white,backgroundcolor=black!15]\sc #2\end{mdframed}\begin{mdframed}[align=center,userdefinedwidth=#1,linecolor=white,backgroundcolor=black!5]}{\end{mdframed}}

\newcommand{\shadedheader}[1]{\vspace{25pt}\begin{mdframed}[linecolor=black!20,backgroundcolor=black!5]\sc #1\end{mdframed}\vspace{10pt}}


\newcommand{\itext}{\shortintertext} 
\makeatletter
\@ifundefined{resetu}{ 
	\renewcommand{\u}{\underbracket[0.7pt]} 
}
\makeatother
\makeatletter
\@ifundefined{resetO}{ 
	\renewcommand{\O}{\mathcal{O}}
}
\makeatother
\makeatletter
\@ifundefined{resetk}{ 
	\renewcommand{\k}{\Bbbk}
}
\makeatother

\makeatletter
\@ifundefined{mathds}{
	\newcommand{\Id}{Id}
	}{
	\newcommand{\Id}{\mathds{1}} 
	}
\@ifundefined{sethlcolor}{
	\newcommand{\fixmehl}[2]{\underline{#1}\marginpar{\raggedright\smaller\smaller #2}}
	}{\@ifundefined{marginnote}{
		\newcommand{\highlight}[1]{\ifmmode{\text{\sethlcolor{llgray}\hl{$#1$}}}\else{\sethlcolor{llred}\hl{#1}}\fi}
		\newcommand{\fixmehl}[2]{\highlight{#1}\marginpar{\raggedright\smaller\smaller\color{maroon} #2}}
		}{
		\newcommand{\highlight}[1]{\ifmmode{\text{\sethlcolor{llgray}\hl{$#1$}}}\else{\sethlcolor{llred}\hl{#1}}\fi}
		\newcommand{\fixmehl}[2]{\marginnote{\smaller \smaller\color{maroon} #2}{\highlight{#1}}}
		}
	}
\@ifundefined{xy}{}{
	\SelectTips{cm}{10}  
}
\@ifundefined{color}{}{
	\definecolor{darkgreen}{RGB}{0,70,0}
	\definecolor{dgreen}{RGB}{0,100,0}
	\definecolor{purple}{RGB}{120,00,120}
	\definecolor{gray}{RGB}{100,100,100}
	\definecolor{mgreen}{RGB}{0,150,0}
	\definecolor{dgreen}{RGB}{0,100,0}
	\definecolor{llgray}{RGB}{230,230,230}
	\definecolor{llred}{RGB}{237,228,228}
	\definecolor{lgreen}{RGB}{100,200,100}
	\definecolor{mgray}{RGB}{150,150,150}
	\definecolor{lgray}{RGB}{190,190,190}
	\definecolor{maroon}{RGB}{150,0,0}
	\definecolor{lblue}{RGB}{120,170,200}
	\definecolor{mblue}{RGB}{65,105,225}
	\definecolor{dblue}{RGB}{0,56,111}
	\definecolor{orange}{RGB}{255,165,0}
	\definecolor{brown}{RGB}{177,84,15}
	\definecolor{rose}{RGB}{135,0,52}
	\definecolor{gold}{RGB}{177,146,87}
	\definecolor{dred}{RGB}{135,19,19}
	\definecolor{mred}{RGB}{194,28,28}
	\newcommand{\edit}[1]{{\it{\color{gray}#1}}}
	\newcommand{\fixme}[1]{{\color{maroon}\it{#1}}}
	\newcommand{\citeme}[2][\!\!]{{\color{orange}[#2~\textit{#1}]}}
	\newcommand{\later}[1]{{\color{dgreen}#1}}
	\newcommand{\corr}[1]{{\color{red}\itshape #1}}
	\newcommand{\question}[1]{\itshape{\color{blue}#1}\upshape}
}
\@ifundefined{substack}{}{
    \newcommand{\attop}[1]{{\let\textstyle\scriptstyle\let\scriptstyle\scriptscriptstyle\substack{#1}}}
    \renewcommand{\atop}[1]{{\let\scriptstyle\textstyle\let\scriptscriptstyle\scriptstyle\substack{#1}}}
}
\makeatother

\newcommand{\tabentry}[1]{\renewcommand{\arraystretch}{1}\begin{tabular}{c}#1\end{tabular}}

\newcommand{\margin}[1]{\marginpar{\raggedright \scalefont{0.7}#1}} 

\newcommand{\pullback}{\ar@{}[rd]|<<{\text{\pigpenfont A}}}
\newcommand{\pushout}{\ar@{}[rd]|>>{\text{\pigpenfont I}}}

\newcommand{\longleftrightarrows}{\xymatrix@1@C=16pt{
\ar@<0.4ex>[r] & \ar@<0.4ex>[l]
}}
\newcommand{\longrightrightarrows}{\xymatrix@1@C=16pt{
\ar@<0.4ex>[r]\ar@<-0.4ex>[r] & 
}}
\newcommand{\mapstto}{\,\xymatrix@1@C=16pt{
\ar@{|->}[r] & 
}\,}
\newcommand{\mapsttoo}[1]{\xymatrix@1@C=16pt{
\ar@{|->}[r]^-{#1} & 
}}
\newcommand{\rightrightrightarrows}{\xymatrix@1@C=16pt{
\ar[r]\ar@<0.8ex>[r]\ar@<-0.8ex>[r] & 
}}
\newcommand{\longleftleftarrows}{\xymatrix@C=16pt{
 & \ar@<0.4ex>[l]\ar@<-0.4ex>[l]
}}
\newcommand{\leftleftleftarrows}{\xymatrix@1@C=16pt{
 & \ar[l]\ar@<0.8ex>[l]\ar@<-0.8ex>[l]
}}
\newcommand{\leftleftleftleftarrows}{\xymatrix@1@C=16pt{
 & \ar@<0.8ex>[l]\ar@<0.3ex>[l]\ar@<-0.3ex>[l]\ar@<-0.8ex>[l]
}}
\newcommand{\lcircle}{\ar@(ul,dl)} 
\newcommand{\rcircle}{\ar@(ur,dr)}
\newcommand{\intto}{\ \xymatrix@1@C=16pt{
\ar@{^(->}[r] & 
}}

\newcommand{\eva}[1]
   {{\color{Periwinkle}\it #1}}
\newcommand{\EvaNote}[1]
   {\marginpar{\raggedright\smaller \smaller \color{Periwinkle}#1}}

\makeatletter
\newcommand{\switchmargin}{
\if@reversemargin
\normalmarginpar
\else
\reversemarginpar
\fi
}
\makeatother
\newcommand{\highlighteva}[1]{\ifmmode{\text{\sethlcolor{Lavender}\hl{$#1$}}}\else{\sethlcolor{Lavender}\hl{#1}}\fi}
\newcommand{\EvaNoteHl}[2]{\marginnote{\smaller \smaller \color{Periwinkle}
#2}{\highlighteva{#1}}\switchmargin}


\newtheoremstyle{gloss}{\topsep}{\topsep}{}{0pt}{\bfseries}{}{\newline}{\newline *{\bf #3} }
\theoremstyle{gloss}
\newtheorem*{defstar}{Definition}

\newtheoremstyle{newplain}{20pt}{0pt}{\it}{0pt}{\bfseries}{.}{1ex}{}
\theoremstyle{newplain}

\ifthenelse{\isundefined\theoremnumstyle}
	{\newtheorem{theorem}{Theorem}[section] 
	\numberwithin{equation}{section}} 
	{\ifthenelse{\equal\theoremnumstyle{}}
		{\newtheorem{theorem}{Theorem}}
		{\newtheorem{theorem}{Theorem}[section]
		\numberwithin{equation}{section}
		}
	}

\newtheorem{corollary}[theorem]{Corollary}
\newtheorem{claim}[theorem]{Claim}
\newtheorem{lemma}[theorem]{Lemma}
\newtheorem{proposition}[theorem]{Proposition}
\newtheorem{fact}[theorem]{Fact}

\newtheoremstyle{newtextthm}{20pt}{0pt}{}{0pt}{\bfseries}{.}{1ex}{}
\theoremstyle{newtextthm}
\newtheorem{definition}[theorem]{Definition}
\newtheorem{example}[theorem]{Example}
\newtheorem{problem}[theorem]{Problem}
\newtheorem{remark}[theorem]{Remark}
\newtheorem{notation}[theorem]{Notation}

\newtheorem*{theoremstar}{Theorem}
\newtheorem*{lemmastar}{Lemma}
\newtheorem*{corstar}{Corollary}
\newtheorem*{corollarystar}{Corollary}
\newtheorem*{propositionstar}{Proposition}
\newtheorem*{claimstar}{Claim}
\newtheorem*{examplestar}{Example}

\newcommand{\argforrandom}{}
\theoremstyle{newplain}
\newtheorem{helperforrandom}[theorem]{\argforrandom}
\newtheorem*{helperforrandomstar}{\argforrandom}
\newenvironment{random}[1]{\renewcommand{\argforrandom}{#1}\begin{helperforrandom}}{\end{helperforrandom}}
\newenvironment{randomstar}[1]{\renewcommand{\argforrandom}{#1}\begin{helperforrandomstar}}{\end{helperforrandomstar}}

\newenvironment{exercise}[1]{\hspace{1pt}\nn \large {\sc #1.}\hv \normalsize
\vspace{10pt}\\ }{} 
\newcommand{\subthing}[1]{\hv\large(#1)\hv\hv \normalsize }

\newcommand{\itemref}[1]{(\ref{#1})}

\renewcommand{\showlabelfont}{\tiny\tt\color{mgreen}}

\newcommand{\EE}{E^E}
\newcommand{\el}{\textit{ell}}
\newcommand{\Eel}{E^{\el}}

\renewbibmacro{in:}{}

\title{Beta families arising from a $v_2^9$ self map on $S/(3,v_1^8)$}
\author{Eva Belmont and Katsumi Shimomura}
\maketitle

\begin{abstract}
We show that $v_2^9$ is a permanent cycle in the 3-primary Adams-Novikov
spectral sequence computing $\pi_*(S/(3,v_1^8))$, and use this to conclude that
the families $\beta_{9t+3/i}$ for $i=1,2$, $\beta_{9t+6/i}$
for $i=1,2,3$, $\beta_{9t+9/i}$ for $i=1,\dots,8$, $\alpha_1\beta_{9t+3/3}$, and
$\alpha_1\beta_{9t+7}$ are permanent cycles in the 3-primary Adams-Novikov
spectral sequence for the sphere for all $t\geq 0$.
We use a computer program by Wang
to determine the additive and partial multiplicative
structure of the Adams-Novikov $E_2$ page for the sphere in relevant degrees.
The $i=1$ cases recover previously known results of Behrens-Pemmaraju
\cite{behrens-pemmaraju} and the second author \cite{shimomura-note-beta}.
The results about $\beta_{9t+3/3}$, $\beta_{9t+6/3}$ and $\beta_{9t+9/8}$ were
previously claimed by the second author \cite{shimomura-beta}; the computer calculations
allow us to give a more direct proof.
As an application, we determine the image of the Hurewicz map $\pi_*S \to
\pi_*\tmf$ at $p=3$. 
\end{abstract}

\section{Introduction}\label{sec:intro}
Miller, Ravenel, and Wilson showed that the 2-line of the Adams-Novikov
$E_2$ page for the sphere is generated by classes $\beta_{i/j,k}$ for $i,j,k$
satisfying certain conditions \cite[Theorem 2.6]{MRW}.
At the prime 3, the $\beta$ elements with $i\leq 9$ are:
\begin{align*}
 & \beta_i \text{ for $i=1,2,4,5,7,8$};
\\ & \beta_{3/j} \text{ and }\beta_{6/j} \text{ for $j=1,2,3$};
\\ & \beta_{9/j}\text{ for $j=1,\dots,9$};
\\ & \beta_{9/3,2};
\end{align*}
where we write $\beta_{i/j} := \beta_{i/j,1}$ and $\beta_i := \beta_{i/1}$.
They have order 3 except for $\beta_{9/3,2}$, which satisfies $3\beta_{9/3,2}=\beta_{9/3}$.
Of these, the permanent cycles are the following:
\begin{equation}\label{eq:beta-perm} \beta_1,\ \beta_2,\ \beta_{3/2},\ \beta_3,\ \beta_5,\ \beta_{6/3},\
\beta_{6/2},\ \beta_6,\ \beta_7+ c\beta_{9/9} \text{ for some $c\in \{ \pm 1
\}$},\ \beta_{9/j}\text{ for $1\leq j\leq 8$}. \end{equation}
For $i\leq 7$ and $\beta_7+c\beta_{9/9}$, these assertions
can be read off the exhaustive calculation \cite[Table A3.4]{green} of
$\pi_nS^\hhat_3$ in stems $n\leq 108$; see also \cite{oka-beta} for many of the
survival results and
\cite{shimomura-L2-toda-smith} for the non-survival results. The element
$\beta_7+c\beta_{9/9}$ is an Arf invariant class (an odd-primary analogue of the
$p=2$ Kervaire invariant classes), discussed in \cite[p.439]{ravenel-arf}; the
survival of the Arf invariant classes is not known in general at $p=3$. The
survival of $\beta_{9/j}$ for $j\leq 8$ is a consequence of Theorem
\ref{thm:beta}, which does not depend on prior knowledge about this element, but
we do not claim originality for this result.

These results arise from exhaustive calculations in tractable stems, but it is
possible to prove results about $\beta$ elements outside the range of feasible
computation. One strategy is as follows. Suppose $\beta_{i/j}$ is a
permanent cycle. It is $v_1$-power-torsion; that is, there exists a type 2 complex $V$
such that $\beta_{i/j}$ factors as $S\too{f_1}V \too{f_2}S$ (we omit degree
shifts for clarity of notation). If $v_2^t$ is a $v_2$ self map on $V$, then we may
construct elements of $\pi_*(S)$ as follows:
$$ S\ttoo{f_1} V \ttoo{v_2^t} V\ttoo{f_2} S, \hspace{20pt} t\geq 0. $$
For this family to be of interest, one must also show that the elements are
nonzero, for example by identifying their Adams-Novikov representatives.

Let $S/3$ denote the mod 3 Moore space, and for $m\geq 1$ let
$S/(3,v_1^m)$ denote the cofiber of the $m$-fold iterate of Adams' $v_1$ self map
$S/3\ttoo{v_1}S/3$ \cite{toda-realizing}.
Behrens and Pemmaraju \cite{behrens-pemmaraju} show there is a $v_2^9$ self map on
$S/(3,v_1)$ and use this to prove the existence of nonzero homotopy classes
represented by $\beta_{9t+s}$ for $s=1,2,5,6,9$ and $t\geq 0$. The second author
\cite{shimomura-note-beta} proves the existence of $\beta_{9t+3}$. By comparison to
$L_2$-local homotopy \cite{shimomura-L2-toda-smith}, he shows that the elements
$$ \beta_{9t+4},\ \beta_{9t+7},\ \beta_{9t+8},\ \beta_{9t+3/3},\
\beta_{9t/3,2},\ \beta_{3^is/3^i} $$
are not permanent cycles for $t\geq 1$, $s\not\equiv 0\pmod 3$, and $i>1$. The
main goal of this paper is to construct a $v_2^9$ self map on $S/(3,v_1^8)$ and
show the remaining $\beta$ elements in $\pi_s(S)$
for $s\leq |v_2^9| = 144$ also give rise to infinite families.
\begin{randomstar}{Theorem \ref{thm:beta}}
For all $t\geq 0$, the classes
\begin{align*}
\beta_{9t+3/j}  & \text{ for }j=1,2
\\\beta_{9t+6/j} &\text{ for } j=1,2,3
\\\beta_{9t+9/j} & \text{ for } j=1,\dots,8
\\\alpha_1\beta_{9t+3/3} & 
\\\alpha_1\beta_{9t+7} & 
\end{align*}
are permanent cycles in
the Adams-Novikov spectral sequence for the sphere.
\end{randomstar}

These families are interesting in part because the Hurewicz map $\pi_*(S)\to
\pi_*(\tmf)$ detects $\beta_{9t+1}$, $\alpha_1\beta_{9t+3/3}$,
$\beta_{9t+6/3}$, and $\alpha_1\beta_{9t+7}$, as we show in Theorem
\ref{thm:hurewicz-image}. Together with the well-known behavior in the 0- and
1-lines, this completely determines the Hurewicz image of $\tmf$ at $p=3$.
Behrens, Mahowald, and Quigley \cite{behrens-mahowald-quigley} 
calculate the Hurewicz image of $\tmf$ at $p=2$. Since $\pi_*(\tmf[1/6]) =
\Z[1/6,a_4,a_6]$ is concentrated on the Adams-Novikov 0-line, our work together
with the $p=2$ case forms the complete determination of the Hurewicz image of
$\tmf$ at all primes.

Following the strategy outlined above, much of the work involves showing that
$v_2^9$ is a permanent cycle in the Adams-Novikov spectral sequence computing
$\pi_*(S/(3,v_1^8))$ (Theorem \ref{thm:v2^9}).
All of our explicit calculations are in the Adams-Novikov spectral sequence for
$S/3$. To relate this to $S/(3,v_1^8)$ we 
use a lemma due to the second author (see Lemma
\ref{lem:extensions-diff}) that relates $v_1^m$-extensions in the Adams-Novikov spectral
sequence for $S/3$ to differentials in the Adams-Novikov spectral sequence for
$S/(3,v_1^m)$. 
Combined with Oka's result \cite{oka-ring}
that $S/(3,v_1^m)$ is a ring spectrum for $m\geq 2$, this implies the existence of a
$v_2^9$-self-map.

\begin{randomstar}{Corollary \ref{cor:self-map}}
For $2\leq m\leq 8$, there is a nonzero self-map $v_2^9:\Sigma^{144}S/(3,v_1^m)\to
S/(3,v_1^m)$.
\end{randomstar}
There is also a similar result for $m=9$, but correction terms for $v_2^9$ are
needed; see Remark \ref{rmk:m=9}.

Our proof that $v_2^9$ is a permanent cycle in the Adams-Novikov spectral
sequence computing $\pi_*(S/(3,v_1^8))$ relies on analysis of the 143-stem
in the Adams-Novikov spectral sequence for the sphere. This is greatly aided by software written by Wang
\cite{guozhen-github, guozhen-documentation}, which computes the $E_2$ page of the Adams-Novikov
spectral sequence for the sphere using the algebraic Novikov spectral sequence.
In addition,
the software computes multiplication by $p$, $\alpha_1$, and arbitrary
$\beta_{i/j}$ elements.
Wang's software was originally written for use at $p=2$; the minor modifications
we used to change the prime are available at \cite{guozhen3} and data, charts,
and more documentation are available at \cite{guozhen-data}. The calculations
that make use of computer data occur solely in Section \ref{sec:143}.

We now comment on the overlap between this work and the preprint
\cite{shimomura-beta} by the second author: both works construct $v_2^9$ and the
families $\beta_{9t+3/3}$, $\beta_{9t+6/3}$, and $\beta_{9t+9/8}$, but we
find the methods here to be more straightforward. The earlier preprint uses the
machinery of infinite descent to control the complexity of the Adams-Novikov
spectral sequence, while we opt to work directly with the Adams-Novikov $E_2$
page, controlling the complexity using Wang's program. In particular,
our analysis in Section \ref{sec:143}, which is the crucial input for the
construction of $v_2^9$, follows from the $\beta_1$-multiplication structure
given by the computer calculations, as most of the elements in play are highly
$\beta_1$-divisible.

We conclude this section by giving an outline of the rest of the paper.
In Section \ref{sec:notation} we state notational conventions for the rest of
the paper and write down some easy facts about the
Adams-Novikov spectral sequence that will be used extensively in the remaining
sections. Most of the work for proving Theorem \ref{thm:v2^9} occurs in Section
\ref{sec:143}, which makes use of computer calculations to determine the
Adams-Novikov spectral sequence for $S/3$ near the 143 stem.
Theorem \ref{thm:v2^9}, which constructs $v_2^9$, is proved in Section \ref{sec:v2^9}. In Section \ref{sec:betas} we prove Theorem \ref{thm:beta},
which constructs the promised $\beta$ families. This involves explicit
calculations in a tractable range of stems to prove that $v_1^2v_2^3$,
$v_1v_2^6$, and $\alpha_1v_1v_2^3$ in $E_2(S/(3,v_1^4))$ and $\alpha_1v_1v_2^7$ in
$E_2(S/(3,v_1^2))$ are permanent cycles.
In Section \ref{sec:tmf} we determine the 3-primary Hurewicz image of $\tmf$
(Theorem \ref{thm:hurewicz-image}).

\textbf{Acknowledgements:} The first author would like to thank Paul Goerss for
suggesting this project, and for many helpful conversations along
the way. She would also like to thank Guozhen Wang for explaining some aspects
of his program, and for some code changes.

\section{Notation and preliminaries}\label{sec:notation}
At a fixed prime $p$, the Brown-Peterson spectrum $BP$ has coefficient ring
$BP_* = \Z_{(p)}[v_1,v_2,\dots]$ with $|v_i| = 2p^i-2$ and ring of co-operations
$BP_*BP = BP_*[t_1,t_2,\dots]$ with $|t_i| = 2p^i-2$. Given a finite
$p$-local spectrum $X$, the Adams-Novikov spectral sequence
\begin{equation}\label{eq:ANSS} E_2 = \Ext^{*,*}_{BP_*BP}(BP_*, BP_*X)\implies \pi_*(X)_{(p)}
\end{equation}
converges. Henceforth everything will be implicitly localized at the prime $p=3$. The $E_2$
page of \eqref{eq:ANSS} can be calculated as the cohomology of the normalized cobar complex
\begin{equation}\label{eq:cobar} BP_* \ttoo{\eta_R-\eta_L} \bar{BP_*BP} \tto \bar{BP_*BP}^{\tensor
2}\tto \dots \end{equation}
though this is not an efficient means of computation.
See \cite{goerss-ANSS-notes}, \cite[\S4.3,\S4.4]{green} for further background
on the Adams-Novikov spectral sequence.

Let $E_r^{s,f}(X)$ denote the $E_r$ page of \eqref{eq:ANSS}, restricted to stem $s$ and
Adams-Novikov filtration $f$. We say that an element in $\pi_sX$ is detected in
filtration $f$ if it is represented by a nonzero class in $E_\iy^{s,f}(X)$.
Throughout, any equality of homotopy or $E_2$ page
elements should be understood to be true up to units (that is, up to signs).

We will make frequent use of the cofiber sequence
$$ S\too{3} S \too{i} S/3 \too{j} \Sigma S. $$
We will also consider the cofiber sequences
$$ \Sigma^{4m}S/3\ttoo{v_1^m} S/3 \ttoo{i_m} S/(3,v_1^m)\ttoo{j_m} \Sigma^{4m+1} S/3 $$
for $m\geq 1$, eventually focusing primarily on $m=8$.
Henceforth degree shifts in cofiber and long exact sequences will usually not be
shown.
The maps $i$, $j$, $i_m$, and $j_m$ induce maps of Adams-Novikov spectral
sequences, which we will denote with the same letters.
We note the effect on degrees: given $x\in E_2^{s,f}(S/3)$, we have
$j(x)\in E_2^{s-1,f+1}(S)$; given $x\in E_2^{s,f}(S/(3,v_1^m))$, we have
$j_m(x)\in E_2^{s-4m-1,f+1}(S/3)$.
The maps $i$ and $i_m$ preserve degrees.
Sometimes we omit applications of $i$ or $i_m$ in the notation for brevity; for
example, we write $\beta_1\in E_2^{10,2}(S/3)$ to refer to $i(\beta_1)$.
This is justified by regarding $E_2(S/3)$ as a module over $E_2(S)$.

By the Geometric Boundary Theorem (see \cite[Theorem 2.3.4]{green}), the
maps induced by $j$ and $j_m$ on $E_2$ pages coincide with the boundary
maps in the long exact sequences of Ext groups
\begin{align*}
\Ext^{*,*}_{BP_*BP}(BP_*, BP_*/3) & \to \Ext^{*+1,*}_{BP_*BP}(BP_*, BP_*)
\\\Ext^{*,*}_{BP_*BP}(BP_*, BP_*/(3,v_1^m)) & \to \Ext^{*+1,*}_{BP_*BP}(BP_*,
BP_*/3).
\end{align*}

\begin{definition}
We will say that an element $x\in E_2^{*,*}(S/3)$ is a \emph{bottom cell
element} if it is in the image of $i:E_2^{*,*}(S)\to E_2^{*,*}(S/3)$. An
element $x\in E_2^{*,*}(S/3)$ is a \emph{top cell element} if its image under the
boundary map $j: E_2^{*,*}(S/3)\to E_2^{*-1,*}(S)$ is nonzero.
\end{definition}

\begin{random}{Notation}\label{notation:tilde}
~\begin{enumerate} 
\item If $x\in E_2^{s,f}(S)$ is 3-torsion, we will let $\bar{x}\in E_2^{s+1,f-1}(S/3)$
denote a class such that $j(\bar{x}) = x$. Note that $\bar{x}$
may not always be uniquely determined.
\item If $x\in E_2(X)$ is a permanent cycle converging to $y\in \pi_*(X)$, write
$y = \{ x \}$.
\end{enumerate}
\end{random}

In the rest of this section we present some preliminaries that are important for
working with the Adams-Novikov spectral sequences for $S$, $S/3$, and
$S/(3,v_1^m)$. All of these facts are well-known, and the rest of this section
can be skipped by a knowledgable reader.
First we recall some frequently-encountered
permanent cycles in the 3-primary Adams-Novikov spectral sequence for the
sphere. The comparisons below to the Adams spectral sequence are not needed in the
rest of the paper, and are just presented for those readers who are more
familiar with the Adams elements; a reference for computational facts about the
Adams spectral sequence $E_2$ page is \cite[\S3.4]{green}, and for the
corresponding Adams-Novikov elements is \cite{MRW} or
\cite[\S6]{goerss-ANSS-notes} for the Greek letter construction and \cite[Theorem 4.4.20]{green} for low stems.
\begin{itemize} 
\item $\alpha_1\in E_2^{3,1}(S)$ is represented by $[t_1]$ in the cobar complex
\eqref{eq:cobar}, and is called $h_0$ in the Adams spectral sequence.
\item $\beta_1\in E_2^{10,2}(S)$ equals the Massey product
$\an{\alpha_1,\alpha_1,\alpha_1}$ and is called $b_0=b_{10}$ in the Adams spectral
sequence.
\item $\beta_2\in E_2^{26,2}(S)$ is called $k=k_0=\an{h_0,h_1,h_1}$ in the Adams
spectral sequence. (This does not correspond to an Adams-Novikov Massey
product since $h_1$ does not exist in the Adams-Novikov $E_2$ page.)
\end{itemize}
The 0-line (generated by just $1\in E_2^{*,0}(S)$) and the 1-line $E_2^{*,1}(S)$
consist of the image of the $J$ homomorphism. These classes are all permanent
cycles; the image of the 1-line under $i$ is $\alpha_1v_1^m$ for $m\geq 0$.
The 0-line of $E_2(S/3)$ is the polynomial algebra on $v_1$.

\begin{fact}\label{fact:sparseness}
Let $X = S$, $S/3$, or $S/(3,v_1^m)$ for $m\geq 1$.
\begin{enumerate} 
\item $E_2^{s,f}(X)=0$ if $s+f\not\equiv 0\pmod 4$;
\item $E_2^{s,f}(X) = E_5^{s,f}(X)$.
\end{enumerate}
\end{fact}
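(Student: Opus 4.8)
The plan is to deduce both parts from a single observation: for each of the three spectra $X$, the $BP_*BP$-comodule $BP_*X$ is concentrated in internal degrees divisible by $4$. To set this up I would first record the degree count at $p=3$: the polynomial generators satisfy $|v_i| = |t_i| = 2\cdot 3^i - 2 = 2(3^i-1)$, which is divisible by $4$ since $3^i-1$ is even. Hence $BP_* = \Z_{(3)}[v_1,v_2,\dots]$, every monomial in the $t_i$, and therefore each $BP_*$-module $\bar{BP_*BP}^{\tensor f}$, is concentrated in internal degrees $\equiv 0 \pmod 4$.

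Next I would identify $BP_*X$ in the three cases. For $X=S$ it is $BP_*$ itself. For $X=S/3$, the long exact sequence in $BP$-homology for $S\ttoo{3}S\to S/3$, together with injectivity of multiplication by $3$ on the torsion-free ring $BP_*$, gives $BP_*(S/3)\cong BP_*/3$, still concentrated in degrees $\equiv 0\pmod 4$. For $X=S/(3,v_1^m)$, the cofiber sequence $\Sigma^{4m}S/3\ttoo{v_1^m}S/3\to S/(3,v_1^m)$ and the fact that $v_1$ is a non-zero-divisor in $BP_*/3$ give $BP_*(S/(3,v_1^m))\cong BP_*/(3,v_1^m)$; as $4m\equiv 0\pmod 4$, this too is concentrated in degrees divisible by $4$.

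Part (1) is then immediate: the normalized cobar complex computing $E_2^{*,*}(X)$ has $\bar{BP_*BP}^{\tensor f}\tensor_{BP_*}BP_*X$ in cohomological degree $f$, which by the preceding paragraphs lies entirely in internal degrees $t\equiv 0\pmod 4$; since a class in stem $s$ and Adams--Novikov filtration $f$ has internal degree $t=s+f$, nonvanishing of $E_2^{s,f}(X)$ forces $s+f\equiv 0\pmod 4$. Part (2) is a formal consequence: each $E_r$ page is a subquotient of $E_2$, hence also vanishes outside $s+f\equiv 0\pmod 4$, and for $d_r\colon E_r^{s,f}(X)\to E_r^{s-1,f+r}(X)$ to be nonzero one needs both $s+f\equiv 0$ and $(s-1)+(f+r)\equiv 0\pmod 4$, which forces $r\equiv 1\pmod 4$; thus $d_2=d_3=d_4=0$ and $E_2 = E_5$. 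I expect no genuine obstacle here --- the argument is entirely formal --- with the only point needing a little care being the identification of $BP_*(S/(3,v_1^m))$ and the degree-shift bookkeeping in that last case.
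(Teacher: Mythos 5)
Your proof is correct. The route differs mildly from the paper's: the paper cites Ravenel's Proposition 4.4.2 for the sphere case and then transfers sparseness to $S/3$ and $S/(3,v_1^m)$ by inspecting the long exact sequences in $\Ext$ associated to $BP_*\to BP_*\to BP_*/3$ and $BP_*/3\to BP_*/3\to BP_*/(3,v_1^m)$ (noting that the connecting maps preserve internal degree and that $|v_1^m|=4m\equiv 0\pmod 4$), whereas you re-prove everything uniformly from the cobar complex, observing that $|v_i|=|t_i|=2(3^i-1)\equiv 0\pmod 4$ and that $BP_*X$ is $BP_*$, $BP_*/3$, or $BP_*/(3,v_1^m)$ in the three cases, all concentrated in internal degrees divisible by $4$. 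Both arguments rest on the same degree count at $p=3$; yours is more self-contained (it does not invoke the sphere case as a black box), while the paper's is shorter and avoids re-identifying $BP_*X$ and the cobar bookkeeping by leaning on the reference and on exactness. Your deduction of part (2) — that a nonzero $d_r\colon E_r^{s,f}\to E_r^{s-1,f+r}$ forces $r\equiv 1\pmod 4$, so $d_2=d_3=d_4=0$ and $E_2=E_5$ — is exactly the intended formal consequence.
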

\begin{proof}
See \cite[Proposition 4.4.2]{green} for the statement about $X=S$.
This sparseness for the sphere also implies the first statement for $X=S/3$ and
$S/(3,v_1^m)$, as can be seen by looking at the degrees of the long exact
sequences in $\Ext$ groups corresponding to the short exact sequences
$BP_*\too{3} BP_*\to BP_*/3$ and $BP_*/3\too{v_1^m} BP_*/3\to BP_*/(3,v_1^m)$.
The second statement follows from the first.
\end{proof}

Most of our calculations in the Adams-Novikov spectral sequence for
$S/(3,v_1^m)$ for $m\geq 2$ implicitly use the following fact.
\begin{theorem}[\cite{oka-ring}] \label{thm:oka-ring}
For $m\geq 2$, $S/(3,v_1^m)$ is a ring spectrum.
\end{theorem}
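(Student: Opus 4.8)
The plan is to construct the multiplication by obstruction theory over the mod-$3$ Moore spectrum, using that $S/(3,v_1^m)$ is the cofiber of a self-map of $M:=S/3$ and that $M$ already carries a useful multiplicative structure. First I would recall the classical input: at $p=3$ the Moore spectrum $M=S/3$ admits a homotopy-commutative multiplication $\mu_M\colon M\wedge M\to M$ with unit $i\colon S\to M$ (Toda), and Adams' self-map may be taken to be multiplication by the generator $v_1\in\pi_4(M)\cong\Z/3$ (the class detected by $\bar\alpha_1$); homotopy-commutativity makes $v_1$ a central element. Hence $N:=S/(3,v_1^m)$ is the cofiber of multiplication by $v_1^m$, and $\mu_M$ extends over $M\wedge N$ — this is unobstructed because $\mu_M\circ(1_M\wedge v_1^m)$ is homotopic, up to a transposition, to $\mu_M\circ(v_1^m\wedge 1_M)$ and hence becomes null after $i_m\colon M\to N$ — yielding a unit $S\xrightarrow{i}M\xrightarrow{i_m}N$ and a pairing $\lambda\colon M\wedge N\to N$ refining $\mu_M$. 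What remains is to promote $\lambda$ to a genuine unital, homotopy-associative multiplication on $N$ itself.

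The obstruction theory runs as follows. Smashing the cofiber sequence $\Sigma^{4m}M\xrightarrow{v_1^m}M\xrightarrow{i_m}N\xrightarrow{j_m}\Sigma^{4m+1}M$ with $N$ gives
$$\Sigma^{4m}(M\wedge N)\xrightarrow{\,v_1^m\wedge 1\,}M\wedge N\xrightarrow{\,i_m\wedge 1\,}N\wedge N\xrightarrow{\,j_m\wedge 1\,}\Sigma^{4m+1}(M\wedge N).$$
The composite $\lambda\circ(v_1^m\wedge 1)$ is multiplication by $v_1^m$ on $N$, which is null because $v_1^m$ acts trivially on $N$ by construction; a choice of nullhomotopy then produces $\mu\colon N\wedge N\to N$ with $\mu\circ(i_m\wedge 1)\simeq\lambda$, and any such $\mu$ is automatically left-unital. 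What is left is to arrange right-unitality and homotopy-associativity; the relevant obstructions, together with the indeterminacy in $\mu$ (a coset of $[\Sigma^{4m+1}(M\wedge N),N]$), are concentrated in a few homotopy groups of $N$: for unitality, $\pi_{4m}(N)\cong\pi_{4m}(M)/(v_1^m)$ and $\pi_{4m+1}(N)\cong\pi_{4m+1}(M)$, and for associativity the analogous groups near degree $8m$ — which vanish for dimension reasons once $m\geq 1$, since $N$ has top cell in degree $4m+1$ — together with a secondary contribution again near degree $4m$.

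The heart of the matter, and where the hypothesis $m\geq 2$ is essential, is to show these obstruction classes vanish (or lie in the allowed indeterminacy). Here I would compute $\pi_*(M)$ and $\pi_*(N)$ in the narrow band of degrees $4m,4m+1$ (and a few more for the associativity obstruction) using the $v_1$-Bockstein spectral sequence converging from the known low-dimensional $3$-primary stable stems; the mechanism is that an obstruction class living in the $v_1$-divisible part of $\pi_{4m}(M)$, or in $\pi_{4m+1}(M)$, carries for $m\geq 2$ at least one more power of $v_1$ than it does for $m=1$, and is therefore already killed in $\pi_{4m}(M)/(v_1^m)$, whereas for $m=1$ the analogous group need not be zero. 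I expect this degree-count and $v_1$-divisibility bookkeeping to be the main obstacle: one must pin down the obstruction classes precisely enough to see the extra divisibility, and the small cases $m=2,3$ are the tightest, since there the room in the relevant stems is smallest. A less hands-on alternative would be to invoke Oka's general obstruction-theoretic machinery for $A_n$-structures on generalized Smith--Toda complexes \cite{oka-ring} directly; since we use this result only as a black box, it is enough to cite it.
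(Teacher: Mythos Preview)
The paper gives no proof of this theorem: it is stated with the attribution \cite{oka-ring} and used as a black box, exactly as your final sentence suggests. So there is nothing in the paper to compare your argument against beyond the bare citation.

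Your outline is a reasonable sketch of Oka's strategy---build a pairing $M\wedge N\to N$ from the ring structure on $M=S/3$, then extend over $N\wedge N$ using the cofiber sequence and analyze the obstructions to unitality and associativity. But as written it is not a proof. The assertion that the associativity obstructions ``vanish for dimension reasons once $m\geq 1$, since $N$ has top cell in degree $4m+1$'' is not right: the associator is a map $N^{\wedge 3}\to N$, and the relevant cells go up to degree $3(4m+1)$, so there is no blanket dimensional vanishing. More importantly, the case $m=1$ genuinely fails (the Smith--Toda complex $V(1)=S/(3,v_1)$ is \emph{not} a ring spectrum at $p=3$), so any correct argument must isolate exactly which obstruction class is nonzero for $m=1$ and explain why the extra power of $v_1$ kills it for $m\geq 2$; your ``$v_1$-divisibility bookkeeping'' gestures at this but does not pin it down. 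If you want to include an argument rather than a citation, you would need to actually carry out Oka's computation in \cite{oka-ring}; otherwise, the citation alone is what the paper does and is sufficient for its purposes.
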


It is also well-known that $S/3$ is a ring spectrum.

\begin{lemma}\label{lem:b6}
~\begin{enumerate} 
\item \label{item:b6} If $x\in E_{10}(S)$, then $\beta_1^6x$ is zero in $E_{10}(S)$. If $x\in E_{10}(S/3)$, then $\beta_1^6x$ is zero in $E_{10}(S/3)$.
\item \label{item:ab3} If $x\in E_6(S)$, then $\alpha_1\beta_1^3x$ is zero in
$E_6(S)$. If $x\in E_6(S/3)$, then $\alpha_1\beta_1^3x$ is zero in $E_6(S/3)$.
\item \label{item:v1^2beta} We have $v_1^2\cdot \beta_1 = 0$ in $E_2(S/3)$.
\end{enumerate}
\end{lemma}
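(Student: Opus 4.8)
The plan is to read all three parts as statements that a permanent cycle becomes a boundary on the indicated page, and to deduce the $S/3$ assertions from the $S$ assertions using that $i$ is a multiplicative map of spectral sequences commuting with every $d_r$: if $\beta_1^6 = d_9(w)$ in $E_9(S)$ then $\beta_1^6 = i(\beta_1)^6 = d_9(i(w))$ in $E_9(S/3)$, and likewise for $\alpha_1\beta_1^3$. Because $\alpha_1$ and $\beta_1$ are permanent cycles, so are $\beta_1^6$ and $\alpha_1\beta_1^3$; hence it suffices to prove $\beta_1^6 = 0$ in $E_{10}(S)$ and $\alpha_1\beta_1^3 = 0$ in $E_6(S)$, after which the vanishing of $\beta_1^6 x$ and $\alpha_1\beta_1^3 x$ for arbitrary $x$ follows by multiplicativity of the $E_r$ pages.

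Part~\itemref{item:ab3} is immediate from the classical Toda differential $d_5(\beta_{3/3}) = \alpha_1\beta_1^3$ in the Adams--Novikov spectral sequence for $S$ (see \cite{green}), which exhibits $\alpha_1\beta_1^3$ as a $d_5$-boundary and so kills it on $E_6(S)$; the reduction above then gives both the $S$ and the $S/3$ statements.

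For part~\itemref{item:b6} the essential point is the differential that kills $\beta_1^6 \in E_2^{60,12}(S)$: being a permanent cycle, $\beta_1^6$ can vanish by $E_{10}$ only by being hit by some $d_r$ with $r\leq 9$, and a bidegree count points to a $d_9$ with source in $E_9^{61,3}(S)$. Extracting this differential from the known $3$-primary Adams--Novikov computation near stem $60$ (from \cite{green} or from Wang's data) and confirming that it lands on $\beta_1^6$ is the step I expect to be the main obstacle, in contrast with the classical $d_5$ that handles part~\itemref{item:ab3}.

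For part~\itemref{item:v1^2beta} I would argue by degrees using the bottom-cell/top-cell exact sequence. Writing $v_1^2\beta_1 = i(\beta_1)\cdot v_1^2 \in E_2^{18,2}(S/3)$ and applying $j$, the $E_2(S)$-linearity of $j$ together with the $v_1$-Bockstein computation $j(v_1^2) = \partial(v_1^2) = \alpha_2$ gives $j(v_1^2\beta_1) = \alpha_2\beta_1 \in E_2^{17,3}(S)$. One checks from the known $E_2$-page that $\alpha_2\beta_1 = 0$ (consistent with $\pi_{17}(S)_{(3)} = 0$), so $v_1^2\beta_1$ is a bottom-cell element; and $E_2^{18,2}(S) = 0$, since by \cite{MRW} the $2$-line is spanned by the classes $\beta_{i/j,k}$, none of which lies in stem $18$. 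Exactness of $0\to E_2^{18,2}(S)\to E_2^{18,2}(S/3)\to E_2^{17,3}(S)\to 0$ then forces $v_1^2\beta_1 = 0$.
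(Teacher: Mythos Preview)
Your strategy for \itemref{item:b6} and \itemref{item:ab3} is exactly the paper's: exhibit $\beta_1^6$ (resp.\ $\alpha_1\beta_1^3$) as a boundary in the spectral sequence for $S$, then push forward along $i$. For \itemref{item:ab3} you and the paper both invoke the Toda differential $d_5(\beta_{3/3}) = \alpha_1\beta_1^3$. For \itemref{item:b6} you correctly locate the needed source in bidegree $(61,3)$ but leave the identification of the differential as an ``obstacle''; the paper closes this simply by citing the classical differential $d_9(\alpha_1\beta_4) = \beta_1^6$, forced by \cite[Table A3.4]{green}. So your acknowledged gap is filled by a single citation rather than a new idea.

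For \itemref{item:v1^2beta} the paper merely cites \cite{shimomura-beta-tpr}, whereas you supply an explicit long-exact-sequence argument. It is essentially correct: $E_2^{18,2}(S) = 0$ by the MRW description of the $2$-line, so it suffices that $j(v_1^2\beta_1) = \beta_1\cdot j(v_1^2) \in E_2^{17,3}(S)$ vanish. One caution: your parenthetical ``consistent with $\pi_{17}(S)_{(3)} = 0$'' does not by itself justify $\alpha_2\beta_1 = 0$ on $E_2$, since a nonzero $E_2$ class in $(17,3)$ could in principle support an outgoing differential and still be compatible with trivial homotopy. You should instead cite the low-dimensional $E_2$ computation directly (e.g.\ \cite[Figure 1.2.19]{green} gives $E_2^{17,3}(S) = 0$), after which exactness immediately yields $E_2^{18,2}(S/3) = 0$ and in particular $v_1^2\beta_1 = 0$. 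With that fix your argument for \itemref{item:v1^2beta} is complete and more self-contained than the paper's bare citation.
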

\begin{proof}
For \eqref{item:b6}, the classical differential $d_9(\alpha_1\beta_4) = \beta_1^6$
(see Table \ref{tab:anss})
implies that $\beta_1^6=0$ in $E_{10}(S)$, and hence
$\beta_1^6 = 0$ in $E_{10}(S/3)$. Part \eqref{item:ab3} is an analogous consequence of
the Toda differential $d_5(\beta_{3/3}) = \alpha_1\beta_1^3$.
Part \eqref{item:v1^2beta} is \cite[Lemma 2.13]{shimomura-beta-tpr}.
\end{proof}

Next, we record some basic facts about transferring differentials by naturality
across various Adams-Novikov spectral sequences.

\begin{lemma}\label{lem:d5}
Let $m\geq 1$.
\begin{enumerate} 
\item \label{item:i-transfer} If there is a nontrivial differential $d_5(x)=y$ in $E_5(S)$ where $y$
is not 3-divisible, then there is a nontrivial differential
$d_5(i(x))=i(y)$ in $E_5(S/3)$.
\item \label{item:delta-transfer} If there is a nontrivial differential $d_5(j(x)) = j(y)$ in
$E_5(S)$ for $x,y\in E_5(S/3)$, then $d_5(x)\neq 0$, and $d_5(x)\equiv y\pmod
{\Im(i)}$.
\end{enumerate}
\end{lemma}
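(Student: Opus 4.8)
The plan is to deduce both parts from naturality of the Adams-Novikov spectral sequence applied to the maps $i\colon S\to S/3$ and $j\colon S/3\to S$ arising from the cofiber sequence $S\xrightarrow{3}S\xrightarrow{i}S/3\xrightarrow{j}\Sigma S$, together with the long exact sequence in $E_2$ and the sparseness facts of Fact \ref{fact:sparseness}. The key external inputs are that $i$ and $j$ are maps of spectral sequences (so they commute with all $d_r$), that the composite $j\circ i$ is zero and $i\circ(\text{top cell inclusion})$-type composites fit into the long exact sequence $\cdots\to E_2(S)\xrightarrow{3}E_2(S)\xrightarrow{i}E_2(S/3)\xrightarrow{j}E_2(S)\xrightarrow{3}\cdots$, and that by Fact \ref{fact:sparseness}(2) we have $E_5=E_2$ for all three spectra, so "$d_5$ on $E_5$" is literally the first possibly-nonzero differential.

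For part \eqref{item:i-transfer}: apply $i$ to the differential $d_5(x)=y$. By naturality, $d_5(i(x))=i(y)$ in $E_5(S/3)$. So the only thing to check is that $i(y)\neq 0$; but $\ker(i\colon E_5^{*,*}(S)\to E_5^{*,*}(S/3))$ is the image of multiplication by $3$ on $E_2(S)=E_5(S)$ (exactness of the long exact sequence, using that no differentials have hit this bidegree since $E_5=E_2$), which is exactly the set of $3$-divisible classes. Since $y$ is assumed not $3$-divisible, $i(y)\neq 0$, so the differential $d_5(i(x))=i(y)$ is nontrivial.

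For part \eqref{item:delta-transfer}: suppose $d_5(j(x))=j(y)$ is nontrivial in $E_5(S)$, with $x,y\in E_5(S/3)$. Consider $d_5(x)\in E_5(S/3)$. By naturality of $j$, we get $j(d_5(x))=d_5(j(x))=j(y)\neq 0$; in particular $d_5(x)\neq 0$. Moreover $j(d_5(x)-y)=0$, so $d_5(x)-y$ lies in $\ker(j)=\Im(i)$ by exactness of the long exact sequence (again $E_5=E_2$, so no prior differentials interfere), i.e. $d_5(x)\equiv y\pmod{\Im(i)}$, as claimed. The main point requiring care is the identification of $\ker(j)$ on $E_5$ with $\ker(j)$ on $E_2=\Im(i)$ on $E_2$: this is where one uses that $E_2(S/3)=E_5(S/3)$ and $E_2(S)=E_5(S)$ so that the $E_2$-level long exact sequence of Ext groups (which is exact, being the long exact sequence of the short exact sequence $BP_*\xrightarrow{3}BP_*\to BP_*/3$) is the same as the relation between the $E_5$ pages. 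I expect this bookkeeping — making sure that no intervening differential has altered the relevant bidegrees so that the $E_2$-level exactness can be invoked on $E_5$ — to be the only subtle point, and it is handled entirely by Fact \ref{fact:sparseness}(2).
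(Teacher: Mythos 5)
Your proposal is correct and follows essentially the same route as the paper's proof: naturality of $i$ and $j$ as maps of spectral sequences, the identification $E_2=E_5$ from Fact \ref{fact:sparseness}, and the long exact sequence of $E_2$ pages to identify $\ker(i)$ with the $3$-divisible classes and $\ker(j)$ with $\Im(i)$. The only difference is that you spell out the $\ker(i)=\Im(3)$ step a bit more explicitly than the paper does, which is harmless.
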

\begin{proof}
For \eqref{item:i-transfer}, by naturality of $i$, there is a differential $d_5(i(x))=i(y)$. We just need to
check that $i(y)$ is nonzero in $E_5(S/3)$. This follows from the fact that
$E_2(S/3) = E_5(S/3)$ and the assumption that $i(y)$ is nonzero in $E_2(S/3)$.
For \eqref{item:delta-transfer}, since $j$ commutes with the differential and
$E_2(S/3) = E_5(S/3)$, we have that $d_5(x)\equiv y\pmod {\ker(j)}$. The long exact sequence
$$\dots\too{3} E_2(S)\too{i} E_2(S/3) \too{j} E_2(S)\too{3}\dots $$
implies that $\ker(j) = \Im(i)$.
\end{proof}

We use the following lemma without further mention when working with $\beta$
elements, applying it to the case $x = v_2^i$.
\begin{lemma}\label{lem:delta-switch}
Let $m\geq 1$. For any $x\in E_2(S/(3,v_1^m))$ we have $j_m(x)=j_{m+k}(v_1^kx)$.
\end{lemma}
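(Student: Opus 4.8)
The plan is to realize both $j_m$ and $j_{m+k}$ as connecting homomorphisms in long exact sequences of $\Ext$ groups, and then to apply naturality of the connecting homomorphism to a suitable map of short exact sequences of $BP_*BP$-comodules. By the Geometric Boundary Theorem (as recorded earlier), $j_m$ coincides with the connecting homomorphism $\partial_m$ in the long exact sequence in $\Ext_{BP_*BP}(BP_*,-)$ attached to the short exact sequence
$$0\to \Sigma^{4m}BP_*/3\too{v_1^m} BP_*/3\too{\pi_m} BP_*/(3,v_1^m)\to 0,$$
and likewise $j_{m+k}$ coincides with $\partial_{m+k}$ for the analogous sequence with $m$ replaced by $m+k$. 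The class we denote $v_1^k x$ is the image of $x$ under the comodule map $BP_*/(3,v_1^m)\to \Sigma^{-4k}BP_*/(3,v_1^{m+k})$ given by multiplication by $v_1^k$; I would first record that this map is well defined, since $a\equiv a'\pmod{v_1^m}$ forces $v_1^k a\equiv v_1^k a'\pmod{v_1^{m+k}}$.

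Next I would assemble the map of short exact sequences. After suspending the first sequence by $\Sigma^{4k}$ so that the quotient terms sit in the same internal degree, I take the identity on the sub-objects $\Sigma^{4m+4k}BP_*/3$, multiplication by $v_1^k$ on the middle terms $\Sigma^{4k}BP_*/3\to BP_*/3$, and the $v_1^k$-multiplication map above on the quotient terms. Both squares commute on the nose (the left square commutes because $v_1^k\circ v_1^m=v_1^{m+k}$, the right because $\pi_{m+k}(v_1^k a)=\overline{v_1^k a}$). Naturality of the connecting homomorphism then gives $\partial_{m+k}(v_1^k x)=\partial_m(x)$, since the map induced on the sub-objects is the identity; passing back through the Geometric Boundary Theorem yields $j_{m+k}(v_1^k x)=j_m(x)$.

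I do not expect any genuine obstacle here: the content is a routine naturality argument, and the only point requiring care is the bookkeeping of internal gradings and suspensions, together with the (elementary) well-definedness of the $v_1^k$-multiplication map on comodules. For readers who prefer the topology, the same argument can be run geometrically via the map of cofiber sequences induced by a lift of $v_1^k$ to a map $\Sigma^{4k}S/(3,v_1^m)\to S/(3,v_1^{m+k})$ compatible with the Adams $v_1$ self-maps, but the algebraic version is cleaner and suffices, since we only need the identity on $E_2$ pages.
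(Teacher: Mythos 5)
Your proof is correct and follows essentially the same route as the paper: the paper also takes the map of short exact sequences $BP_*/3\to BP_*/3\to BP_*/(3,v_1^m)$ over $BP_*/3\to BP_*/3\to BP_*/(3,v_1^{m+k})$ with identity on the sub-object and multiplication by $v_1^k$ on the middle and quotient terms, and concludes by naturality of the connecting homomorphisms (identified with $j_m$, $j_{m+k}$ via the Geometric Boundary Theorem). Your extra bookkeeping of suspensions and the well-definedness of the $v_1^k$-multiplication map is fine but adds nothing beyond what the paper leaves implicit.
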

\begin{proof}
The map of short exact sequences
$$ \xymatrix@C=30pt{
BP_*/3\ar[d]_=\ar[r]^-{v_1^m} & BP_*/3\ar[r]\ar[d]^-{v_1^k} & BP_*/(3,v_1^m)\ar[d]^-{v_1^k}
\\BP_*/3\ar[r]^-{v_1^{m+k}} & BP_*/3\ar[r] & BP_*/(3,v_1^{m+k})
}$$
induces a map of long exact sequences after applying $\Ext_{BP_*BP}(BP_*, -)$.
In particular, we have a commutative diagram as follows.
\begin{align*}
 & \begin{gathered}[b]
\xymatrix@C=35pt{
\overbracket[0.5pt]{\Ext_{BP_*BP}(BP_*, BP_*/(3,v_1^m))}^{E_2(S/(3,v_1^m))}\ar[r]^-{j_m}\ar[d]^-{v_1^k}
& \overbracket[0.5pt]{\Ext_{BP_*BP}(BP_*, BP_*/3)}^{E_2(S/3)}\ar[d]^=
\\\Ext_{BP_*BP}(BP_*, BP_*/(3,v_1^{m+k}))\ar[r]^-{j_{m+k}} & \Ext_{BP_*BP}(BP_*,
BP_*/3)
}\\[-\dp\strutbox]
\end{gathered}\qedhere
\end{align*}
\end{proof}

\section{Computer-assisted calculations in the 143-stem}\label{sec:143}
In this section we study the Adams-Novikov spectral sequence for $S/3$ in the
143-stem and nearby stems; this is the main technical input needed for Theorem
\ref{thm:v2^9}. We make use of computer calculations of the Adams-Novikov $E_2$
page for the sphere; the specific facts from the computer data we use are given in Lemma \ref{lem:computer}.
The results from this section that are used later are
Lemma \ref{lem:143-v1div} and Proposition \ref{prop:143-only-5}. The former
follows immediately from the $\F_3$-vector space structure
of $E_2^{*,*}(S)$. The rest of the section is devoted to proving the latter,
which says that every permanent cycle in $\pi_{143}(S/3)$ is detected in
filtration $\leq 5$.
This requires more careful analysis using the multiplicative structure of the
$E_2$ page. Lemmas \ref{lem:141,15} and \ref{lem:142,14} give the differentials
responsible for killing higher filtration elements in $E_2^{143,*}(S/3)$.

We encourage the reader to refer to the Adams-Novikov chart in
\cite{guozhen-data} while reading this section. Table \ref{tab:anss} is a
summary of this data:
all of the differentials in the chart in \cite{guozhen-data} are derived from $\alpha_1$,
$\beta_1$, and $\beta_2$-multiples of the classes in Table \ref{tab:anss}.
Here $x_{57}$ is the generator of $E_2^{57,3}(S)$, $x_{75}$ is the generator of
$E_2^{75,5}(S)$, and $x_{96}$ is the generator of $E_2^{96,4}(S)$.
Moreover, the differentials are complete through stem 108.

\renewcommand{\figurename}{Table}
\begin{figure}[H]
\begin{tabular}{c|c|c|c|c|c}
source $s$ & source $f$ & source & $d_r$ & target & reason
\\\hline\hline 34 & 2 & $\beta_{3/3}$ & $d_5$ & $\alpha_1\beta_1^3$ & Toda differential
\\\hline 57 & 3 & $x_{57}$ & $d_5$ & $\beta_1^3\beta_2$ & forced by \cite[Table A3.4]{green}
\\\hline 58 & 2 & $\beta_4$ & $d_5$ & $\alpha_1\beta_1^2\beta_{3/3}$ & forced by \cite[Table A3.4]{green}
\\\hline 61 & 6 & $\alpha_1\beta_4$ & $d_9$ & $\beta_1^6$ & forced by \cite[Table A3.4]{green}
\\\hline 89 & 3 & nonzero class & $d_5$ & nonzero class &  forced by \cite[Table A3.4]{green}
\\\hline 96 & 4 & $x_{96}$ & $d_5$ & $\beta_1^2x_{75}$ & See Lemma \ref{lem:99,5}
\end{tabular}
\caption{Some classical Adams-Novikov differentials}
\label{tab:anss}
\end{figure}

\begin{lemma}[\cite{guozhen-data,guozhen3}]\label{lem:computer}
~\begin{enumerate} 
\item \label{item:81} $\dim(E_2^{81,3}(S)) = 2$, $E_2^{81,7}(S) =
\F_3\{\alpha_1\beta_1^2\beta_4\}$, and $\dim(E_2^{81,f}(S))=0$ if $f\neq 3,7$.
\item \label{item:96} $E_2^{95,9}(S)=\F_3\{\beta_1^2 x_{75}\}$ where $x_{75}$ is the generator of
$E_2^{75,5}(S)$ and $\alpha_1\beta_1^2x_{75}\neq 0$. The only other generator in
$E_2^{95,\geq 9}(S)$ is $\alpha_1\beta_1^4\beta_2^2\in E_2^{95,13}(S)$.
\item \label{item:99} $\dim(E_2^{99,5}(S)) = 2$, $\dim(\alpha_1E_2^{99,5})=1$,
and one of the generators of $E_2^{99,5}(S)$ is $\alpha_1x_{96}$ where $x_{96}$ is the generator of $E_2^{96,4}(S)$. Moreover,
$E_2^{99,17}(S) = \F_3\{ \alpha_1\beta_1^7\beta_2 \}$ and $E_2^{99,f}(S)=0$ for
$f\neq 5,17$.
\item\label{item:for-v1^2-div} $E_2^{135,5}(S)=0 = E_2^{134,6}(S)$.
\item \label{item:141} $E_2^{141,15}(S) = \beta_1^6 E_2^{81,3}(S)$ and this
group has dimension 2.
\item \label{item:chart} Figure \ref{fig:143-S} displays the vector space structure of
$E_2^{s,f}(S)$ for $140\leq s\leq 144$, as well as selected multiplicative
structure.
\end{enumerate}
\end{lemma}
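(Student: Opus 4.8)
Every item of Lemma \ref{lem:computer} is an assertion about a single bigraded piece of $E_2^{*,*}(S)$, or about the $\alpha_1$- or $\beta_1$-module structure on such a piece, in stems $s \le 144$ --- a range in which Wang's program produces complete output. So the plan is to extract the stated data from \cite{guozhen-data,guozhen3} and to certify that the program's computation is trustworthy in this range, rather than to prove anything by a hand computation in the cobar complex \eqref{eq:cobar} (which, as already noted, is infeasible). Concretely: the program computes $E_2^{*,*}(S) = \Ext_{BP_*BP}(BP_*,BP_*)$ by running the algebraic Novikov spectral sequence from a minimal resolution, and it also computes multiplication by $3$, $\alpha_1$, and each $\beta_{i/j}$. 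Before quoting it I would run three consistency checks: against the exhaustive computation of $\pi_* S$ in \cite[\S4.4, Table A3.4]{green} through stem $108$; against the sparseness constraint of Fact \ref{fact:sparseness} ($E_2^{s,f}(S)=0$ unless $s+f\equiv 0 \pmod 4$); and against the $\beta_1$- and $\beta_2$-tower structure forced by the classical differentials recorded in Table \ref{tab:anss}.

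With the output in hand, items (1), (3), (4), and the dimension count in (5) are read straight off the bigraded dimension table: stem $81$ is concentrated in filtrations $3$ and $7$ with $\dim E_2^{81,3}(S)=2$; stem $99$ is concentrated in filtrations $5$ and $17$ with $\dim E_2^{99,5}(S)=2$; $E_2^{135,5}(S)=E_2^{134,6}(S)=0$; and $\dim E_2^{141,15}(S)=2$. The remaining assertions are module-structure statements and come from the program's multiplication routines: that $E_2^{81,7}(S)$ is spanned by $\alpha_1\beta_1^2\beta_4$; that $E_2^{95,9}(S)=\F_3\{\beta_1^2 x_{75}\}$ with $\alpha_1\cdot\beta_1^2 x_{75}\ne 0$ and $\alpha_1\beta_1^4\beta_2^2\in E_2^{95,13}(S)$ the only further generator in $E_2^{95,\ge 9}(S)$; that $\alpha_1 x_{96}$ is one of the two generators of $E_2^{99,5}(S)$ and $\dim\alpha_1 E_2^{99,5}(S)=1$; and that $\beta_1^6\colon E_2^{81,3}(S)\to E_2^{141,15}(S)$ is injective, hence --- by the equality of dimensions just noted --- an isomorphism, which is item (5). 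Item (6) is simply the assertion that Figure \ref{fig:143-S} is a faithful transcription of the program's output, dimensions and indicated $\alpha_1$- and $\beta_1$-actions alike, for $140\le s\le 144$.

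The only genuine obstacle is the meta-step of certifying the computation --- in particular that the algebraic Novikov differentials have all been correctly resolved in this range, since that is what pins down $E_2^{*,*}(S)$ from purely algebraic input. I do not expect any new mathematical argument to be needed here: the three consistency checks above, together with the fact that the same software reproduces the heavily cross-validated $2$-primary Adams-Novikov charts when run at $p=2$, are what I would offer as the verification, and I would not anticipate having to redo any of these computations by hand.
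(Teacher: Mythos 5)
Your proposal matches the paper's treatment: Lemma \ref{lem:computer} carries no hand proof in the paper and is justified entirely by citation to Wang's program output \cite{guozhen-data,guozhen3}, exactly as you do by reading the dimensions and $\alpha_1$-, $\beta_1$-module structure off the computed charts. The consistency checks you add (against \cite[Table A3.4]{green}, sparseness, and Table \ref{tab:anss}) are reasonable extra diligence but not a different argument.
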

In Figure \ref{fig:143-S}, the names in $E_2^{141,15}(S)$ follow from the proof
of Lemma \ref{lem:pi81}; other names are multiplications computed using Wang's
program.

\begin{lemma}\label{lem:143-v1div}
If $x\in E_2^{143,5}(S/3)$ is $v_1^2$-divisible, then $x=0$.
\end{lemma}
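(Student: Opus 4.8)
The plan is to reduce this to a statement about $E_2(S)$ via the long exact sequence in $\Ext$ induced by $BP_* \xrightarrow{3} BP_* \to BP_*/3$. Suppose $x \in E_2^{143,5}(S/3)$ is $v_1^2$-divisible, so $x = v_1^2 x'$ for some $x' \in E_2^{135,5}(S/3)$. (Here I am using that multiplication by $v_1$ has degree $(4,0)$ on $E_2(S/3)$, since $|v_1| = 4$ and $v_1$ lies in filtration $0$; so $v_1^2$ shifts $(135,5) \mapsto (143,5)$.) First I would split into bottom-cell and top-cell contributions: from the long exact sequence $\cdots \xrightarrow{3} E_2(S) \xrightarrow{i} E_2(S/3) \xrightarrow{j} E_2(S) \xrightarrow{3} \cdots$, it suffices to show both that $x \in \ker(j)$ forces $x \in \Im(i)$ and that the relevant bottom-cell group vanishes, or more directly to show $x' = 0$ in $E_2^{135,5}(S/3)$.

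The key input is Lemma \ref{lem:computer}\itemref{item:for-v1^2-div}, which gives $E_2^{135,5}(S) = 0 = E_2^{134,6}(S)$. Applying the long exact sequence in degree $(135,5)$: the map $j : E_2^{135,5}(S/3) \to E_2^{134,6}(S)$ lands in a zero group, so $j(x') = 0$, hence $x' \in \Im(i)$; and the map $i : E_2^{135,5}(S) \to E_2^{135,5}(S/3)$ has source a zero group, so $\Im(i) = 0$ in that bidegree. Therefore $x' = 0$, and hence $x = v_1^2 x' = 0$.

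I do not expect a serious obstacle here: the only thing to be careful about is the bookkeeping of degrees — checking that $v_1^2$-divisibility of a class in stem $143$, filtration $5$ indeed forces the preimage to live in stem $135$, filtration $5$ (using Fact \ref{fact:sparseness}\itemref{item:81}, $135 + 5 = 140 \equiv 0 \pmod 4$, so this bidegree is not automatically zero, which is why the computer input is genuinely needed), and that the boundary map $j$ raises filtration by $1$ so that the obstruction group is $E_2^{134,6}(S)$ rather than something else. Both of these are exactly the vanishing groups supplied by Lemma \ref{lem:computer}\itemref{item:for-v1^2-div}, so the argument is immediate once the degrees are lined up.
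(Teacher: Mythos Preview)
Your argument is correct and is precisely the paper's approach: the paper's proof is the single sentence ``Lemma~\ref{lem:computer}\eqref{item:for-v1^2-div} implies $E_2^{135,5}(S/3)=0$,'' and you have simply unpacked that implication via the long exact sequence $E_2^{135,5}(S)\to E_2^{135,5}(S/3)\to E_2^{134,6}(S)$. (One trivial typo: your reference to Fact~\ref{fact:sparseness} should point to part (1) of that fact, not to \texttt{item:81}, which belongs to Lemma~\ref{lem:computer}.)
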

\begin{proof}
Lemma \ref{lem:computer}\eqref{item:for-v1^2-div} implies $E_2^{135,5}(S/3)=0$.
\end{proof}

\begin{lemma}\label{lem:pi81}
We have that $E_2^{81,3}(S)$ is 2-dimensional, and both generators are permanent
cycles.
\end{lemma}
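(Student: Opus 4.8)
The plan is to reduce the statement to the computer data of Lemma~\ref{lem:computer}\eqref{item:81} together with multiplicativity of the Adams--Novikov spectral sequence. By that data, $E_2^{81,3}(S)$ is $2$-dimensional and $E_2^{81,f}(S)=0$ for $f\neq 3,7$, so the only content of the lemma is that the two generators survive.

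First I would isolate which differentials can interact with $E_2^{81,3}(S)$. Nothing maps into it: a $d_r$ hitting $E_r^{81,3}(S)$ would emanate from $E_r^{82,3-r}(S)$, which lies in negative filtration. A $d_r$ emanating from $E_r^{81,3}(S)$ lands in $E_r^{80,3+r}(S)$, and by the sparseness in Fact~\ref{fact:sparseness} that group can be nonzero only when $80+(3+r)\equiv 0\pmod 4$, i.e. $r\equiv 1\pmod 4$; since also $E_2(S)=E_5(S)$, the only candidates are $d_5,d_9,d_{13},\dots$. So the lemma amounts to showing that the two generators survive these.

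The main step is to write down an explicit basis of $E_2^{81,3}(S)$ consisting of permanent cycles. Using Wang's program I would compute the $\alpha_1$-, $\beta_1$-, and $\beta_{i/j}$-multiplications into stem $81$, filtration $3$; I expect each generator to be a product $\beta\cdot z$ of a $\beta$-family element $\beta$ with an image-of-$J$ class $z$ on the $1$-line (the candidate factors being $\beta_1$ with the generator of $E_2^{71,1}(S)$, $\beta_2$ with that of $E_2^{55,1}(S)$, $\beta_3$ with that of $E_2^{39,1}(S)$, or $\beta_5$ with that of $E_2^{7,1}(S)$), or possibly $\alpha_1$ times a $2$-line class in stem $78$. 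One must use only $\beta$-elements that are themselves permanent cycles --- i.e. those in \eqref{eq:beta-perm}, so in particular not $\beta_4$ or $\beta_{3/3}$, which support $d_5$'s by Table~\ref{tab:anss}. Since image-of-$J$ classes are permanent cycles and the spectral sequence is multiplicative, any such product is a permanent cycle, and once we have two of them spanning $E_2^{81,3}(S)$ the lemma follows. These are also the names that, after multiplication by $\beta_1^6$, label the two generators of $E_2^{141,15}(S)$ in Lemma~\ref{lem:computer}\eqref{item:141} and Figure~\ref{fig:143-S}.

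If one generator is not visibly decomposable into known permanent cycles (for instance if it is an indecomposable $\Ext^3$ class, such as a $\gamma$-family element), I would instead kill its differentials directly: $d_5$ would land in $E_5^{80,8}(S)$, $d_9$ in $E_9^{80,12}(S)$, and since stem $81$ lies within the range $n\le 108$ computed in \cite[Table A3.4]{green}, one can read off $\pi_{81}(S)_{(3)}$ and check that the filtration-$3$ part of $E_\infty^{81,*}(S)$ is $2$-dimensional, forcing both generators to survive. I expect the identification step to be the main obstacle: confirming that the two candidate products are genuinely linearly independent in $E_2^{81,3}(S)$ rather than spanning a line (given the many relations among $\alpha$-$\beta$ products), and that the relevant multiplicative identities hold on the $E_2$ page --- exactly the bookkeeping that the computer data in Lemma~\ref{lem:computer} and Figure~\ref{fig:143-S} is designed to supply.
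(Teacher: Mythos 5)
Your primary strategy --- exhibiting a basis of $E_2^{81,3}(S)$ consisting of products of permanent-cycle $\beta$-elements with image-of-$J$ classes --- does not go through: by \cite[Table A3.4]{green}, $\pi_{81}(S)^\hhat_3$ is generated by $\gamma_2$ and the Toda bracket $\an{\alpha_1,\alpha_1,\beta_5}$, so at least one (in fact both) of the filtration-$3$ generators is not a product of previously known permanent cycles, and there is essentially no $\alpha\cdot\beta$ decomposable class available in this degree (the $2$-line vanishes in stem $78$). You anticipated this with your fallback, which is indeed the paper's route: compare the $3$-dimensional $E_2^{81,*}(S)$ (Lemma \ref{lem:computer}, two classes in filtration $3$ and $\alpha_1\beta_1^2\beta_4$ in filtration $7$) against the $2$-dimensional $\pi_{81}$ from Ravenel's table. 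But as written your fallback has a gap: knowing $\dim\pi_{81}=2$ only tells you that exactly one class in stem $81$ dies; a priori that could be one of the filtration-$3$ classes supporting a differential, with $\alpha_1\beta_1^2\beta_4$ surviving, in which case only one filtration-$3$ generator would be a permanent cycle. "Checking that the filtration-$3$ part of $E_\infty^{81,*}$ is $2$-dimensional" is exactly what needs proof, and your alternative of showing the targets $E^{80,8}$, $E^{80,12}$ (and those of longer differentials, which you list as candidates but then drop) are zero is asserted without any supporting data and is not obviously true.

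The missing step is to show that the filtration-$7$ class dies: the paper multiplies the classical differential $d_9(\alpha_1\beta_4)=\beta_1^6$ from Table \ref{tab:anss} by $\beta_1^2$ to get $d_9(\alpha_1\beta_1^2\beta_4)=\beta_1^8$, and checks from the chart that $\beta_1^8$ cannot be hit by a shorter differential, so this $d_9$ is nontrivial. Once $\alpha_1\beta_1^2\beta_4$ is known to die, the count against $\pi_{81}$ forces both filtration-$3$ generators to survive (nothing can hit them, as you correctly observed). With that one differential added, your fallback becomes the paper's proof; without it, the argument is incomplete.
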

\begin{proof}
By \cite[Table A3.4]{green}, $\pi_{81}(S)^\hhat_3$ is 2-dimensional, generated
by $\gamma_2$ and $\an{\alpha_1,\alpha_1,\beta_5}$. Lemma
\ref{lem:computer}\eqref{item:81} gives the structure of $E_2^{81,*}(S)$.
It suffices to show that $\alpha_1\beta_1^2\beta_4$ supports a nontrivial
differential; Table \ref{tab:anss} implies $d_9(\alpha_1\beta_1^2\beta_4) =
\beta_1^8$. Moreover, it is clear from an $E_2(S)$ chart (see
\cite{guozhen-data}) that $\beta_1^8$ cannot be the target of a shorter
differential.
\end{proof}


\begin{lemma}\label{lem:99,5}
There is a differential $d_5(x_{96})=\beta_1^2x_{75}$.
The $\F_3$-vector space $\alpha_1E_2^{99,5}(S)$ is 1-dimensional and is
generated by a class $\alpha_1x_{99}$ where $x_{99}$ is a permanent cycle.
\end{lemma}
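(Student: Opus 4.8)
The statement has two parts: establishing the differential $d_5(x_{96}) = \beta_1^2 x_{75}$, and deducing that $\alpha_1 E_2^{99,5}(S)$ is one-dimensional, spanned by $\alpha_1 x_{99}$ for a permanent cycle $x_{99}$. For the differential, I would argue that $\beta_1^2 x_{75}$ cannot survive to $E_\infty$, and that $d_5$ is the only differential that can kill it. By Lemma \ref{lem:computer}\eqref{item:96}, $E_2^{95,9}(S) = \F_3\{\beta_1^2 x_{75}\}$ with $\alpha_1\beta_1^2 x_{75}\neq 0$; the key point is that $\beta_1^2 x_{75}$ must die, since otherwise it (or its $\alpha_1$-multiple) would give an element of $\pi_{95}(S)^\wedge_3$ or $\pi_{98}(S)^\wedge_3$ in high filtration that does not appear in the known homotopy. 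So one must first cite/recall the relevant homotopy groups $\pi_{95}(S)^\wedge_3$ and $\pi_{98}(S)^\wedge_3$ — presumably from \cite{green} Table A3.4 — to see that all of $E_2^{95,*}(S)$ in filtration $\geq 9$ must be wiped out by differentials. Then, by Fact \ref{fact:sparseness}(2) ($E_2 = E_5$), the only differential into $E_2^{95,9}(S)$ is $d_5$ from $E_2^{96,4}(S) = \F_3\{x_{96}\}$ (one should check via the chart in \cite{guozhen-data} that there are no longer differentials, e.g. from $E_2^{96,f}$ with $f < 4$, by sparseness, and $d_r$ for $r > 5$ would originate in $E_2^{96,4-(r-1)} < 0$ — wait, $d_r$ raises filtration by $r$, so $d_r$ into filtration $9$ comes from filtration $9-r$ in stem $96$; for $r=5$ that's filtration $4$, for $r=9$ filtration $0$, so one also needs $E_2^{96,0}(S)=0$, which holds since the $0$-line is concentrated in stem $0$). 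Hence $d_5(x_{96}) = \beta_1^2 x_{75}$ up to a unit.

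For the second part, Lemma \ref{lem:computer}\eqref{item:99} gives that $E_2^{99,5}(S)$ is two-dimensional with $\dim(\alpha_1 E_2^{99,5}) = 1$, one generator being $\alpha_1 x_{96}$, and that $E_2^{99,f}(S) = 0$ for $f \neq 5, 17$, with $E_2^{99,17}(S) = \F_3\{\alpha_1\beta_1^7\beta_2\}$. From the differential just established, $d_5(\alpha_1 x_{96}) = \alpha_1\beta_1^2 x_{75}$, which is nonzero in $E_2^{95,9}(S)$ by Lemma \ref{lem:computer}\eqref{item:96}. So $\alpha_1 x_{96}$ does not survive. Since $\alpha_1 E_2^{99,5}(S)$ is one-dimensional and spanned by $\alpha_1 x_{96}$, and since $\dim E_2^{99,5} = 2$, pick a second generator $x_{99}$ so that $E_2^{99,5}(S) = \F_3\{x_{96}, x_{99}\}$; then $\alpha_1 x_{99}$ lies in the one-dimensional space $\alpha_1 E_2^{99,5}(S)$, so $\alpha_1 x_{99}$ is a (possibly zero) multiple of $\alpha_1 x_{96}$. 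If $\alpha_1 x_{99} = 0$ we could be done trivially in a degenerate sense, but the lemma asserts $\alpha_1 E_2^{99,5}$ is genuinely one-dimensional and generated by $\alpha_1 x_{99}$, so after adjusting $x_{99}$ by a multiple of $x_{96}$ if necessary, I would arrange $\alpha_1 x_{99} = \alpha_1 x_{96}$ (nonzero). The remaining task is to show $x_{99}$ is a permanent cycle. The only possible differential out of $x_{99}$ is a $d_5$ (by $E_2 = E_5$ and sparseness), landing in $E_2^{98,10}(S)$; I would look up the chart to see that either $E_2^{98,10}(S) = 0$ or that any nonzero target is already hit/excluded, forcing $d_5(x_{99}) = 0$. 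There are no differentials into $x_{99}$ to worry about for survival (that only affects whether it's a boundary, not whether it's a cycle), so $x_{99}$ is a permanent cycle. One should double-check it's not the target of a $d_5$ from $E_2^{100,1}(S)$ — but we only need $x_{99}$ to be a permanent cycle, i.e., a cycle for all $d_r$, which is the statement.

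The main obstacle is the bookkeeping around the second part: one must be careful that the choice of $x_{99}$ complementary to $x_{96}$ can be made so that $\alpha_1 x_{99} \neq 0$ and so that $x_{99}$ supports no $d_5$. Both rely on reading multiplicative and differential information off Wang's computer-generated chart \cite{guozhen-data} in stems $98$–$100$, which is exactly the kind of input Lemma \ref{lem:computer} is meant to package; if the needed facts ($E_2^{98,10}(S)$ and the $\alpha_1$-action on the two generators of $E_2^{99,5}$) are not already in Lemma \ref{lem:computer}, they would need to be added there or cited directly. The differential $d_5(x_{96}) = \beta_1^2 x_{75}$ itself is the cleaner half, being forced by the absence of high-filtration homotopy in stem $95$ together with sparseness.
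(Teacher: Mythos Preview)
Your approach differs from the paper's in both halves, and each half has a genuine gap.

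For the differential $d_5(x_{96})=\beta_1^2 x_{75}$, you argue from the target: $\beta_1^2 x_{75}$ must die (by comparison with $\pi_{95}$), and the only differential \emph{into} $(95,9)$ is the $d_5$ from $(96,4)$. But ``must die'' is not the same as ``must be hit'': you have not excluded the possibility that $\beta_1^2 x_{75}$ itself supports a nontrivial differential and so vanishes without ever being a target. The paper instead argues from the source: since $\pi_{96}(S)^\wedge_3=0$ by \cite[Table A3.4]{green}, the generator $x_{96}\in E_2^{96,4}(S)$ must support a differential (it sits too low in filtration to be a target). By Lemma~\ref{lem:computer}\eqref{item:96} the only classes in $E_2^{95,\geq 9}(S)$ are $\beta_1^2 x_{75}$ (a $d_5$ target) and $\alpha_1\beta_1^4\beta_2^2$ (a $d_9$ target); the latter is already zero in $E_6$ by Lemma~\ref{lem:b6}\eqref{item:ab3}, forcing the $d_5$.

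For the second part, you attempt to show some complementary $x_{99}$ is a permanent cycle by checking possible targets, writing ``the only possible differential out of $x_{99}$ is a $d_5$\dots landing in $E_2^{98,10}(S)$.'' Sparseness and $E_2=E_5$ only say that $d_5$ is the \emph{first} possible differential; you would still need to rule out $d_9, d_{13},\ldots$, and moreover $E_2^{98,10}(S)$ contains the nonzero class $\alpha_1\beta_1^2 x_{75}$, so even the $d_5$ check is not vacuous. The paper sidesteps all of this by using known homotopy: $\pi_{99}(S)^\wedge_3/\operatorname{Im} J\cong\F_3$ by \cite[Table A3.4]{green}, and since $E_2^{99,f}(S)$ is nonzero only for $f=5,17$ with the filtration-17 class $\alpha_1\beta_1^7\beta_2$ hit by a $d_5$ (Lemma~\ref{lem:b6}\eqref{item:ab3}), there must be a permanent cycle $x_{99}\in E_2^{99,5}(S)$. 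Since $d_5(\alpha_1 x_{96})=\alpha_1\beta_1^2 x_{75}\neq 0$, the classes $x_{99}$ and $\alpha_1 x_{96}$ are linearly independent; then $\alpha_1^2=0$ and $\dim(\alpha_1 E_2^{99,5})=1$ force $\alpha_1 E_2^{99,5}(S)=\F_3\{\alpha_1 x_{99}\}$. Your discussion here also contains a degree slip: ``arrange $\alpha_1 x_{99}=\alpha_1 x_{96}$'' equates a class in $E_2^{102,6}$ with one in $E_2^{99,5}$, and ``adjusting $x_{99}$ by a multiple of $x_{96}$'' mixes bidegrees $(99,5)$ and $(96,4)$.
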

See Lemma \ref{lem:computer} for element definitions. The second sentence is
used implicitly when identifying one of the generators of $E_2^{142,14}(S)$ as
$\alpha_1\beta_1^4x_{99}$ (as seen in Figure \ref{fig:143-S}): Wang's program
only shows that there is a nonzero element in $E_2^{142,14}(S)$ that is
$\alpha_1\beta_1^4$ times an element of $E_2^{99,5}(S)$.

\begin{figure}[h]
\includegraphics[width=160pt]{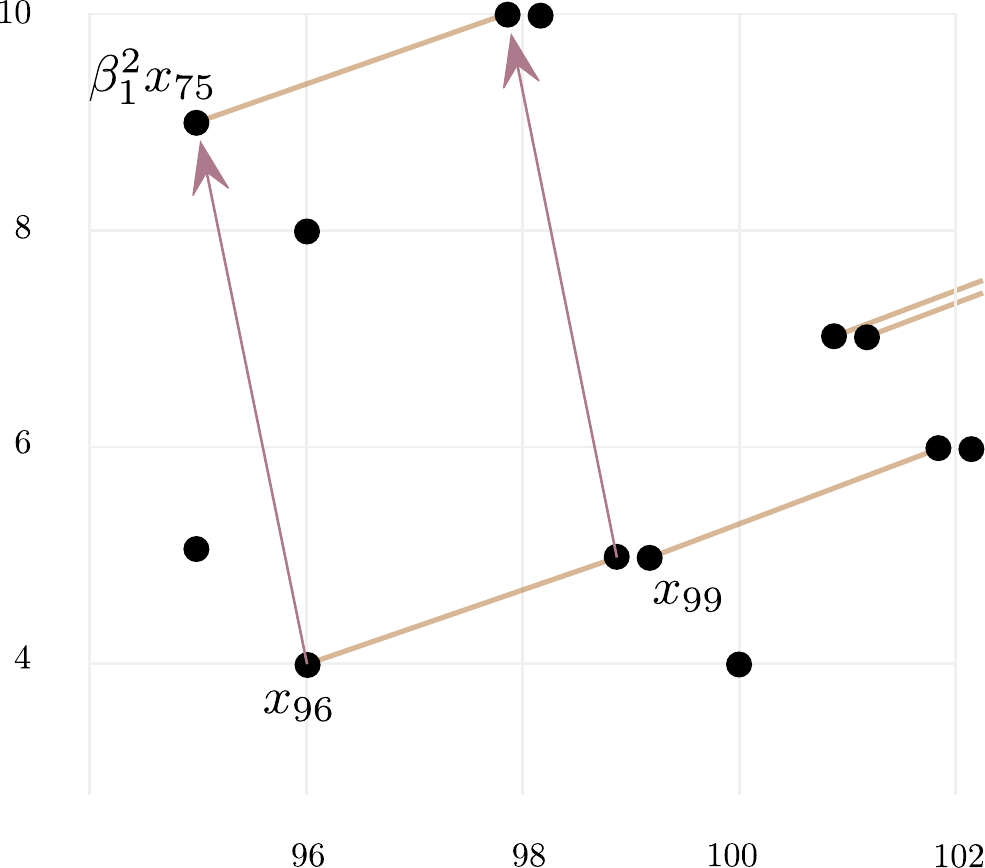}
\caption{$E_2^{s,f}(S)$ in degrees $95\leq s\leq 102$, $4\leq f\leq 10$. Brown
lines represent $\alpha_1$-multiplication. Each dot represents a copy of $\F_3$.
The information in this chart used in the proof of Lemma \ref{lem:99,5} is
summarized in Lemma \ref{lem:computer}\eqref{item:96}\eqref{item:99}.}
\label{fig:18-26}
\end{figure}

\begin{proof}
By \cite[Table A3.4]{green}, $\pi_{96}(S)^\hhat_3=0$, and the generator
$x_{96}\in E_2^{96,4}(S)$ must support a nontrivial differential as it cannot
be a target for degree reasons. We claim this implies a differential
$d_5(x_{96})=\beta_1^2x_{75}$: by
Lemma \ref{lem:computer}\eqref{item:96} the
only other possible target is $\alpha_1\beta_1^4\beta_2^2\in E_2^{95,13}(S)$ (a
possibility for $d_9(x_{96})$), but this is zero in $E_6$ by Lemma
\ref{lem:b6}\eqref{item:ab3} as it is $\alpha_1\beta_1^3$ times the permanent cycle
$\beta_1\beta_2^2$.

From Lemma \ref{lem:computer}\eqref{item:96}, we have $\alpha_1 d_5(x_{96})=
d_5(\alpha_1 x_{96}) = \alpha_1\beta_1^2x_{75}$ is nonzero.
By \cite[Table A3.4]{green}, we have $\pi_{99}(S)^\hhat_3/\Im J \isom \F_3$.
We claim this permanent cycle is detected in filtration 5.
By Lemma \ref{lem:computer}\eqref{item:99}, the only other possibility is
$\alpha_1\beta_1^7 \beta_2\in E_2^{99,17}(S)$, which is the
target of a $d_5$ differential by Lemma \ref{lem:b6}\eqref{item:ab3}. Let $x_{99}$ denote the
permanent cycle in $E_2^{99,5}(S)$. Since $\alpha_1^2=0$ and
$\dim(\alpha_1E_2^{99,5}(S))=1$ by Lemma \ref{lem:computer}\eqref{item:99},
we have that $\alpha_1E_2^{99,5}(S)$ is generated by $\alpha_1x_{99}$.
\end{proof}

\begin{lemma}\label{lem:141,15}
The generator of $E_2^{142,10}(S)$ supports a nontrivial Adams-Novikov $d_5$
differential. The generator of $E_2^{142,6}(S)$ supports a nontrivial
Adams-Novikov $d_9$ differential.
\end{lemma}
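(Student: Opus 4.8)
The plan is to exhibit each generator as a product of a class supporting one of the differentials listed in Table~\ref{tab:anss} with a permanent cycle, and then to apply the Leibniz rule; it is the computer-generated multiplicative data of Figure~\ref{fig:143-S} that makes both the identifications and the non-vanishing checks possible. Observe first that both differentials must land in $E_2^{141,15}(S)$: a $d_5$ out of bidegree $(142,10)$ and a $d_9$ out of bidegree $(142,6)$ both hit bidegree $(141,15)$. By Lemma~\ref{lem:computer}\eqref{item:141} this group equals $\beta_1^6 E_2^{81,3}(S)$ and is $2$-dimensional; since $\dim E_2^{81,3}(S)=2$ as well, multiplication by $\beta_1^6$ is injective on $E_2^{81,3}(S)$, so $E_2^{141,15}(S)$ has basis $\{\beta_1^6 z_1,\beta_1^6 z_2\}$ where $z_1,z_2$ generate $E_2^{81,3}(S)$ and are permanent cycles by Lemma~\ref{lem:pi81}.

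For the $d_5$ statement, I would read off from Figure~\ref{fig:143-S} that the generator of $E_2^{142,10}(S)$ is $\beta_1^2\beta_2 x_{96}$ (the bidegree is right, since $x_{96}$ has bidegree $(96,4)$). Because $\beta_1$ and $\beta_2$ are permanent cycles and $d_5(x_{96})=\beta_1^2 x_{75}$ by Lemma~\ref{lem:99,5}, the Leibniz rule gives $d_5(\beta_1^2\beta_2 x_{96})=\beta_1^4\beta_2 x_{75}$. By Fact~\ref{fact:sparseness}, $E_2(S)=E_5(S)$, so this is already a differential on the $E_5$-page, and it is nontrivial as soon as $\beta_1^4\beta_2 x_{75}\neq 0$ in $E_2^{141,15}(S)$, which is one of the products computed by Wang's program and recorded in Figure~\ref{fig:143-S}.

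For the $d_9$ statement, I would identify the generator of $E_2^{142,6}(S)$, again from Figure~\ref{fig:143-S}, as $\alpha_1\beta_4 z$ for a suitable generator $z$ of $E_2^{81,3}(S)$; this has the correct stem, $61+81=142$. The class $\alpha_1\beta_4$ survives to $E_9$ with $d_9(\alpha_1\beta_4)=\beta_1^6$ by Table~\ref{tab:anss} (equivalently $d_5(\alpha_1\beta_4)=\alpha_1\cdot d_5(\beta_4)=\alpha_1^2\beta_1^2\beta_{3/3}=0$), and $z$ is a permanent cycle, so the product $\alpha_1\beta_4 z$ survives to $E_9$ and $d_9(\alpha_1\beta_4 z)=\beta_1^6 z$ by the Leibniz rule. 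This class is nonzero in $E_2^{141,15}(S)$ because $\beta_1^6$ is injective on $E_2^{81,3}(S)$, and it survives to $E_9$ because --- using the previous paragraph together with $\dim E_2^{142,10}(S)=1$ --- the only $d_5$ into $E_2^{141,15}(S)$ has image the line $\F_3\{\beta_1^4\beta_2 x_{75}\}$, and $\beta_1^6 z$ is linearly independent of $\beta_1^4\beta_2 x_{75}$. Hence $d_9(\alpha_1\beta_4 z)\neq 0$.

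I expect the main obstacle to be that last independence assertion --- that inside the $2$-dimensional group $E_2^{141,15}(S)=\beta_1^6 E_2^{81,3}(S)$ the elements $\beta_1^4\beta_2 x_{75}$ and $\beta_1^6 z$ are linearly independent, so that the $d_5$- and $d_9$-images together exhaust it. That claim, like the identifications of the generators of $E_2^{142,10}(S)$ and $E_2^{142,6}(S)$ and the nonvanishing of $\beta_1^4\beta_2 x_{75}$, is a statement about the module structure of the Adams--Novikov $E_2$-page in stems beyond the range $s\le 108$ covered by the classical charts, and hence rests on Wang's computer calculations as summarized in Lemma~\ref{lem:computer} and Figure~\ref{fig:143-S}; granting those, everything else is a formal application of the Leibniz rule and sparseness.
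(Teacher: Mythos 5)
Your overall strategy is genuinely different from the paper's. The paper argues from the \emph{targets}: by Lemma \ref{lem:computer}\eqref{item:141} and Lemma \ref{lem:pi81}, $E_2^{141,15}(S)=\beta_1^6E_2^{81,3}(S)$ is two-dimensional and consists of $\beta_1^6$-divisible permanent cycles, so by Lemma \ref{lem:b6}\eqref{item:b6} both classes must be \emph{hit}; sparseness and the vector-space structure of $E_2^{142,*}(S)$ in Figure \ref{fig:143-S} (each candidate source group is one-dimensional) then force both the $d_5$ out of $(142,10)$ and the $d_9$ out of $(142,6)$ to be nontrivial, with no need to know what the sources are multiplicatively. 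You instead argue from the \emph{sources}, via Leibniz on putative product decompositions. Your first two reductions (the basis $\{\beta_1^6z_1,\beta_1^6z_2\}$ of $E_2^{141,15}(S)$, injectivity of $\beta_1^6$ on $E_2^{81,3}(S)$) are fine and match the paper's inputs.

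The gap is that the load-bearing multiplicative facts in your argument are nowhere established. You assert that the generator of $E_2^{142,10}(S)$ is $\beta_1^2\beta_2 x_{96}$, that the generator of $E_2^{142,6}(S)$ is $\alpha_1\beta_4 z$ for some generator $z$ of $E_2^{81,3}(S)$, that $\beta_1^4\beta_2 x_{75}\neq 0$, and that $\beta_1^6z$ is linearly independent of $\beta_1^4\beta_2 x_{75}$, saying these can be ``read off'' from Figure \ref{fig:143-S} or are ``summarized in Lemma \ref{lem:computer}.'' They are not: Lemma \ref{lem:computer} records none of these products, and the paper only claims the figure shows the vector-space structure in stems $140$--$144$ plus selected multiplications (the named ones, e.g.\ $\alpha_1\beta_1^4x_{99}$ and $\beta_1^6\beta_{6/3}$ in $E_2^{142,14}$, are different from the ones you need). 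Having the right bidegree does not make $\beta_1^2\beta_2x_{96}$ or $\alpha_1\beta_4z$ nonzero, let alone a generator; if either product vanishes, your Leibniz computation yields $0=0$ and proves nothing, and without the independence claim your argument that $\beta_1^6z$ survives to $E_9$ also collapses. (You also never rule out that $\alpha_1\beta_4z$ is itself hit by a $d_5$ from $E_2^{143,2}(S)$, though that is a minor omission compared with the unverified product data.) So as written the proof is conditional on extra computer output you have not exhibited; the paper's target-based argument is both shorter and avoids needing any of it.
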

\begin{proof}
Combining Lemma \ref{lem:computer}\eqref{item:141} with Lemma \ref{lem:pi81}, we
have that the 2-dimensional vector space $E_2^{141,15}(S)$ is generated by
$\beta_1^6$-divisible permanent cycles. By Lemma \ref{lem:b6}\eqref{item:b6},
both classes in $E_2^{141,15}(S)$ are hit by some differential. 
By the vector space structure of $E_2^{142,*}(S)$ displayed in Figure
\ref{fig:143-S}, the only possibilities are the indicated $d_5$ and $d_9$.
\end{proof}

\begin{lemma}\label{lem:142,14}
The generator of $E_2^{143,9}(S)$ supports a nontrivial Adams-Novikov $d_5$
differential hitting $\alpha_1\beta_1^4 x_{99}$, where $x_{99}$ is the
permanent cycle introduced in Lemma \ref{lem:99,5}. One of the two generators of
$E_2^{143,5}(S)$ supports a nontrivial Adams-Novikov $d_9$ differential hitting
$\beta_1^6 \beta_{6/3}$.
\end{lemma}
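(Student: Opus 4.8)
The plan is to pin down the two asserted differentials as the only maps that can hit the classes $\alpha_1\beta_1^4 x_{99}$ and $\beta_1^6\beta_{6/3}$, which (by Figure \ref{fig:143-S}) are the two independent generators of $E_2^{142,14}(S)$. The inputs will be: the Toda differential $d_5(\beta_{3/3})=\alpha_1\beta_1^3$ (Table \ref{tab:anss}); Lemma \ref{lem:b6}; the fact that $\beta_{6/3}$ (see \eqref{eq:beta-perm}) and $x_{99}$ (Lemma \ref{lem:99,5}) are permanent cycles; sparseness (Fact \ref{fact:sparseness}); and the dimensions of $E_2^{143,9}(S)$ and $E_2^{143,5}(S)$ from Figure \ref{fig:143-S}.

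For the $d_5$: since $\beta_1$ and $x_{99}$ are permanent cycles, the Leibniz rule applied to the Toda differential yields
\[
d_5(\beta_1\beta_{3/3}x_{99}) = \beta_1 x_{99}\cdot d_5(\beta_{3/3}) = \alpha_1\beta_1^4 x_{99},
\]
a class with source in $E_2^{143,9}(S)$ and target in $E_2^{142,14}(S)$. Figure \ref{fig:143-S}, together with the identification of one generator of $E_2^{142,14}(S)$ explained after Lemma \ref{lem:99,5}, shows $\alpha_1\beta_1^4 x_{99}\neq 0$ in $E_2^{142,14}(S)=E_5^{142,14}(S)$; hence $\beta_1\beta_{3/3}x_{99}$ is nonzero and supports a $d_5$ onto it, and since $E_2^{143,9}(S)$ is one-dimensional this class is its generator. (Equivalently, one may avoid naming the source: $\alpha_1\beta_1^4 x_{99}=\beta_1\cdot(\alpha_1\beta_1^3 x_{99})$ is zero in $E_6(S)$ by Lemma \ref{lem:b6}\eqref{item:ab3}, so it is a $d_5$-boundary, necessarily from the generator of $E_2^{143,9}(S)$.)

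For the $d_9$: since $\beta_{6/3}$ is a permanent cycle, so is $\beta_1^6\beta_{6/3}$, which Figure \ref{fig:143-S} exhibits as the other generator of $E_2^{142,14}(S)$; and $\beta_1^6\beta_{6/3}=0$ in $E_{10}(S)$ by Lemma \ref{lem:b6}\eqref{item:b6}. Being a permanent cycle it supports no differential, so it must be the target of one. By sparseness a differential into $E_2^{142,14}(S)$ has length $5$, $9$, or $13$; the $d_{13}$ would emanate from the $1$-line $E_2^{143,1}(S)$, which consists of permanent cycles and is therefore excluded, and a $d_5$ is excluded since the image of $d_5$ on the one-dimensional group $E_2^{143,9}(S)$ is spanned by $\alpha_1\beta_1^4 x_{99}$, independent of $\beta_1^6\beta_{6/3}$. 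Thus $\beta_1^6\beta_{6/3}$ survives to $E_9(S)$ and is hit by a $d_9$ from $E_2^{143,5}(S)$; as that group is two-dimensional, one of its two generators supports the differential.

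The step needing the most care is confirming that the $d_5$ out of $E_2^{143,9}(S)$ hits exactly $\alpha_1\beta_1^4 x_{99}$ with no $\beta_1^6\beta_{6/3}$ component, since this is precisely what keeps $\beta_1^6\beta_{6/3}$ alive past $E_5$ and forces the longer differential. This relies on the Leibniz computation above and on Wang's program telling us that $E_2^{143,9}(S)$ is one-dimensional while $E_2^{142,14}(S)$ is two-dimensional with the stated generators; the rest is routine bookkeeping with permanent cycles and sparseness.
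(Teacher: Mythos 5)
Your proposal is correct and follows essentially the same route as the paper: the Leibniz rule applied to the Toda differential $d_5(\beta_{3/3})=\alpha_1\beta_1^3$ times the permanent cycle $\beta_1 x_{99}$ produces the $d_5$ out of the one-dimensional $E_2^{143,9}(S)$, and Lemma \ref{lem:b6}\eqref{item:b6} together with the independence of $\beta_1^6\beta_{6/3}$ from $\alpha_1\beta_1^4 x_{99}$ (read off Figure \ref{fig:143-S}) forces the $d_9$ from $E_2^{143,5}(S)$. The only differences are cosmetic (your explicit exclusion of a $d_{13}$ is redundant once Lemma \ref{lem:b6} bounds the length by $9$).
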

\begin{proof}
This proof relies on Figure \ref{fig:143-S}, in
particular the fact that the elements mentioned are all nonzero.
For the first statement, we have $d_5(\beta_{3/3}\cdot \beta_1 x_{99}) =
\alpha_1 \beta_1^3\cdot \beta_1x_{99}$ since $x_{99}$ (and hence
$\beta_1x_{99}$) is a permanent cycle.
Since $\beta_{6/3}\in E_2^{82,2}(S)$ is a permanent cycle by \cite[Table
A3.4]{green}, we may apply Lemma \ref{lem:b6}\eqref{item:b6} to show that $\beta_1^6\beta_{6/3}\in E_2^{142,14}(S)$ is the target of a differential $d_r$ for $r\leq 9$.
Since the group $E_2^{143,9}(S)$ is one-dimensional
and we proved above that the generator supported a nontrivial $d_5$,
$\beta_1^6\beta_{6/3}$ must be hit by a $d_9$.
\end{proof}



\begin{proposition}\label{prop:143-only-5}
Every element in $\pi_{143}(S/3)$ is detected in Adams-Novikov filtration $\leq 5$.
\end{proposition}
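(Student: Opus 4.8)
The plan is to analyze the Adams-Novikov spectral sequence for $S/3$ in the $143$-stem by relating it, via the long exact sequence associated to $S\xrightarrow{3}S\to S/3\to \Sigma S$, to the $142$- and $143$-stems for the sphere, whose structure is supplied by Lemma \ref{lem:computer} and Figure \ref{fig:143-S}. Concretely, for each Adams-Novikov filtration $f \geq 6$ I would show that every class in $E_2^{143,f}(S/3)$ either fails to be a permanent cycle (it supports a differential) or is not actually present on $E_\infty$ (it is hit by a differential or is zero on $E_2$ after accounting for the differentials in Lemmas \ref{lem:141,15} and \ref{lem:142,14}). Since by Fact \ref{fact:sparseness} the relevant filtrations are constrained mod $4$ and $E_2 = E_5$, there are only finitely many filtration levels to check, and the bulk of the content is organized around the $d_5$ and $d_9$ differentials already identified.

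The key steps, in order: First, enumerate $E_2^{143,f}(S/3)$ for $f\geq 6$ using the long exact sequence: a class in $E_2^{143,f}(S/3)$ is either a bottom cell element $i(a)$ with $a\in E_2^{143,f}(S)$, or detected by its image $j(x)\in E_2^{142,f+1}(S)$; combine with the vector space data in Figure \ref{fig:143-S} to list all possibilities. Second, kill the high-filtration bottom cell classes: the $\beta_1^6$-divisible permanent cycles in $E_2^{141,15}(S)$ force (via Lemma \ref{lem:b6}) the $d_5$ and $d_9$ of Lemma \ref{lem:141,15}, and the $d_5$ hitting $\alpha_1\beta_1^4 x_{99}$ together with the $d_9$ hitting $\beta_1^6\beta_{6/3}$ from Lemma \ref{lem:142,14} remove the corresponding classes from $E_\infty^{143,*}(S/3)$ by naturality of $i$ (using $E_2(S/3) = E_5(S/3)$ and Lemma \ref{lem:d5}\eqref{item:i-transfer} for the $d_5$'s; the $d_9$'s transfer similarly once $E_5 = E_9$ in the relevant spot). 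Third, handle the top cell classes: a class $x\in E_2^{143,f}(S/3)$ with $j(x)\neq 0$ in $E_2^{142,f+1}(S)$ can only survive if $j(x)$ does; but the generators of $E_2^{142,10}(S)$ and $E_2^{142,6}(S)$ support differentials by Lemma \ref{lem:141,15}, and $E_2^{142,14}(S)$ contains $\beta_1^6\beta_{6/3}$ which is hit, so the surviving top cell contributions in high filtration are eliminated as well (using the Geometric Boundary Theorem to convert $E_2$-level statements about $j$ into statements about $E_\infty$). Finally, assemble these to conclude that $E_\infty^{143,f}(S/3) = 0$ for $f > 5$, so every permanent cycle in $\pi_{143}(S/3)$ is detected in filtration $\leq 5$.

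The main obstacle I expect is bookkeeping in the top-cell analysis: transferring a differential $d_r$ on the sphere to a statement about $E_\infty^{143,*}(S/3)$ via the boundary map $j$ is not automatic, since $j(x)$ supporting or receiving a differential controls $x$ only modulo $\ker j = \im i$, so one must simultaneously track both the bottom-cell and top-cell pieces at each filtration and ensure no class "leaks" between them. In practice this means that at each filtration $f\geq 6$ one argues: the top-cell part dies because its image under $j$ dies on the sphere, and the residual bottom-cell part $i(a)$ dies because $a\in E_2^{143,f}(S)$ itself supports or receives one of the tabulated differentials (or is zero). Verifying that the differentials listed in Lemmas \ref{lem:141,15} and \ref{lem:142,14} are exactly enough to clear every filtration level $\geq 6$ — with nothing left over — is the crux, and it is exactly here that the explicit chart in \cite{guozhen-data} and Figure \ref{fig:143-S}, together with the high $\beta_1$-divisibility of the classes in play, make the argument tractable.
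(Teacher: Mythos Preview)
Your overall approach is the same as the paper's---enumerate $E_2^{143,f}(S/3)$ for $f>5$ via the long exact sequence, then show each class either supports or is hit by a differential---but your outline has a concrete gap: Lemmas~\ref{lem:141,15} and~\ref{lem:142,14} only account for filtrations $9$ and $13$. From the chart there are also classes in filtrations $17$, $21$, and $29$, and none of these are touched by the differentials in those two lemmas. The filtration $17$ bottom cell class is $\beta_1^6\beta_2 x_{57}$, which supports a $d_5$ coming from $d_5(x_{57})=\beta_1^3\beta_2$ in Table~\ref{tab:anss}; the filtration $21$ top cell class $\bar{\beta_1^9\beta_2^2}$ is hit by the $d_5$ induced from that same differential via Lemma~\ref{lem:d5}\eqref{item:delta-transfer}; and the filtration $29$ class $i(\alpha_1\beta_1^{14})$ dies on $E_6$ by Lemma~\ref{lem:b6}\eqref{item:ab3}. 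You need these additional inputs, not just Lemmas~\ref{lem:141,15} and~\ref{lem:142,14}.

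There is also a logical slip in your third step: the claim ``a top cell class $x$ can only survive if $j(x)$ does'' is false as stated. If $j(x)$ is \emph{hit} by a differential on the sphere, that says nothing directly about $x$; you only get information when $j(x)$ \emph{supports} a differential (Lemma~\ref{lem:d5}\eqref{item:delta-transfer}). For the filtration $13$ classes, the correct mechanism is the one you partially describe in your obstacle paragraph: the sphere differential $d_r(a)=j(x)$ lifts, via a top cell lift $\bar a\in E_r^{144,*}(S/3)$, to a differential $d_r(\bar a)\equiv x\pmod{\Im(i)}$. For the $d_9$ case this requires first verifying that $\bar a$ survives to $E_9$, which uses the filtration $9$ analysis. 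Once you incorporate these points and the missing filtrations, the argument is exactly the paper's.
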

\begin{proof}
We list the elements in $E_2^{143,f}(S/3)$ for $f> 5$.

\renewcommand{\figurename}{Table}
\begin{figure}[H]
\begin{tabular}{c|c|c}
Filtration $f$ &  \# bottom cell generators & \# top cell generators
\\\hline\hline 9 & 1 & 1
\\\hline 13 & 0 & 2
\\\hline 17 & 1 & 0
\\\hline 21 & 0 & 1
\\\hline 29 & 1 & 0
\end{tabular}
\caption{Classes in $E_2^{143,f}(S/3)$ for $f\geq 2$}
\end{figure}

We encourage the reader to refer to Figure \ref{fig:143-moore}, which is derived
from Figure \ref{fig:143-S}, alongside the rest of the proof.

\emph{Filtration 9:} We claim that both classes $E_2^{143,9}(S/3)$ support $d_5$ differentials. The
bottom cell class does so because of Lemmas \ref{lem:142,14} and
\ref{lem:d5}\eqref{item:i-transfer}, and the top cell class does so because of Lemma
\ref{lem:141,15} and Lemma \ref{lem:d5}\eqref{item:delta-transfer}.

\emph{Filtration 13:} 
We may take the two generators of $E_2^{143,13}(S/3)$ to be classes
$\bar{\alpha_1\beta_1^4x_{99}}$ and
$\bar{\beta_1^6\beta_{6/3}}$ defined such that their image under $j$ is
$\alpha_1\beta_1^4x_{99}$ and $\beta_1^6\beta_{6/3}$, respectively.
By Lemma \ref{lem:d5}\eqref{item:delta-transfer}, the $d_5$ in Lemma \ref{lem:142,14} induces a
$d_5$ differential hitting
$\bar{\alpha_1\beta_1^4x_{99}}$; note that $\Im(i)=0$ in this degree.

By Lemma \ref{lem:142,14} we have a class $t_2\in E_2^{143,5}(S)$
such that $d_9(t_2) = \beta_1^6\beta_{6/3}$. Let $\bar{t_2}$ be the top cell class in
$E_2^{144,4}(S/3)$ associated to the 3-torsion element $t_2$.
We wish to show that there is a differential $d_9(\bar{t_2}) = \bar{\beta_1^6\beta_{6/3}}$. 
First we check that $\bar{t_2}$ survives to the $E_9$ page.
The only possible targets
for such a shorter differential are in $E_2^{143,9}(S/3)$, and we showed
above that these both support nontrivial $d_5$ differentials.
The map induced by $j$ on $E_2$ pages shows that $d_9(\bar{t_2}) \equiv
\bar{\beta_1^6\beta_{6/3}}$ modulo $\ker(j)$. We have $E_9^{143,13}(S/3) =
\F_3\{ \bar{\beta_1^6\beta_{6/3}} \}$, and
$j(\bar{\beta_1^6\beta_{6/3}}) = \beta_1^6\beta_{6/3}$ which is nonzero in
$E_9(S)$. Thus there is a nonzero $d_9$ differential as claimed.

\emph{Filtration 17:}
The generator of $E_2^{143,17}(S)$ is $\beta_1^6\beta_2x_{57}$, where $x_{57}$
is the generator of $E_2^{57,3}(S)$. Using a differential in
Table~\ref{tab:anss}, we have a differential $d_5(\beta_1^6\beta_2x_{57}) =
\beta_1^9\beta_2^2$. By Lemma~\ref{lem:d5}\eqref{item:i-transfer} we have a differential
$d_5(i(\beta_1^6\beta_2x_{57})) = i(\beta_1^9\beta_2^2)$.

\emph{Filtration 21:}
By Lemma \ref{lem:d5}\eqref{item:delta-transfer}, the $d_5$ differential on $\beta_1^6\beta_2x_{57}$
discussed in the filtration 17 case above gives rise to a differential
$d_5(\bar{\beta_1^6\beta_2x_{57}}) = \bar{\beta_1^9\beta_2^2}$ over $S/3$.

\emph{Filtration 29:} The generator of $E_2^{143,29}(S/3)$ is
$i(\alpha_1\beta_1^{14})$; this class is zero in $E_6(S/3)$ by Lemma
\ref{lem:b6}\eqref{item:ab3}.
\end{proof}

\begin{remark}
The dependence of Proposition \ref{prop:143-only-5} on computer calculations
would be reduced if we could make precise the observation that
much of the Adams-Novikov $E_2$-page is $\beta_1$-periodic, and classes in high
filtrations are highly $\beta_1$-divisible. Using \cite[Theorem 2.3.1, Remark
2.3.5(c)]{palmieri-book}, one can prove that multiplication by $\beta_1$ is an
isomorphism on the Adams $E_2$ page restricted to Adams filtration $f_A$, stem
$s$, and filtration $\nu$ in the algebraic Novikov spectral sequence
$\Ext^{*,*}_A(\F_3,\F_3)\implies E_2^{*,*}(S)$ if
$$ f_A > \tf{1\over 23}s + \tf{24\over 23}\nu + \tf{159\over 23}. $$
By keeping track of the effect on the algebraic Novikov spectral sequence, one
can derive that $\beta_1$ acts injectively (up to higher algebraic Novikov
filtration) on the subspace of $E_2^{s,f}(S)$ in
algebraic Novikov filtration $\nu$ if 
\begin{equation}\label{eq:inj-region} f > \tf{1\over 23}s + \tf{1\over 23}\nu + \tf{169\over 23}.
\end{equation}
Surjectivity is harder to prove.
Even if we knew that $\beta_1$ acted isomorphically on the region
\eqref{eq:inj-region} (which is
often true), this is not enough to prove the $\beta_1$-divisibility results we
need.
For example, in Proposition \ref{prop:143-only-5} we use the fact that the generator
$x$ of $E_2^{143,17}(S)$ is divisible by $\beta_1^6$. This element has $\nu=0$,
and $\beta_1^{-1}x$ and $\beta_1^{-2}x$ lie in the region
\eqref{eq:inj-region} but $\beta_1^{-3}x$ does not.
Improving this bound would also be of use more generally to the study of the
3-primary Adams and Adams-Novikov spectral sequences.
\end{remark}

\renewcommand{\figurename}{Figure}
\begin{minipage}[t]{0.5\textwidth}
\begin{figure}[H]
\vspace{-20pt}
\includegraphics[width=95pt]{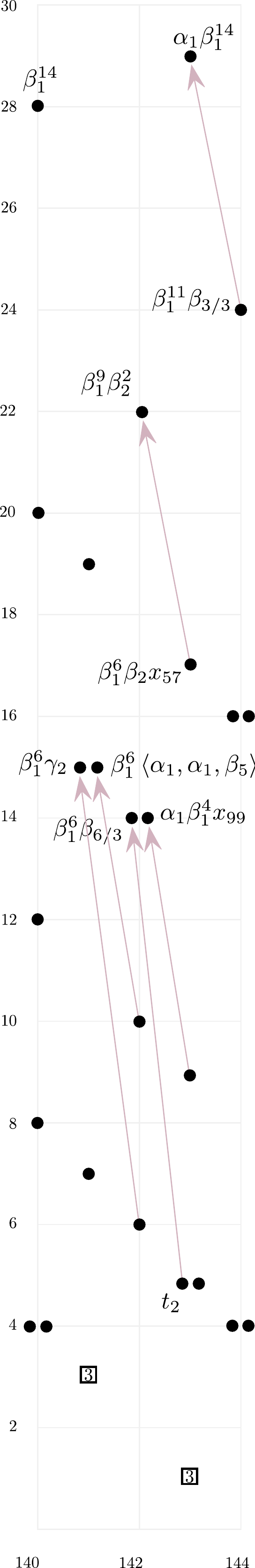}
\vspace{10pt}
\caption{$E_2^{s,f}(S)$ in degrees $140\leq s\leq 144$, along with
some Adams-Novikov differentials. A box containing ``3'' denotes a copy
of $\Z/27$. Multiplications by $\alpha_1$ are not shown.}
\label{fig:143-S}
\end{figure}
\end{minipage}
\begin{minipage}[t]{0.5\textwidth}
\begin{figure}[H]
\vspace{-20pt}
\includegraphics[width=120pt]{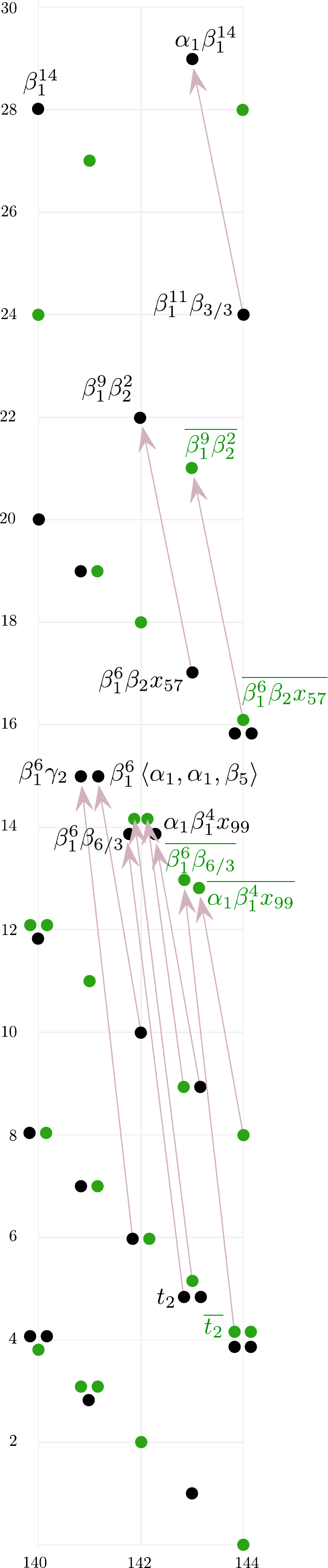}

\vspace{10pt}
\caption{$E_2^{s,f}(S/3)$ in degrees $140\leq s\leq 144$, along with
some Adams-Novikov differentials. Green dots denote top cell classes.
Multiplications by $\alpha_1$ are not shown.}
\label{fig:143-moore}
\end{figure}
\end{minipage}

\section{Survival of $v_2^9$}\label{sec:v2^9}
In this section, we prove Theorem \ref{thm:v2^9}, which says that $v_2^9$ is a
permanent cycle in $E_2(S/(3,v_1^8))$. We first explain the choice of exponent
of $v_1$.
Since $\eta_R(v_2) \equiv v_2 + v_1t_1^3 -v_1^3t_1 \pmod 3$ in the Hopf
algebroid $(BP_*, BP_*BP)$ (see e.g. \cite[(6.4.16)]{green}), we have that
$v_2^3$ is an element of $E_2(S/(3,v_1^m))$ for $m\leq 3$, and $v_2^9$ is an
element of $E_2(S/(3,v_1^m))$ for $m\leq 9$.
On the other hand, we would like to work with $m\geq 8$, since those are the
values of $m$ for which $\beta_{9/8}$ is in the image of the
composition of Adams-Novikov $E_2$ page boundary maps $E_2(S/(3,v_1^m)) \to
E_2(S/3)\to E_2(S)$. 
Trivial modifications to the work in this section show that $v_2^9\pm
v_1^8v_2^7$ is a self-map on $S/(3,v_1^9)$; see Remark \ref{rmk:m=9}. However,
this slight strengthening is not necessary for our purposes, and we write down
our results for $v_2^9\in \pi_*(S/(3,v_1^8))$ essentially for cosmetic reasons,
avoiding the correction term. To obtain the families in
Theorem \ref{thm:beta} other than $\beta_{9t+9/j}$, it suffices to work with
$S/(3,v_1^4)$.

The main ingredients for proving Theorem \ref{thm:v2^9} are Lemma
\ref{lem:143-v1div} and Proposition \ref{prop:143-only-5} from the previous
section, and the following lemma (below, specialized to our setting) due to
the second author. It draws a connection between hidden $v_1^8$-extensions in
$\pi_*(S/3)$, and differentials of the minimum length (i.e., $d_5$
differentials) in the Adams-Novikov spectral sequence for $S/(3,v_1^8)$.

\begin{lemma}[{\cite[Lemma 1.4]{shimomura-L2-toda-smith}}]
\label{lem:extensions-diff}
Let $m\geq 1$.
Suppose we have $y\in E_5(S/(3,v_1^m))$ such that $j_m(y)$ is a
nontrivial permanent cycle in $E_5^{s,f}(S/3)$, and let $w$ denote an element
in $E_5(S/3)$ detecting the product $v_1^m\cdot \{ j_m(y) \} \in \pi_*(S/3)$. Then there is a differential
$$ d_5(y) = i_m(w) $$
in $E_5(S/(3,v_1^m))$.
\end{lemma}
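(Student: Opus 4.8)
The plan is to prove this by naturality of the maps $i_m$ and $j_m$ in the cofiber sequence
$$ \Sigma^{4m}S/3\ttoo{v_1^m} S/3 \ttoo{i_m} S/(3,v_1^m)\ttoo{j_m} \Sigma^{4m+1} S/3, $$
interpreting the hidden $v_1^m$-extension in $\pi_*(S/3)$ via the Geometric Boundary Theorem. First I would set up the situation carefully: let $\{j_m(y)\}\in\pi_s(S/3)$ be the homotopy class converging to the permanent cycle $j_m(y)\in E_5^{s,f}(S/3)$ (which exists by hypothesis). Because $j_m$ is induced by the connecting map of the cofiber sequence, the composite $S/3 \ttoo{v_1^m} S/3 \ttoo{i_m} S/(3,v_1^m)$ is null, so the class $v_1^m\cdot\{j_m(y)\}\in\pi_*(S/3)$ lifts along $j_m$ — more precisely, $v_1^m\cdot\{j_m(y)\}$ is (up to sign) the image under the connecting homomorphism of a class in $\pi_*(S/(3,v_1^m))$, and that class can be taken to be the homotopy class lifting $y$ in an appropriate filtration sense. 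The element $w\in E_5(S/3)$ is by definition a representative of $v_1^m\cdot\{j_m(y)\}$, sitting in filtration $f+1$ (one higher than $j_m(y)$, since $v_1^m$ raises Adams–Novikov filtration by one in $E_2(S/3)$, being a $d_0$-boundary phenomenon — concretely, multiplication by $v_1^m$ on $BP_*/3$ induces a filtration-increasing map, but here "hidden extension" means the product lands one filtration higher than the naive product).

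The key step is to run the standard argument that converts a hidden extension into a differential on the adjacent spectral sequence page. I would argue as follows. The class $y\in E_5^{s',f'}(S/(3,v_1^m))$ has $j_m(y)$ a permanent cycle, so $y$ survives to $E_5$ but the product structure forces it to die: if $y$ were a permanent cycle in $E_2(S/(3,v_1^m))$, it would converge to some $\tilde y\in\pi_*(S/(3,v_1^m))$ with $j_m(\tilde y) = \{j_m(y)\}$, and then $v_1^m\cdot\{j_m(y)\} = v_1^m\cdot j_m(\tilde y) = j_m(v_1^m\cdot\tilde y)$; but $v_1^m$ acts as zero on $S/(3,v_1^m)$, so $v_1^m\cdot\{j_m(y)\} = 0$, contradicting the assumption that $w\neq 0$ detects it. Hence $y$ supports a differential. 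By Fact \ref{fact:sparseness} the first possible differential is $d_5$, so $d_5(y)\neq 0$. To identify the target as $i_m(w)$, I would use the Geometric Boundary Theorem / naturality precisely: the differential $d_5(y)$ lands in $E_5^{s',f'+5}$, and one reads off from the cofiber sequence long exact sequence that the target must be $i_m$ of a class detecting exactly the obstruction to lifting $v_1^m\cdot\{j_m(y)\}$, which is $w$ by definition. The bookkeeping of degrees — $i_m$ preserves degrees, $d_5$ raises filtration by $5$ and lowers stem by $1$ — confirms $i_m(w)$ lies in the right bidegree, and the fact that $j_m(i_m(w)) = 0$ is consistent with $w$ being in the image of the connecting map from $\pi_*(S/(3,v_1^m))$.

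The main obstacle I expect is making the identification "$d_5(y) = i_m(w)$" rigorous rather than merely "up to indeterminacy." The subtlety is that $w$ is only defined up to elements of higher filtration that also detect $v_1^m\cdot\{j_m(y)\}$, and there may be several classes in $E_5^{s',f'+5}(S/(3,v_1^m))$ mapping to the same thing under $i_m$ or lifting the extension; one must check that the Geometric Boundary Theorem pins down $d_5(y)$ to equal $i_m(w)$ on the nose in $E_5$, using that $i_m$ is injective on the relevant subquotient (which follows since $\ker(i_m) = \operatorname{Im}(v_1^m)$ on $E_2$ pages, and in the degree in question $v_1^m$-divisible classes are controlled). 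A clean way to handle this is to phrase the whole argument in terms of the $v_1^m$-Bockstein / the exact couple associated to the tower of $S/(3,v_1^m)$'s, where the connecting differential literally is multiplication by $v_1^m$ followed by the boundary, so that the formula $d_5(y) = i_m(w)$ becomes a tautological restatement of "$w$ detects $v_1^m\{j_m(y)\}$." Since this lemma is cited from \cite{shimomura-L2-toda-smith}, I would also remark that the full proof is given there, and the role here is just to record the specialization to $m = 8$ that feeds into Theorem \ref{thm:v2^9}.
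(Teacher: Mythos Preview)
The paper does not give its own proof of this lemma: it is stated with the citation to \cite[Lemma 1.4]{shimomura-L2-toda-smith} and used as a black box, so there is no ``paper's approach'' to compare against. Your instinct at the end---that the clean argument is the exact-couple / Bockstein formulation in which the equality $d_5(y) = i_m(w)$ becomes essentially tautological---is correct, and that is the content of the cited reference.

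That said, your sketch before that point has two concrete errors worth flagging. First, the filtration bookkeeping is off: multiplication by $v_1^m$ on $E_2(S/3)$ does \emph{not} raise Adams--Novikov filtration by one---it preserves filtration (it is literally multiplication by an element of $BP_*/3$). If $j_m(y)\in E_5^{s,f}(S/3)$ then $y\in E_5^{s+4m+1,\,f-1}(S/(3,v_1^m))$, so $d_5(y)$ lands in filtration $f+4$, and hence for the equality $d_5(y)=i_m(w)$ to make sense one needs $w\in E_5^{s+4m,\,f+4}(S/3)$, not filtration $f+1$. The ``hidden'' extension jumps by $4$ filtrations, consistent with Fact~\ref{fact:sparseness}. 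Second, your argument by contradiction (``if $y$ were a permanent cycle then $v_1^m\cdot\{j_m(y)\}=0$, contradicting $w\neq 0$'') presupposes $w\neq 0$, but the lemma makes no such assumption---indeed, in the paper's application (Lemma~\ref{lem:b9/8-extn}) the lemma is used in precisely the direction $d_5(y)=0 \Rightarrow i_m(w)=0$, so the equality must be proved uniformly, not just the nonvanishing of $d_5(y)$.
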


In the next lemma we separate out the general strategy used to prove that
$v_2^9$ and other elements in Section \ref{sec:betas} are permanent
cycles.
\begin{lemma}\label{lem:perm-cycle-utility}
~\begin{enumerate} 
\item\label{item:utility-3} Let $x\in E_2^{s,f}(S/3)$ for $f\leq 3$ be an
element such that $j(x)\in E_2^{s-1,f+1}(S)$ is a permanent cycle and $\{ j(x)
\}\in \pi_{s-1}(S)$ is an essential element of order 3. Furthermore, suppose that
$\Im(i: E_2^{s,f}(S)\to E_2^{s,f}(S/3))$ consists of permanent cycles. Then
$x\in E_2^{s,f}(S/3)$ is a permanent cycle.
\item\label{item:utiliy-v1} Let $x\in E_2^{s,f}(S/(3,v_1^m))$ for $f\leq 3$ be
an element such that $j_m(x)\in E_2^{s-4m-1,f+1}(S/3)$ is a permanent cycle and
$\{ j_m(x) \}$ is an essential element with $v_1^m\cdot \{ j_m(x) \}=0\in \pi_*(S/3)$.
Furthermore, suppose that $\Im(i_m:E_2^{s,f}(S/3)\to E_2^{s,f}(S/(3,v_1^m)))$
consists of permanent cycles. Then $x\in E_2^{s,f}(S/(3,v_1^m))$ is a permanent cycle.
\end{enumerate}
\end{lemma}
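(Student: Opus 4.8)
The two parts are structurally identical—one works over $S/3$ with the cofiber sequence $S\xrightarrow{3}S\to S/3\to \Sigma S$, the other over $S/(3,v_1^m)$ with $\Sigma^{4m}S/3\xrightarrow{v_1^m}S/3\to S/(3,v_1^m)\to \Sigma^{4m+1}S/3$—so I would prove part \eqref{item:utility-3} in detail and remark that \eqref{item:utiliy-v1} follows by the same argument with $i,j,3$ replaced by $i_m,j_m,v_1^m$ and the cofiber sequence $S/3\xrightarrow{v_1^m}S/3\to S/(3,v_1^m)$ in place of $S\xrightarrow{3}S\to S/3$. The plan is to induct on the page $r\ge 5$ (using Fact \ref{fact:sparseness}, so $E_2=E_5$ and the first possible differential is $d_5$), showing that $x$ survives to $E_r$ and $d_r(x)=0$. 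At each stage, applying $j$ to the hypothetical differential $d_r(x)=z\in E_r^{s-r,f+r-1}(S/3)$ and using that $j$ commutes with differentials gives $d_r(j(x)) = j(z)$; but $j(x)$ is assumed to be a permanent cycle, so $d_r(j(x))=0$, hence $j(z)=0$, i.e. $z\in\ker(j)=\Im(i)$ by the long exact sequence $E_r(S)\xrightarrow{i}E_r(S/3)\xrightarrow{j}E_r(S)$ (which holds on every page since the cofiber sequence is a sequence of spectra).

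So the target of any differential off $x$ is forced to lie in the image of $i$. Now I would invoke the hypothesis that $\Im(i:E_2^{s,f}(S)\to E_2^{s,f}(S/3))$ consists of permanent cycles together with the essentialness assumption. The key point is a bookkeeping one: if $d_r(x)=i(w')$ is nonzero in $E_r(S/3)$ for some $w'\in E_r^{s-r,f+r-1}(S)$, then $i(w')$ must itself be a nonzero permanent cycle target, which is absurd since permanent cycles cannot be hit by differentials—but one must be careful that $i(w')$ could be zero in $E_r(S/3)$ even if nonzero in $E_2(S/3)$, i.e.\ that it was itself hit earlier. Here is where the hypothesis $\{j(x)\}$ has order exactly $3$ (resp.\ $v_1^m\{j_m(x)\}=0$) does real work, via Lemma \ref{lem:extensions-diff} and its analogue: this is exactly the statement that controls the relevant $d_5$, ruling out the scenario where $x$ would detect a hidden extension and thus be forced to support a differential.

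The cleanest way to package this: $x$ is a permanent cycle if and only if it detects an element of $\pi_{s}(S/3)$, and by the geometric boundary theorem applied to $S\xrightarrow{3}S\to S/3\to \Sigma S$, the obstruction to lifting the permanent cycle $j(x)\in\pi_{s-1}(S)$ (which exists by hypothesis) along $j:\pi_s(S/3)\to\pi_{s-1}(S)$ is precisely the image of $\{j(x)\}$ under the connecting map $\partial:\pi_{s-1}(S)\to\pi_{s-1}(S)$, which is multiplication by $3$. Since $\{j(x)\}$ has order $3$, this obstruction vanishes, so there exists $\tilde x\in\pi_s(S/3)$ with $j(\tilde x)=\{j(x)\}$; such $\tilde x$ is detected in filtration $\le f$ by some permanent cycle $x'\in E_2^{s,f'}(S/3)$ with $j(x')=j(x)$ (up to the filtration jump), and then $x-x'\in\ker(j)=\Im(i)$ is a sum of permanent cycles by the second hypothesis, whence $x=x'+i(\text{perm. cycle})$ is a permanent cycle. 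The main obstacle—and the place I expect to spend the most care—is the filtration comparison in this last step: ensuring that the permanent cycle $x'$ detecting $\tilde x$ actually lives in the same bidegree $(s,f)$ as $x$ (rather than a higher filtration $f'>f$, in which case $j(x')$ would have higher filtration than $j(x)$ and one could not conclude $j(x-x')=0$). Handling this is exactly why the hypothesis $f\le 3$ is imposed: combined with sparseness (Fact \ref{fact:sparseness}) and the list of low-filtration classes, $f\le 3$ leaves no room for a filtration jump that would break the argument, since any element of $\pi_{s-1}(S)$ represented in filtration $f+1\le 4$ by $j(x)$ cannot also be represented in strictly higher filtration without an intervening class that sparseness forbids.
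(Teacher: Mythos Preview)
Your third paragraph is the correct argument and is essentially what the paper does: lift $\{j(x)\}$ along $j:\pi_s(S/3)\to\pi_{s-1}(S)$ using the order-$3$ hypothesis, detect the lift by a permanent cycle $x'\in E_2^{s,f}(S/3)$ (using $f\le 3$ and sparseness to pin down the filtration), and then observe $x-x'\in\ker(j)=\Im(i)$ is a permanent cycle by hypothesis. One small point you should make precise: you assert $j(x')=j(x)$ on $E_2$, but a priori you only know both classes detect the same homotopy element $\{j(x)\}$. The paper handles this by writing $x'=x+y$ and noting that $j(y)$, being a boundary in filtration $\le 4$, must vanish on $E_2$ by sparseness; you should say this explicitly rather than ``up to the filtration jump.''

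Your first two paragraphs, however, are a detour that does not work as written and should be dropped. The induction-on-$r$ approach runs into the problem that the hypothesis on $\Im(i)$ is stated only in bidegree $(s,f)$, whereas the target of $d_r(x)$ lies in bidegree $(s-1,f+r)$; knowing $z\in\Im(i)$ in that degree tells you nothing about whether $z$ is a permanent cycle. Your invocation of Lemma~\ref{lem:extensions-diff} is also misplaced: that lemma is used elsewhere in the paper (to analyze $d_5(v_2^9)$ in $S/(3,v_1^8)$), but it plays no role in the proof of this lemma. The order-$3$ hypothesis is used solely for the lifting step in homotopy, not to control differentials directly.
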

\begin{proof}
We just prove (1), as (2) is analogous. Consider the exact
sequences
\begin{align}\label{eq:E2-LES} & E_2^{s,f}(S)\too{3} E_2^{s,f}(S)\too{i} E_2^{s,f}(S/3)\too{j} E_2^{s-1,f+1}(S)
\\\notag  & \pi_s(S)\too{3}
\pi_s(S)\too{i}\pi_s(S/3)\too{j}\pi_{s-1}(S)\too{3}\pi_{s-1}(S)
\end{align}
associated to the cofiber sequence $S\too{3}S\too{i} S/3\too{j} S$. (For the
first long exact sequence, we are using the fact that $j$ induces the zero map in
$BP$-homology.) Suppose
that $x\in E_2^{s,f}(S/3)$ is an element such that $j(x)$ is a permanent cycle
with $3\cdot \{ j(x) \}=0$. Then there exists an element $\xi\in \pi_s(S/3)$ such
that $j(\xi) = \{ j(x) \}$. Since $j:S/3 \to \Sigma S$ induces a map of
Adams-Novikov spectral sequences, the induced map on homotopy $j:\pi_*(S/3)\to \pi_*(\Sigma
S)=\pi_{*-1}(S)$ respects Adams-Novikov filtration; thus $j(x)$ being detected
in filtration $f+1$ implies $\xi$ is detected in filtration
$\leq f$. The assumption $f\leq 3$ combined with Fact \ref{fact:sparseness}
implies that $\xi$ is detected in filtration $f$. We may write the detecting
element as $x+y$ for some $y\in E_2^{s,f}(S/3)$. By the Geometric Boundary
Theorem, $j(x+y)$ converges to $j(\xi)$, and we also have that $j(x)$ converges
to $j(\xi)$. So $j(y)$ is a boundary. But $j(y)$ has filtration $\leq 4$, so
Fact \ref{fact:sparseness} implies $j(y)=0$ in $E_2(S)$. By \eqref{eq:E2-LES},
we have that $y$ is in the image of $i$. By the assumption about $\Im(i)$, $y$
is a permanent cycle, and we have from above that $x+y$ is a permanent cycle.
Therefore, $x$ is a permanent cycle.
\end{proof}

\begin{lemma}\label{lem:delta-perm}
The element $\bar{\beta_{9/8}} =j_8(v_2^9)\in E_2^{111,1}(S/3)$ is a permanent cycle.
\end{lemma}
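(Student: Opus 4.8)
The plan is to deduce this from Lemma \ref{lem:perm-cycle-utility}\eqref{item:utiliy-v1}, applied with $m=8$ and $x = v_2^9\in E_2^{112,0}(S/(3,v_1^8))$ (so $f=0\leq 3$). The hypothesis $j_8(x)\in E_2^{111,1}(S/3)$ being a permanent cycle is exactly what we are trying to prove, so that route is circular as stated; instead I would use Lemma \ref{lem:perm-cycle-utility}\eqref{item:utility-3} one step down. That is, I would first establish that $\bar{\beta_{9/8}} = j_8(v_2^9)$, viewed as an element of $E_2^{111,1}(S/3)$, is of the form $\bar{z}$ for the permanent cycle $z = \beta_{9/8}\in E_2^{110,2}(S)$, which is a permanent cycle by \cite[Table A3.4]{green} and has order $3$; then apply part \eqref{item:utility-3} with $s = 111$, $f=1$. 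Concretely: $j(\bar{\beta_{9/8}}) = \beta_{9/8}$ by definition of the bar notation (Notation \ref{notation:tilde}) together with the identification $j_8(v_2^9) = \overline{\beta_{9/8}}$ coming from the Geometric Boundary Theorem and the definition of the $\beta$ family as a composite of boundary maps.

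The steps, in order, would be: (1) Identify $j_8(v_2^9)\in E_2^{111,1}(S/3)$ with a class $\bar{\beta_{9/8}}$ satisfying $j(\bar{\beta_{9/8}}) = \beta_{9/8}\in E_2^{110,2}(S)$; this is where one uses that the composite boundary map $E_2(S/(3,v_1^8))\to E_2(S/3)\to E_2(S)$ carries $v_2^9$ to $\beta_{9/8}$, which is the standard Greek-letter/Geometric Boundary Theorem description (cf.\ \cite{MRW}) and the reason $m=8$ was chosen (discussed at the start of Section \ref{sec:v2^9}). (2) Recall from \cite[Table A3.4]{green} that $\beta_{9/8}\in \pi_{110}(S)$ is an essential element of order $3$; this is listed among the permanent cycles in \eqref{eq:beta-perm}. (3) Check the image-of-$i$ hypothesis: one needs $\Im(i: E_2^{111,1}(S)\to E_2^{111,1}(S/3))$ to consist of permanent cycles. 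But $E_2^{111,1}(S)$ lies on the $1$-line, which is the image of $J$ and consists entirely of permanent cycles (as recalled in Section \ref{sec:notation}); in fact $i$ of the $1$-line is spanned by $\alpha_1 v_1^m$, all permanent cycles. So this hypothesis holds automatically. (4) Apply Lemma \ref{lem:perm-cycle-utility}\eqref{item:utility-3} to conclude $\bar{\beta_{9/8}}$ is a permanent cycle in $E_2(S/3)$.

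The main obstacle is step (1): making the identification $j_8(v_2^9) = \overline{\beta_{9/8}}$ precise at the level of cobar representatives, i.e.\ checking that the top-cell boundary of $v_2^9\in E_2(S/(3,v_1^8))$ really is a valid choice of $\overline{\beta_{9/8}}$ in the sense of Notation \ref{notation:tilde}, and in particular that it is nonzero in $E_2^{111,1}(S/3)$. This amounts to recalling the construction of $\beta_{9/8}$: it is represented by applying $\eta_R$ and the connecting homomorphisms to $v_2^9$, so by the Geometric Boundary Theorem the class $j(j_8(v_2^9))\in E_2^{110,2}(S)$ is exactly $\beta_{9/8}$, which is nonzero; hence $j_8(v_2^9)$ is itself nonzero and is a legitimate $\overline{\beta_{9/8}}$. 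Once this bookkeeping is in place, everything else is a direct invocation of results already available in the paper. One remark worth including: the element $\overline{\beta_{9/8}}$ need not be the unique lift of $\beta_{9/8}$ along $j$, but Lemma \ref{lem:perm-cycle-utility}\eqref{item:utility-3} handles the ambiguity (any choice differs by $\Im(i)$, which consists of permanent cycles here), so the conclusion is insensitive to the choice.
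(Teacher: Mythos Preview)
Your argument has a genuine gap at step~(2). The element $\beta_{9/8}$ lives in stem $16\cdot 9 - 4\cdot 8 - 2 = 110$, which is \emph{outside} the range $n\leq 108$ covered by \cite[Table~A3.4]{green}; that table cannot be cited for its survival. Appealing instead to the list \eqref{eq:beta-perm} is circular: the paper explicitly notes (just after \eqref{eq:beta-perm}) that the survival of $\beta_{9/j}$ for $j\leq 8$ is obtained as a consequence of Theorem~\ref{thm:beta}, which in turn rests on Theorem~\ref{thm:v2^9}, which in turn needs this very lemma. So you do not have an independent source for the hypothesis ``$\{\beta_{9/8}\}$ is essential of order~3'' required by Lemma~\ref{lem:perm-cycle-utility}\eqref{item:utility-3}.

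The paper sidesteps this by working one $v_1$-multiple lower. It uses the Arf-invariant class $\beta_{9/9} + c\beta_7\in E_2^{106,2}(S)$, which \emph{is} within range of Ravenel's table and is known there to be a 3-torsion permanent cycle. Applying Lemma~\ref{lem:perm-cycle-utility}\eqref{item:utility-3} (with the same observation you made, that the 1-line of $E_2(S)$ consists of permanent cycles) gives that $j_9(v_2^9) + c\,j_1(v_2^7)\in E_2^{107,1}(S/3)$ is a permanent cycle. Multiplying by $v_1$ and using $v_1\cdot j_1 = 0$ together with Lemma~\ref{lem:delta-switch} (so $v_1\cdot j_9(v_2^9) = j_8(v_2^9)$) then yields the conclusion. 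Your steps~(1), (3), (4) are fine; the fix is to replace step~(2) by this detour through stem~106.
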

\begin{proof}
By \cite[Table A3.4]{green}, there exists $c\in \{ \pm
1 \}$ such that $x_{106}=\beta_{9/9} + c\beta_7\in E_2^{106,2}(S)$ is a 3-torsion
permanent cycle. Lemma \ref{lem:perm-cycle-utility}\eqref{item:utility-3}
applies since $E_2^{*,1}(S)$ consists of permanent cycles; it implies that
$\bar{\beta_{9/9}} + c\bar{\beta_7} = j_9(v_2^9) + cj_1(v_2^7)\in
E_2^{107,1}(S/3)$ is a permanent cycle.
Hence $v_1\cdot (j_9(v_2^9)+cj_1(v_2^7)) = v_1\cdot
j_9(v_2^9) = j_8(v_2^9)$ is a permanent cycle.
\end{proof}

\begin{lemma} \label{lem:d5(v2^9)}
We have $d_5(v_2^9) = 0$ in $E_5^{143,5}(S/(3,v_1^8))$.
\end{lemma}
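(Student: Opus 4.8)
\textbf{Proof proposal for Lemma \ref{lem:d5(v2^9)}.}

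The plan is to show $d_5(v_2^9)=0$ by analyzing where it could possibly land and ruling out every option. The target of a $d_5$ on $v_2^9\in E_5^{144,4}(S/(3,v_1^8))$ lies in $E_5^{143,9}(S/(3,v_1^8))$, but in fact by Fact \ref{fact:sparseness} and the $d_5$-truncation $E_2 = E_5$ it is more convenient to use Lemma \ref{lem:extensions-diff}: the differential $d_5(v_2^9)$ is governed by the hidden $v_1^8$-extension on the permanent cycle $\{j_8(v_2^9)\} = \{\bar{\beta_{9/8}}\}\in \pi_{111}(S/3)$, which by Lemma \ref{lem:delta-perm} is indeed a permanent cycle. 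So first I would invoke Lemma \ref{lem:extensions-diff} with $m=8$ and $y = v_2^9$: if $w\in E_5(S/3)$ detects $v_1^8\cdot\{j_8(v_2^9)\}\in\pi_{143}(S/3)$, then $d_5(v_2^9) = i_8(w)$. Thus it suffices to prove that $v_1^8\cdot\{\bar{\beta_{9/8}}\} = 0$ in $\pi_{143}(S/3)$, or at least that any element of $\pi_{143}(S/3)$ detecting this product maps to $0$ under $i_8$.

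Next I would bring in the two main results from Section \ref{sec:143}. By Proposition \ref{prop:143-only-5}, every element of $\pi_{143}(S/3)$ is detected in Adams-Novikov filtration $\leq 5$, so the detecting element $w$ lies in $E_5^{143,f}(S/3)$ for some $f\leq 5$ — and by Fact \ref{fact:sparseness}\eqref{item:1} (sparseness: $s+f\equiv 0\pmod 4$, with $s=143$) the only possibility in that range is $f=5$, i.e. $w\in E_5^{143,5}(S/3)$. Now $v_1^8\cdot\{\bar{\beta_{9/8}}\}$ is manifestly $v_1$-divisible — indeed $v_1^8$-divisible — in $\pi_*(S/3)$; I would argue that a detecting class $w$ can be chosen to be $v_1^2$-divisible in $E_5^{143,5}(S/3)$ (using that multiplication by $v_1$ on the $E_2$-page of $S/3$ has no kernel/cokernel issues in this degree, or more carefully tracking the $v_1$-Bockstein). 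Then Lemma \ref{lem:143-v1div} says that any $v_1^2$-divisible element of $E_2^{143,5}(S/3)$ is zero, forcing $w=0$, hence $d_5(v_2^9) = i_8(w) = 0$.

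The step I expect to be the main obstacle is the passage from ``$v_1^8\cdot\{\bar{\beta_{9/8}}\}$ is $v_1$-divisible in homotopy'' to ``its detecting class $w$ in $E_2^{143,5}(S/3)$ is $v_1^2$-divisible on the $E_2$-page.'' Divisibility in homotopy need not lift to divisibility on the associated graded in general; I would need to set this up via the $v_1$-Bockstein spectral sequence (or the cofiber sequence $S/3\xrightarrow{v_1^2}S/3\to S/(3,v_1^2)$), showing that $w$, being the leading term of a product $v_1^2\cdot(v_1^6\cdot\{\bar{\beta_{9/8}}\})$, is either itself $v_1^2$-divisible or differs from a $v_1^2$-divisible class by something of higher filtration — but higher filtration than $5$ in stem $143$ is impossible by Proposition \ref{prop:143-only-5} together with sparseness. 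So in fact the filtration constraint does the work: the only contributions to $w$ live in $E_5^{143,5}(S/3)$, and any reasonable bookkeeping of the $v_1$-multiplication places $w$ in the image of $v_1^2$, at which point Lemma \ref{lem:143-v1div} finishes the argument. The remaining routine check is that Lemma \ref{lem:extensions-diff} applies, i.e. that $j_8(v_2^9) = \bar{\beta_{9/8}}$ is a nontrivial permanent cycle in $E_5^{111,1}(S/3)$, which is exactly Lemma \ref{lem:delta-perm}.
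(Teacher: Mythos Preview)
Your approach inverts the paper's logical order and in doing so introduces a genuine gap. You reduce Lemma~\ref{lem:d5(v2^9)} to the vanishing of $v_1^8\cdot\{\bar\beta_{9/8}\}$ in $\pi_{143}(S/3)$ via Lemma~\ref{lem:extensions-diff}; but in the paper that homotopy vanishing is Lemma~\ref{lem:b9/8-extn}, which is \emph{deduced from} Lemma~\ref{lem:d5(v2^9)}, not the other way round. Your independent argument for the vanishing hinges on the claim that the detecting class $w\in E_2^{143,5}(S/3)$ can be chosen $v_1^2$-divisible on the $E_2$-page, and this is where the argument breaks. Divisibility in homotopy does not transfer to divisibility in the associated graded: if $u$ detects $v_1^6\cdot\{\bar\beta_{9/8}\}$ and $v_1^2u$ happens to vanish on $E_\infty$, then $v_1^8\cdot\{\bar\beta_{9/8}\}$ is detected by some class in strictly higher filtration, and there is no mechanism forcing that class to be $v_1^2$-divisible on $E_2$. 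Your appeal to Proposition~\ref{prop:143-only-5} only bounds the filtration of $w$; it says nothing about $v_1$-divisibility of $w$ itself. (Two small side issues: $v_2^9$ lies in $E_5^{144,0}$, not $E_5^{144,4}$, so $d_5$ lands in $E_5^{143,5}$; and sparseness also allows $f=1$, which you should rule out by noting $v_1^8\cdot j_8=0$ on $E_2$.)

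The paper's proof avoids homotopy altogether and works directly on $E_5$. Writing $x=d_5(v_2^9)$, one first uses naturality along $j_8$ and Lemma~\ref{lem:delta-perm} to get $j_8(x)=d_5(j_8(v_2^9))=0$, so $x=i_8(y)$ for some $y\in E_2^{143,5}(S/3)$. The key new idea is to map to $S/(3,v_1^3)$: there $v_2^3$ is already a class in $E_2^{48,0}$, and since $S/(3,v_1^3)$ is a ring spectrum the Leibniz rule gives $d_5((v_2^3)^3)=3v_2^6\,d_5(v_2^3)=0$, so the image $i'_3(x)$ vanishes. A Verdier octahedron identifies $i'_3\circ i_8$ with $i_3$, hence $i_3(y)=0$ and $y$ is $v_1^3$-divisible \emph{on the $E_2$-page}. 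Now Lemma~\ref{lem:143-v1div} forces $y=0$. This gets the needed $E_2$-level divisibility of $y$ algebraically from the cofiber sequence, rather than trying to extract it from a homotopy-level $v_1$-divisibility as you attempt.
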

\begin{proof}
Let $x = d_5(v_2^9)$. We first consider the image of this differential along the
natural map induced by $i'_3: S/(3,v_1^8)\to S/(3,v_1^3)$.
Since $v_2^3$ is an element of $E_2^{48,0}(S/(3,v_1^3))$,
by the Leibniz rule and Theorem \ref{thm:oka-ring}, we have $i'_3(x) =
i'_3(d_5(v_2^9)) = d_5((v_2^3)^3) = 3v_2^6d_5(v_2^3) = 0$.

Using Lemma \ref{lem:delta-perm}, we have
$$ j_8(x)=j_8(d_5(v_2^9)) = d_5(j_8(v_2^9)) = 0$$
in $E_5(S/3) = E_2(S/3)$.
Thus the exact sequence
$$ E_2^{143,5}(S/3)\too{i_8} E_2^{143,5}(S/(3,v_1^8))\too{j_8} E_2^{111,1}(S/3)$$
gives $x = i_8(y)$ for some $y\in E_2^{143,5}(S/3)$.

Consider the commutative diagram of cofiber sequences obtained using Verdier's
axiom:
\begin{equation}\label{eq:verdier} \xymatrix{
S/3\ar[r]^-{v_1^3}\ar[d]_-{i_5} & S/3\ar[r]^-{i_3}\ar[d]^-{i_8} &
S/(3,v_1^3)\ar@{=}[d]
\\S/(3,v_1^5)\ar[r]^-{v_1^3}\ar[d]_-{j_5} & S/(3,v_1^8)\ar[r]^-{i'_3}\ar[d]^-{j_8} &
S/(3,v_1^3)\ar[d]
\\S/3\ar@{=}[r] & S/3\ar[r] & \ast.
}\end{equation}
This implies that $i_3 = i'_3\circ i_8$; in particular, $i_3(y) =
i'_3(i_8(y))=i'_3(x)=0$.
By the long exact sequence corresponding to the top row of \eqref{eq:verdier},
we have that $y$ is $v_1^3$-divisible. By Lemma \ref{lem:143-v1div}, $y=0$.
\end{proof}

\begin{lemma}\label{lem:b9/8-extn}
The product $v_1^8\cdot \{j_8(v_2^9)\}$ is zero in $\pi_*(S/3)$.
\end{lemma}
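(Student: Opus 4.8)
The plan is to argue by contradiction, using the vanishing differential $d_5(v_2^9)=0$ of Lemma \ref{lem:d5(v2^9)}, the filtration bound of Proposition \ref{prop:143-only-5}, and the second author's Lemma \ref{lem:extensions-diff} relating $v_1^m$-extensions to $d_5$ differentials. Write $\eta:=\{j_8(v_2^9)\}\in\pi_{111}(S/3)$ for the homotopy class produced by Lemma \ref{lem:delta-perm}; it is detected in Adams-Novikov filtration $1$ by $\bar{\beta_{9/8}}=j_8(v_2^9)$. Suppose, for contradiction, that $v_1^8\cdot\eta\neq 0$ in $\pi_{143}(S/3)$. The first step is to determine the filtration in which $v_1^8\cdot\eta$ is detected. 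The composite of $j_8$ with multiplication by $v_1^8$ vanishes on $E_2$-pages, since these are consecutive maps in the long exact sequence of $\Ext$ groups associated to $0\to BP_*/3\too{v_1^8} BP_*/3\to BP_*/(3,v_1^8)\to 0$; equivalently $v_1^8\cdot j_8(v_2^9)=j_8(v_1^8 v_2^9)=0$ since $v_1^8$ acts by zero on $E_2(S/(3,v_1^8))$. Because the $v_1^8$-self-map on $S/3$ is detected by $v_1^8\in E_2^{32,0}(S/3)$, the leading term of $v_1^8\cdot\eta$ in filtration $1$ is $v_1^8\cdot j_8(v_2^9)=0$, so by sparseness (Fact \ref{fact:sparseness}) $v_1^8\cdot\eta$ is detected in filtration $\geq 5$; Proposition \ref{prop:143-only-5} then forces it to be detected in filtration exactly $5$, by a nonzero class $w\in E_2^{143,5}(S/3)=E_5^{143,5}(S/3)$.

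Next I would apply Lemma \ref{lem:extensions-diff} with $m=8$ and $y=v_2^9$: since $j_8(v_2^9)$ is a nontrivial permanent cycle and $w$ detects $v_1^8\cdot\{j_8(v_2^9)\}$, the lemma gives $d_5(v_2^9)=i_8(w)$ in $E_5(S/(3,v_1^8))$. But $d_5(v_2^9)=0$ by Lemma \ref{lem:d5(v2^9)}, so $i_8(w)=0$. The long exact sequence associated to $S/3\too{v_1^8}S/3\too{i_8}S/(3,v_1^8)$, which is exact on $E_2=E_5$ by sparseness, shows that $w$ lies in the image of $v_1^8: E_2^{111,5}(S/3)\to E_2^{143,5}(S/3)$; in particular $w$ is $v_1^2$-divisible, so $w=0$ by Lemma \ref{lem:143-v1div}. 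This contradicts $w\neq 0$, and hence $v_1^8\cdot\eta=0$.

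The step I expect to be the main obstacle is the first one: establishing that the leading term of the homotopy product $v_1^8\cdot\eta$ in filtration $1$ is the algebraic product $v_1^8\cdot j_8(v_2^9)$, and that this product vanishes in $E_2^{143,1}(S/3)$, which is where the exact sequence of $\Ext$ groups and the detection of the $v_1^8$-self-map enter. Everything after that is a routine chain of naturality and exactness manipulations using results already established above, and I do not anticipate further difficulties.
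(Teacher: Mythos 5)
Your proposal is correct and follows essentially the same route as the paper: the same filtration analysis via Proposition \ref{prop:143-only-5} and Fact \ref{fact:sparseness}, the vanishing of the algebraic product $v_1^8\cdot j_8(v_2^9)$ from the long exact sequence, Lemma \ref{lem:extensions-diff} (with Lemma \ref{lem:delta-perm} supplying its hypothesis) combined with Lemma \ref{lem:d5(v2^9)} to get $i_8(w)=0$, and then $v_1^8$-divisibility plus Lemma \ref{lem:143-v1div} to conclude $w=0$. The only cosmetic difference is that you argue by contradiction where the paper argues directly, and the multiplicativity point you flag as a possible obstacle is exactly the standard fact the paper also uses without comment.
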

\begin{proof}
Let $w\in E_5^{143,f}(S/3)$ be a representative of
$v_1^8\cdot \{j_8(v_2^9)\}\in \pi_{143}(S/3)$.
By Proposition \ref{prop:143-only-5}, we have $f\leq 5$, and by Fact
\ref{fact:sparseness}, the only possibilities are $f=1,5$. The product
$v_1^8\cdot j_8(v_2^9)$ is zero on the $E_2$ page, so we must have $f=5$.

By Lemma \ref{lem:delta-perm}, Lemma \ref{lem:extensions-diff} applies to
$v_2^9$; combining this with Lemma \ref{lem:d5(v2^9)}, we have
$$ 0 = d_5(v_2^9) = i_8(w) $$
in $E_5^{143,5}(S/(3,v_1^8)) = E_2^{143,5}(S/(3,v_1^8))$. Thus $w$ is divisible
by $v_1^8$ in $E_2^{143,5}(S/3)$. By Lemma \ref{lem:143-v1div}, we must have $w=0$ in
$E_5^{143,5}(S/3)$. Since $w$ was defined to be an element detecting $v_1^8\cdot
\{ j_8(v_2^9) \}$, this product is zero in homotopy.
\end{proof}

\begin{theorem}\label{thm:v2^9}
The element $v_2^9\in E_2^{144,0}(S/(3,v_1^8))$ is a permanent cycle in the Adams-Novikov
spectral sequence computing $\pi_*(S/(3,v_1^8))$.
\end{theorem}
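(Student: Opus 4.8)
The goal is to upgrade the facts already assembled about $v_2^9\in E_2^{144,0}(S/(3,v_1^8))$ into the statement that it survives the spectral sequence entirely. We already know from Lemma \ref{lem:d5(v2^9)} that $d_5(v_2^9)=0$, so by Fact \ref{fact:sparseness}\eqref{item:2} (which gives $E_2 = E_5$, hence the first possibly-nonzero differential is $d_5$, and after that the next is $d_9$) the element survives to $E_9$ and the only remaining obstruction is a possible $d_9(v_2^9)\in E_9^{143,9}(S/(3,v_1^8))$ (and then $d_{13}$, $d_{17}$, etc., landing in stem $143$ at filtrations $9,13,17,\dots$). So the strategy is: show every possible target group on the $E_9$-and-later pages, in stem $143$ of $S/(3,v_1^8)$, is already zero by the time the relevant differential could hit it. This is exactly what the work of Section \ref{sec:143} was designed to provide, transported from $S/3$ up to $S/(3,v_1^8)$.

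\textbf{Key steps, in order.} First I would translate Proposition \ref{prop:143-only-5} from $S/3$ to $S/(3,v_1^8)$: the long exact sequence $E_r(S/3)\overset{i_8}\to E_r(S/(3,v_1^8))\overset{j_8}\to E_r(S/3)$ (with a degree shift $j_8:E^{s,f}\to E^{s-33,f+1}$) lets one bound $E_\infty^{143,f}(S/(3,v_1^8))$ for $f>5$ in terms of $E_\infty^{143,f}(S/3)$ and $E_\infty^{110,f-1}(S/3)$. Proposition \ref{prop:143-only-5} says both of these vanish for $f-1\geq 5$, i.e.\ for $f\geq 6$; combined with Fact \ref{fact:sparseness}, the surviving filtrations in $E_\infty^{143,*}(S/(3,v_1^8))$ are among $\{1,5\}$. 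Second, and this is the crux, I would argue that this forces $v_2^9$ to be a permanent cycle rather than merely to have permanent-cycle \emph{representatives} available: any differential $d_r(v_2^9)$ with $r\geq 9$ lands in $E_r^{143,f}(S/(3,v_1^8))$ with $f\geq 9$, and I want to conclude each such group is already zero on the page where the differential would occur. The cleanest route is to run the $S/3$-to-$S/(3,v_1^8)$ comparison not just on $E_\infty$ but page by page: one shows inductively that the $d_5$'s killing the $f\geq 9$ part of $E_2^{143,*}(S/3)$ (established in Proposition \ref{prop:143-only-5}) lift, via Lemma \ref{lem:d5}-type naturality for $i_8$ and $j_8$, to differentials in $S/(3,v_1^8)$ clearing $E_6^{143,f}(S/(3,v_1^8))$ for $f\geq 9$. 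Then no room remains for $d_r(v_2^9)$, $r\geq 9$, to be nonzero, and $v_2^9$ is a permanent cycle.

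\textbf{Alternative / supplementary argument.} There is also a more structural route that sidesteps tracking every high-filtration class: use Lemma \ref{lem:delta-perm}, which says $j_8(v_2^9)\in E_2^{111,1}(S/3)$ is a permanent cycle, together with Lemma \ref{lem:b9/8-extn}, which says $v_1^8\cdot\{j_8(v_2^9)\}=0$ in $\pi_*(S/3)$. By the long exact sequence in homotopy $\pi_*(S/3)\overset{v_1^8}\to\pi_*(S/3)\overset{i_8}\to\pi_*(S/(3,v_1^8))\overset{j_8}\to\pi_{*-1}(S/3)$, the class $\{j_8(v_2^9)\}$, being $v_1^8$-torsion, lifts to an actual homotopy class $\xi\in\pi_{144}(S/(3,v_1^8))$ with $j_8(\xi)=\{j_8(v_2^9)\}$. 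Since $j_8$ respects Adams-Novikov filtration and $\{j_8(v_2^9)\}$ has filtration $1$ (after the shift, coming from $E^{111,1}$), $\xi$ is detected in filtration $\leq 1$ of $\pi_{144}(S/(3,v_1^8))$; by Fact \ref{fact:sparseness} and the fact that $v_2^9$ itself lives in filtration $0$, one then checks $\xi$ (or $\xi$ corrected by a filtration-$0$ class, i.e.\ by an element of the $v_1$-tower) is detected precisely by $v_2^9$. This is the same bookkeeping as in Lemma \ref{lem:perm-cycle-utility}, applied with the roles of $i,j$ played by $i_8,j_8$, except that here the relevant hypothesis ``$v_1^8\cdot\{j_8(v_2^9)\}=0$'' is supplied by Lemma \ref{lem:b9/8-extn} rather than being an order-$3$ statement.

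\textbf{Main obstacle.} The delicate point is not producing \emph{some} permanent cycle mapping correctly under $j_8$ — that follows easily from Lemma \ref{lem:b9/8-extn} — but pinning down that the permanent cycle one produces is $v_2^9$ on the nose (up to the inevitable ambiguity of adding bottom-cell classes $i_8(E_2^{144,0}(S/3)) = i_8(\F_3\{v_1^{36}\})$ or higher-filtration bottom-cell classes). Equivalently: one must rule out that $v_2^9$ differs from an honest permanent cycle by a nonzero class that itself supports a differential. Here one uses that any such correcting class would lie in $\ker(j_8)\cap E_2^{144,f}$ for $f\geq 0$, hence in $\Im(i_8)$, i.e.\ it is a bottom-cell class; and the bottom cell $E_2^{*,*}(S/3)$ in stem $144$, filtration $0$, is the permanent cycle $v_1^{36}$ (the $0$-line is $\F_3[v_1]$), while higher-filtration bottom-cell classes in stem $144$ are handled by the same Section \ref{sec:143}-style input showing $E_\infty^{144,f}(S/3)=0$ for the relevant $f$. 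Once the correcting ambiguity is seen to consist entirely of permanent cycles, $v_2^9$ must be one too. I expect the bulk of the write-up to be this filtration bookkeeping across the $i_8$/$j_8$ long exact sequences, with the genuinely new content already isolated in Lemmas \ref{lem:delta-perm}, \ref{lem:d5(v2^9)}, and \ref{lem:b9/8-extn}.
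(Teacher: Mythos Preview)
Your ``alternative/supplementary argument'' is precisely the paper's proof: it is a one-line application of Lemma~\ref{lem:perm-cycle-utility}\eqref{item:utiliy-v1} to $x=v_2^9$, with Lemma~\ref{lem:delta-perm} and Lemma~\ref{lem:b9/8-extn} supplying the first two hypotheses and the observation $E_2^{144,0}(S/3)=\F_3\{v_1^{36}\}$ (a permanent cycle) supplying the third. Your ``main obstacle'' is also simpler than you make it: since $j_8$ raises filtration by one and $\{j_8(v_2^9)\}$ is detected in filtration~$1$, the lift $\xi$ is detected in filtration exactly~$0$, so the correcting class $y$ lives in $E_2^{144,0}(S/(3,v_1^8))$ and only the filtration-$0$ image of $i_8$ matters---no higher-filtration bookkeeping or appeal to $E_\infty^{144,f}(S/3)$ for $f>0$ is needed.

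Your primary approach, by contrast, has real gaps. First, there is no long exact sequence of $E_r$ pages for $r>2$ (nor on $E_\infty$), so you cannot bound $E_\infty^{143,f}(S/(3,v_1^8))$ or $E_r^{143,f}(S/(3,v_1^8))$ by the corresponding groups for $S/3$ as you propose. Second, even on $E_2$ the long exact sequence requires control of $E_2^{110,*}(S/3)$ (the target of $j_8$, which lands in filtration $f+1$, not $f-1$), and Section~\ref{sec:143} says nothing about stem~$110$. Third, not every class in $E_2^{143,\geq 9}(S/3)$ dies by a $d_5$: in the proof of Proposition~\ref{prop:143-only-5} one of the filtration-$13$ classes is killed only by a $d_9$, so your claim that lifted $d_5$'s ``clear $E_6^{143,f}(S/(3,v_1^8))$ for $f\geq 9$'' already fails for $S/3$ itself. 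The route through Lemma~\ref{lem:perm-cycle-utility} sidesteps all of this, which is why the paper takes it.
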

\begin{proof}
This will follow from applying Lemma
\ref{lem:perm-cycle-utility}\eqref{item:utiliy-v1} to $v_2^9$. The first two
hypotheses of that lemma are satisfied due to Lemma \ref{lem:delta-perm} and
Lemma \ref{lem:b9/8-extn}. For the last hypothesis, note that $E_2^{144,0}(S/3)$
is generated by $v_1^{36}$, which is a permanent cycle.
\end{proof}

\begin{corollary}\label{cor:self-map}
For $2\leq m\leq 8$,
the class $v_2^9\in \pi_{144}(S/(3,v_1^m))$ lifts to a class $v_2^9\in
[S/(3,v_1^m), S/(3,v_1^m)]_{144}$.
\end{corollary}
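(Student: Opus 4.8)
My plan rests on the observation that, granting Theorem~\ref{thm:v2^9}, this corollary is a formal consequence: once the homotopy class $v_2^9\in\pi_{144}(S/(3,v_1^m))$ is in hand, promoting it to a self-map requires no obstruction theory, because $S/(3,v_1^m)$ is a ring spectrum for $m\geq 2$. So the plan has three steps: produce the homotopy class for each $m$ in range; invoke Oka's ring structure; and multiply.

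\emph{The homotopy class.} For $2\leq m\leq 8$, I would apply Verdier's axiom to the factorization $v_1^8 = v_1^m\circ v_1^{8-m}$ of the $v_1$-self-map of $S/3$ --- exactly as in diagram~\eqref{eq:verdier}, which is the case $m=3$ --- to obtain a natural map $i'_m\colon S/(3,v_1^8)\to S/(3,v_1^m)$ which on $BP$-homology is the quotient $BP_*/(3,v_1^8)\onto BP_*/(3,v_1^m)$. In particular $i'_m$ carries the cobar $0$-cocycle $v_2^9$ (which, as recalled at the start of Section~\ref{sec:v2^9}, is an element of $E_2^{144,0}(S/(3,v_1^k))$ for every $k\leq 9$) to the cobar $0$-cocycle $v_2^9$. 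Since $i'_m$ induces a map of Adams--Novikov spectral sequences commuting with differentials and a filtration-$0$ class is never the target of a differential, a short induction on $r$ shows that $v_2^9\in E_2^{144,0}(S/(3,v_1^m))$ is a nonzero permanent cycle; I would let $v_2^9\in\pi_{144}(S/(3,v_1^m))$ denote a homotopy class it detects (equivalently, take $(i'_m)_*$ of the class produced by Theorem~\ref{thm:v2^9}).

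\emph{The ring structure and the product.} By Oka's Theorem~\ref{thm:oka-ring}, for $m\geq 2$ the spectrum $X := S/(3,v_1^m)$ is a homotopy-associative unital ring spectrum, with unit $\iota\colon S\to X$ (the inclusion of the bottom cell) and multiplication $\mu\colon X\wedge X\to X$. Representing $v_2^9\in\pi_{144}(X)$ by a map $v\colon\Sigma^{144}S\to X$, I would set
$$ \widetilde{v}\colon \Sigma^{144}X = \Sigma^{144}S\wedge X \ttoo{\,v\wedge\mathrm{id}\,} X\wedge X \ttoo{\,\mu\,} X, $$
an element of $[S/(3,v_1^m),S/(3,v_1^m)]_{144}$. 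The right-unit axiom $\mu\circ(\mathrm{id}\wedge\iota)\simeq\mathrm{id}$ yields $\widetilde{v}\circ\Sigma^{144}\iota\simeq v$, so $\widetilde{v}$ restricts on the bottom cell of $\Sigma^{144}X$ to the class $v_2^9$; that is, $\widetilde{v}$ is the asserted lift. It is nonzero because its restriction to the bottom cell is the essential class $v_2^9$ of the previous step.

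I do not expect a real obstacle. The step that ordinarily causes trouble --- extending a map defined on the bottom cell of $\Sigma^{144}X$ over the three remaining cells --- is sidestepped completely by multiplying with the ring structure, which for $m\geq 2$ is supplied by Oka; this is precisely why $m=1$ (where $S/(3,v_1)$ is not a ring spectrum) requires the genuine obstruction-theoretic work of Behrens--Pemmaraju~\cite{behrens-pemmaraju}, and why $m=9$ needs a correction term for $v_2^9$ as in Remark~\ref{rmk:m=9}. The only residual care is bookkeeping: checking that the classes named ``$v_2^9$'' in $\pi_{144}(S/(3,v_1^m))$ for varying $m$ agree under the maps $i'_m$, which is immediate from the naturality in the first step. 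As a bonus, $\widetilde{v}$ induces multiplication by $v_2^9$ on $BP_*(X)=BP_*/(3,v_1^m)$, so it is genuinely a $v_2^9$-self-map and not merely a nonzero one.
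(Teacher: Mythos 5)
Your proposal is correct and follows essentially the same route as the paper: push the permanent cycle of Theorem \ref{thm:v2^9} forward along the natural (Verdier-type) map $S/(3,v_1^8)\to S/(3,v_1^m)$, then use Oka's ring structure to extend the resulting homotopy class to a self-map via $\Sigma^{144}S\wedge X\xrightarrow{v\wedge \mathrm{id}}X\wedge X\xrightarrow{\mu}X$. The paper's proof is just a terser version of this, and your extra remarks (the right-unit check and the induced $v_2^9$-multiplication on $BP$-homology) are harmless elaborations.
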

\begin{proof}
Naturality of the map $S/(3,v_1^8)\to S/(3,v_1^m)$ for $m\leq 8$ means that
Theorem \ref{thm:v2^9} directly implies $v_2^9\in E_2^{144,0}(S/(3,v_1^m))$ is a
permanent cycle. By \cite[Theorem 6.1]{oka-ring}, $R=S/(3,v_1^m)$ is a
(homotopy) ring spectrum for $m\geq 2$. Thus the desired self map may be obtained as
\begin{align*}
R & \to S\sm R \too{v_2^9\sm I} R\sm R\too{\mu} R. \qedhere
\end{align*}
\end{proof}

\begin{remark}\label{rmk:m=9}
Essentially the same argument shows that $v_2^9\pm v_1^8v_2^7$
is a permanent cycle in $E_2^{144,0}(S/(3,v_1^9))$. In the proof of Lemma
\ref{lem:delta-perm} we show $j_9(v_2^9)\pm j_1(v_2^7) = j_9(v_2^9\pm
v_1^8v_2^7)$ is a permanent cycle. The proofs of Lemmas \ref{lem:d5(v2^9)} and
\ref{lem:b9/8-extn} go
through without modification to show that $d_5(v_2^9\pm v_1^8v_2^7) = 0$ in
$E_5^{143,5}(S/(3,v_1^9))$ and $v_1^9\cdot \{ j_9(v_2^9\pm v_1^8v_2^7) \}=0$.
In the proof of Theorem \ref{thm:v2^9}, we have $x = c_1(v_2^9\pm v_1^8v_2^7) +
c_2v_2^9 + c_3v_1^{36}$. This time, $v_2^9$ and $v_1^8v_2^7$ are not permanent
cycles since $\beta_{9/9}=j(j_9(v_2^9))$ and $\beta_7=j(j_9(v_1^8v_2^7))$ are
not permanent cycles, and $v_1^{36}$ is a permanent cycle. Thus $v_2^9\pm
v_1^8v_2^7$ is a permanent cycle for some choice of sign.
\end{remark}

\section{Survival of beta elements}\label{sec:betas}
Our goal in this section is to prove that several infinite families of $\beta_{a/b}$
elements are permanent cycles in the Adams-Novikov spectral sequence for the
sphere. For indices $a,b$ satisfying the
conditions in \cite[Theorem 2.6]{MRW}, Miller, Ravenel, and Wilson define cycles
$\beta_{a/b}$ in $E_2^{*,2}(S)$ as the image of certain
classes in $E_2^{*,0}(S/(3,v_1^b))$ under the composition $j\circ j_b$.
In this section we will only consider classes $\beta_{sp^n/b}$ (with $p\nmid s$)
such that $b\leq p^n$, which enables us to use the equivalent, but
simpler, definition
$$ \beta_{a/b} = j(j_b(v_2^a))\in E_2^{16a-4b-2,2}(S) $$
(at $p=3$). These elements are defined using the boundary maps $j$ and $j_b$ on
$\Ext$ associated to the short exact sequences $BP_* \too{3} BP_* \to BP_*/3$
and $BP_*/3 \too{v_1^b} BP_*/3 \to BP_*/(3,v_1^b)$; by the Geometric Boundary
Theorem these coincide with the maps induced on Adams-Novikov spectral sequences
by the maps $j$ and $j_b$ of spectra that we have been considering
in this paper. Recall the convention that $\beta_a := \beta_{a/1}$.

Suppose $\beta_{a/b}$ is a permanent cycle with $b\leq 8$, and in addition, suppose that the
corresponding element in homotopy $\beta^h_{a/b}\in \pi_*(S)$ factors as
\begin{equation}\label{eq:factorization}
\beta^h_{a/b} : S\ttoo{B_{a/b}} S/(3,v_1^b)\ttoo{jj_b} S
\end{equation}
for some $B_{a/b}\in \pi_*(S/(3,v_1^b))$.
In this case,
for $t\geq 1$, Corollary \ref{cor:self-map} allows us to define
elements in $\pi_*(S)$:
$$ \beta^h_{9t+a/b}: S\ttoo{B_{a/b}} S/(3,v_1^b) \ttoo{(v_2^9)^t}
S/(3,v_1^b)\ttoo{jj_b} S. $$
We warn that existence of a factorization \eqref{eq:factorization} is not
automatic, even if $\beta_{a/b}$ is a permanent cycle and such a factorization
exists on the level of Adams-Novikov $E_2$ pages.
Our goal is to show the following, proved at the end of the section.

\begin{theorem}\label{thm:beta}
For all $t\geq 0$, the classes
\begin{align*}
\beta_{9t+3/j}  & \text{ for }j=1,2
\\\beta_{9t+6/j} &\text{ for } j=1,2,3
\\\beta_{9t+9/j} & \text{ for } j=1,\dots,8
\\\alpha_1\beta_{9t+3/3} & 
\\\alpha_1\beta_{9t+7} & 
\end{align*}
are permanent cycles in
the Adams-Novikov spectral sequence for the sphere.
\end{theorem}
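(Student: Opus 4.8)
The plan is to realize each $\beta$ family in the list as arising from the $v_2^9$ self map on $S/(3,v_1^b)$ for an appropriate $b\leq 8$, following the strategy set up at the start of this section. Concretely, for a given $\beta_{a/b}$ (or $\alpha_1\beta_{a/b}$) in the base case $t=0$, I would first establish that the relevant class $v_2^a$ (resp.\ $\alpha_1v_2^a$) in $E_2^{*,0}(S/(3,v_1^b))$ or $E_2^{*,1}(S/(3,v_1^b))$ is a permanent cycle, then use Corollary~\ref{cor:self-map} to multiply by $(v_2^9)^t$, and finally push forward along $jj_b$ to land in $\pi_*(S)$ with Adams-Novikov representative $\beta_{9t+a/b}$ (resp.\ $\alpha_1\beta_{9t+3/3}$, $\alpha_1\beta_{9t+7}$). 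Since Theorem~\ref{thm:v2^9} gives the self map $v_2^9$ on $S/(3,v_1^m)$ for $2\leq m\leq 8$ (and the naturality of $S/(3,v_1^8)\to S/(3,v_1^m)$), the post-composition step is formal once the base permanent cycles are in hand; the $\beta_{9t+9/j}$ family needs $b=8$ while the others only need $b\leq 4$, as indicated in the introduction.

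The main work, then, is the base cases. Following the outline in the introduction, I would show that $v_1^2v_2^3$, $v_1v_2^6$, and $\alpha_1v_1v_2^3$ are permanent cycles in $E_2(S/(3,v_1^4))$, and that $\alpha_1v_1v_2^7$ is a permanent cycle in $E_2(S/(3,v_1^2))$; applying Lemma~\ref{lem:delta-switch} these give the classes whose $j_b$-images are $\beta_{3/2}$, $\beta_{6/3}$, $\alpha_1\beta_{3/3}$, and $\alpha_1\beta_7$ respectively (after one more application of $j$). For each of these I would invoke Lemma~\ref{lem:perm-cycle-utility}\eqref{item:utiliy-v1}: one checks that the $j_m$-image is a permanent cycle (typically because it lies in low filtration over $S/3$, or reduces to a known permanent cycle over the sphere such as $\beta_{3/3}$, $\beta_{6/3}$, $\beta_7+c\beta_{9/9}$ from \eqref{eq:beta-perm}), that the appropriate $v_1$-power multiple of that homotopy class vanishes in $\pi_*(S/3)$ (a $v_1$-extension computation in a tractable stem, aided by the sparseness of Fact~\ref{fact:sparseness} and the facts in Lemma~\ref{lem:b6}), and that the relevant image of $i_m$ consists of permanent cycles (usually automatic since those degrees are on or near the $0$-line, which is $\F_3[v_1]$, or in low filtration). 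The families $\beta_{9t+3}$, $\beta_{9t+6}$, $\beta_{9t+9/j}$ for small $j$ are then obtained either directly from $v_2^3, v_2^6, v_2^9$ over $S/(3,v_1)$ (recovering Behrens--Pemmaraju) or by the same mechanism over $S/(3,v_1^b)$ with larger $b$, and $\beta_{9t+9/j}$ for $j=1,\dots,8$ comes from $v_2^9$ on $S/(3,v_1^j)$ via Corollary~\ref{cor:self-map} composed with $jj_j$, using that $\beta_{9/j}$ for $j\leq 8$ is already known to be a nonzero permanent cycle.

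I expect the principal obstacle to be the $v_1$-extension computations feeding Lemma~\ref{lem:perm-cycle-utility}\eqref{item:utiliy-v1}: one must verify, for instance, that $v_1^4\cdot\{\beta_{6/3}\text{-type class}\}=0$ in $\pi_*(S/3)$, and similar statements for the other three base classes. These are exactly the analogues, in much lower stems, of Lemma~\ref{lem:b9/8-extn}, and should be accessible by the same combination of the long exact sequences \eqref{eq:E2-LES}, sparseness, the differentials tabulated in Table~\ref{tab:anss}, and Lemma~\ref{lem:d5}; the relevant stems (around $38$, $86$, $46$, $102$) are within the range where $E_2(S)$ is completely understood, so no further computer input beyond Section~\ref{sec:143} should be needed. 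A secondary subtlety is bookkeeping with the degree shifts and with Lemma~\ref{lem:delta-switch}, ensuring that the composite $jj_b$ applied to $(v_2^9)^t$ times the base class really has Adams-Novikov representative $\beta_{9t+a/b}$ (and not merely $\beta_{9t+a/b}$ plus lower-filtration correction), which follows from the fact that $j$ and $j_b$ on $E_2$ coincide with the algebraic boundary maps defining the $\beta$ elements, together with Fact~\ref{fact:sparseness} to rule out filtration jumps.
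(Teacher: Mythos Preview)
Your overall strategy is exactly the paper's: establish that $v_1^2v_2^3$, $v_1v_2^6$, $\alpha_1v_1v_2^3\in E_2(S/(3,v_1^4))$ and $\alpha_1v_1v_2^7\in E_2(S/(3,v_1^2))$ are permanent cycles, multiply by $v_2^{9t}$ using the ring structure and Theorem~\ref{thm:v2^9}, then push forward along $j\circ j_m$ (with Lemma~\ref{lem:delta-switch} for the bookkeeping). The $\beta_{9t+9/j}$ family follows directly from $v_2^{9(t+1)}$ over $S/(3,v_1^8)$ and $v_1$-padding, without needing prior knowledge of $\beta_{9/j}$.

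There is one genuine gap in your plan, concerning $\alpha_1v_1v_2^7$. You propose to treat it uniformly via Lemma~\ref{lem:perm-cycle-utility}\eqref{item:utiliy-v1}, and you assert that the relevant $v_1$-extension checks live ``within the range where $E_2(S)$ is completely understood'' and require no computer input. But $j_2(\alpha_1v_1v_2^7)$ lies in $E_2^{110,2}(S/3)$, and the $v_1^2$-extension question lands in $\pi_{118}(S/3)$; both are beyond Ravenel's tables, which stop at stem~$108$. (Your quoted stems ``$46$, $102$'' for the last two base cases are also off; the classes sit near stems $37$--$54$ and $110$--$119$.) The paper does \emph{not} use Lemma~\ref{lem:perm-cycle-utility} here. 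Instead it writes $\alpha_1v_1v_2^7 = (v_1v_2^5)\cdot(\alpha_1v_2^2)$ in the ring spectrum $S/(3,v_1^2)$, notes that $v_1v_2^5$ is a permanent cycle by Oka's result on $v_2^5$ over $S/(3,v_1)$, and then computes directly that the only candidate differential on the product is a $d_5$ whose target is $v_1^2$-divisible, hence zero in $E_5(S/(3,v_1^2))$. This sidesteps any stem-$>108$ homotopy computation and keeps everything within classically known $E_2$ data in stems $\leq 35$. Your approach for the other three base cases (via Lemmas~\ref{lem:beta32bar*v1}, \ref{lem:6/3}, \ref{lem:av2^3} feeding into Lemma~\ref{lem:perm-cycle-utility}) is what the paper does.
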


Since $\beta_{3/3}$ and $\beta_7$ support Adams-Novikov differentials, none
of the families in
Theorem \ref{thm:beta} are trivially multiplicative consequences of a
different family. Instead, we have $\alpha_1\beta_{3/3}\in
\an{\alpha_1,\alpha_1,\beta_1^3}$ and $\alpha_1\beta_7\in
\an{\alpha_1,\alpha_1,\beta_1^2\beta_{6/3}}$.
As we will see in Section \ref{sec:tmf}, the families $\alpha_1\beta_{9t+3/3}$,
$\beta_{9t+6/3}$, and $\alpha_1\beta_{9t+7}$ have nontrivial image in
$\pi_*\tmf$, along with the family $\beta_{9t+1}$ constructed in
\cite[Corollary 1.2]{behrens-pemmaraju}.

\begin{lemma}\label{lem:beta32bar*v1}
The class $j_4(v_1^2v_2^3)\in E_2^{39,1}(S/3)$ is a permanent cycle such that
$j(j_4(v_1^2v_2^3)) = \beta_{3/2}\in E_2^{38,2}(S)$ and
$v_1^4\cdot \{j_4(v_1^2v_2^3)\}=0$ in $\pi_*(S/3)$.
\end{lemma}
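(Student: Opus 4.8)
The plan is to follow the general strategy encapsulated in Lemma \ref{lem:perm-cycle-utility}\eqref{item:utiliy-v1}, applied with $X = S/(3,v_1^4)$ and $x = j_4(v_1^2v_2^3) = v_1^2v_2^3 \in E_2^{39,1}(S/3)$ viewed via the boundary map $j_4$. Wait---more precisely, the element in question is already in $E_2(S/3)$, so the three claims to verify are: (a) it is a permanent cycle; (b) $j(v_1^2v_2^3)=\beta_{3/2}$; and (c) $v_1^4\cdot\{v_1^2v_2^3\}=0$ in $\pi_*(S/3)$. I expect (a) to follow from Lemma \ref{lem:perm-cycle-utility}\eqref{item:utility-3} applied to an element of $E_2^{*,*}(S/3)$ whose $j$-image is $\beta_{3/2}\in E_2^{38,2}(S)$: since $\beta_{3/2}$ is a permanent cycle (it appears in \eqref{eq:beta-perm}) of order $3$, and $\Im(i:E_2^{39,1}(S)\to E_2^{39,1}(S/3))$ consists of permanent cycles because the $1$-line of $E_2(S)$ consists of permanent cycles (images of $J$), the hypotheses of Lemma \ref{lem:perm-cycle-utility}\eqref{item:utility-3} are met with $f=1\leq 3$. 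This gives that any lift of $\beta_{3/2}$ along $j$ is a permanent cycle; so I first need to check that $v_1^2v_2^3$ is such a lift.

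For claim (b), the identity $j(v_1^2v_2^3)=\beta_{3/2}$ should be essentially a bookkeeping matter combining Lemma \ref{lem:delta-switch} with the definition $\beta_{a/b}=j(j_b(v_2^a))$ from Section \ref{sec:betas}. Indeed, $\beta_{3/2}=j(j_2(v_2^3))$, and by Lemma \ref{lem:delta-switch} with $m=2$, $k=2$, $x=v_2^3\in E_2(S/(3,v_1^2))$ we get $j_2(v_2^3)=j_4(v_1^2v_2^3)$ (noting $v_1^2v_2^3\in E_2(S/(3,v_1^4))$ since $v_2^3$ survives mod $(3,v_1^3)$, hence $v_1^2v_2^3$ survives mod $(3,v_1^4)$ — actually one should double-check $v_1^2 v_2^3$ is a cocycle mod $v_1^4$, which follows from $\eta_R(v_2^3)\equiv v_2^3 \pmod{(3,v_1^3)}$, so $\eta_R(v_1^2v_2^3)\equiv v_1^2v_2^3 \pmod{(3,v_1^5)}$, in particular mod $v_1^4$). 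Therefore $\beta_{3/2}=j(j_4(v_1^2v_2^3))$ as claimed, and simultaneously this shows $v_1^2v_2^3$ is a lift of $\beta_{3/2}$ along $j$, closing claim (a).

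Claim (c), the vanishing of the $v_1^4$-extension, is the main obstacle and will be handled exactly as in Lemma \ref{lem:b9/8-extn}. The plan: let $w\in E_5^{43,f}(S/3)$ detect $v_1^4\cdot\{j_4(v_1^2v_2^3)\}\in\pi_{43}(S/3)$. By sparseness (Fact \ref{fact:sparseness}) and the fact that the product vanishes on the $E_2$ page, $w$ must have filtration $f\geq 5$ — in fact one needs a bound forcing $f$ small enough that $w$ lives in a controlled range. Since the $43$-stem is in a very tractable range (well below $108$), the relevant $E_2^{43,f}(S/3)$ groups for $f\geq 5$ can be read off directly from the chart in \cite{guozhen-data} or from \cite[Table A3.4]{green}; I expect these groups to be zero or at least to contain no $v_1^4$-divisible classes, so that $w=0$. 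Here Lemma \ref{lem:extensions-diff} (with $m=4$, $y=v_1^2v_2^3\in E_5(S/(3,v_1^4))$, whose $j_4$-image is the permanent cycle $j_4(v_1^2v_2^3)$) combined with a computation $d_5(v_1^2v_2^3)=0$ in $E_5^{43,5}(S/(3,v_1^4))$ would give $i_4(w)=0$, whence $w$ is $v_1^4$-divisible in $E_2^{43,5}(S/3)$, forcing $w=0$ by a low-dimensional version of Lemma \ref{lem:143-v1div} (i.e., $E_2^{35,5}(S/3)=0$, which holds for degree reasons in this tractable range). The sub-obstacle is establishing $d_5(v_1^2v_2^3)=0$ in $E_5(S/(3,v_1^4))$: this should follow the template of Lemma \ref{lem:d5(v2^9)}, using that $v_2^3$ is a permanent cycle mod $(3,v_1^3)$ (pushing forward along $S/(3,v_1^4)\to S/(3,v_1^3)$ kills the differential there since $v_2^3\in E_2^{48,0}(S/(3,v_1^3))$ is literally a cycle whose $d_5$ lands in a group one can identify), together with $j_4(d_5(v_1^2v_2^3))=d_5(j_4(v_1^2v_2^3))=0$ from claim (a), and then a $v_1$-divisibility argument à la the Verdier-axiom diagram \eqref{eq:verdier} to conclude the error term vanishes. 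Since everything happens well inside the range where $E_2(S/3)$ and $E_2(S/(3,v_1^m))$ are explicitly known, I expect no genuine difficulty, only careful degree bookkeeping.
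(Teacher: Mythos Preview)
Your handling of claims (a) and (b) is correct and essentially matches the paper: the identity $j_4(v_1^2v_2^3)=j_2(v_2^3)$ via Lemma~\ref{lem:delta-switch} gives $j(j_4(v_1^2v_2^3))=\beta_{3/2}$, and your invocation of Lemma~\ref{lem:perm-cycle-utility}\eqref{item:utility-3} is valid (the paper instead observes directly that $E_2^{38,\geq 3}(S/3)=0$, so there are no possible targets for a differential---slightly more elementary, but equivalent).

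For claim (c), however, your approach has both an arithmetic slip and a structural overcomplication. First, $|v_1|=4$ at $p=3$, so $v_1^4\cdot\{j_4(v_1^2v_2^3)\}\in\pi_{55}(S/3)$, not $\pi_{43}(S/3)$; all the degrees in your outline need adjustment. Second, and more seriously, your template from Lemma~\ref{lem:d5(v2^9)} does not transplant cleanly: the key step there is that $d_5((v_2^3)^3)=3(v_2^3)^2d_5(v_2^3)=0$ by the Leibniz rule and working mod $3$. For $v_1^2v_2^3$, pushing to $S/(3,v_1^3)$ gives only $d_5(v_1^2v_2^3)=v_1^2d_5(v_2^3)$, and you have given no argument that $d_5(v_2^3)$ (or $v_1^2$ times it) vanishes in $E_5^{47,5}(S/(3,v_1^3))$, a group which is not obviously zero. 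You would also need an analog of Proposition~\ref{prop:143-only-5} at stem $55$, which you gesture at but do not prove.

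The paper's argument for (c) avoids all of this by a direct low-dimensional computation: it considers $v_1^3\cdot\beta'_{3/2}\in\pi_{51}(S/3)$. From Ravenel's tables $\pi_{51}(S/3)=\F_3\{\alpha_{13},\bar{\beta_1^5}\}$; an $\alpha_{13}$ component would be a non-hidden (filtration-$1$) extension, contradicting $v_1^3\cdot j_4(v_1^2v_2^3)=v_1^3\cdot j_2(v_2^3)=0$ in $E_2$. So $v_1^3\cdot\beta'_{3/2}=c\bar{\beta_1^2}\beta_1^3$, and then $v_1\cdot\bar{\beta_1^2}\in\pi_{25}(S/3)=0$ finishes it. This is a two-line argument once you know where to look; your route, while not unreasonable in spirit, requires several nontrivial sublemmas that you have not established.
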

\begin{proof}
We have $j(j_4(v_1^2v_2^3)) = j(j_2(v_2^3)) = \beta_{3/2}$ in $E_2(S)$ by the
definition of the $\beta$ elements along with Lemma \ref{lem:delta-switch}. By classical computations of the
Adams-Novikov $E_2$ page (see e.g. \cite[Figure 1.2.19]{green}),
$E_2^{38,f}(S)=0 = E_2^{37,f+1}(S)$ for $f\geq 3$, so $E_2^{38,f}(S/3)=0$ for
$f\geq 3$. Thus $j_4(v_1^2v_2^3)\in E_2^{39,1}(S/3)$
cannot support a differential of any length.

As $v_1^4\cdot j_4= 0$ as a map $E_2(S/(3,v_1^4))\to E_2(S/3)$,
it remains to rule out hidden $v_1^4$-extensions on $\beta'_{3/2}:= \{
j_4(v_1^2v_2^3) \}\in \pi_{39}(S/3)$.
Using \cite[Table A3.4]{green} we have $\pi_{51}(S/3) = \F_3\{ \alpha_{13},
\bar{\beta_1^5} \}$ and so $v_1^3\cdot \beta'_{3/2} =
c\bar{\beta_1^5} = c\bar{\beta_1^2}\cdot \beta_1^3$ for some $c\in \F_3$.
(If there were an $\alpha_{13}$ component, then the extension would not be
hidden.)
We have $v_1\cdot \bar{\beta_1^2}=0$ for degree reasons, as Ravenel's table implies
$\pi_{25}(S/3)=0$. Thus $v_1\cdot v_1^3\cdot \beta'_{3/2}=0$.
\end{proof}

\begin{lemma}\label{lem:6/3}
The class $j_4(v_1v_2^6)\in E_2^{83,1}(S/3)$ is a permanent cycle such that
$j(j_4(v_1v_2^6)) = \beta_{6/3}\in E_2^{82,2}(S)$ and
$v_1^4\cdot \{j_4(v_1v_2^6)\}=0$ in $\pi_*(S/3)$.
\end{lemma}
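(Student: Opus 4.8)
The plan is to follow the same template used in the proof of Lemma \ref{lem:beta32bar*v1}, since $\beta_{6/3}$ plays an analogous role to $\beta_{3/2}$. First I would establish that $j(j_4(v_1v_2^6)) = \beta_{6/3}$ in $E_2(S)$: by Lemma \ref{lem:delta-switch} we have $j_4(v_1v_2^6) = j_3(v_2^6)$, so $j(j_4(v_1v_2^6)) = j(j_3(v_2^6)) = \beta_{6/3}$ by the definition of the $\beta$ elements recalled at the start of this section. Next I would show $j_4(v_1v_2^6)\in E_2^{83,1}(S/3)$ is a permanent cycle. The cleanest route is a degree/sparseness argument: $\beta_{6/3}\in E_2^{82,2}(S)$ is a permanent cycle by \cite[Table A3.4]{green}, and I would check from the classical $E_2(S)$ chart (e.g. \cite{guozhen-data}, or \cite[Figure 1.2.19]{green}) that $E_2^{83,f}(S) = 0 = E_2^{82,f+1}(S)$ for $f\geq 3$, which via the long exact sequence for $S\to S\to S/3$ gives $E_2^{83,f}(S/3)=0$ for $f\geq 3$. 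Hence the filtration-1 class $j_4(v_1v_2^6)$ has no room to support a differential of any length, exactly as in Lemma \ref{lem:beta32bar*v1}.

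The remaining and genuinely non-formal part is showing $v_1^4\cdot \{j_4(v_1v_2^6)\} = 0$ in $\pi_*(S/3)$. Since $v_1^4\cdot j_4 = 0$ as a map $E_2(S/(3,v_1^4))\to E_2(S/3)$, the product is zero on the $E_2$ page, so any nonzero value would be a \emph{hidden} $v_1^4$-extension on $\beta'_{6/3} := \{j_4(v_1v_2^6)\}\in \pi_{83}(S/3)$; the target lives in $\pi_{95}(S/3)$. I would look up $\pi_{95}(S/3)$ using \cite[Table A3.4]{green} together with the long exact sequence $\pi_*(S)\to \pi_*(S/3)\to \pi_{*-1}(S)$, identify the $v_1$-power-torsion summand it could land in, and then argue the extension dies after multiplying by a further power of $v_1$ — that is, show $v_1^k\cdot v_1^4\cdot \beta'_{6/3} = 0$ for some small $k$ by a degree argument analogous to the ``$v_1\cdot\bar{\beta_1^2}=0$ since $\pi_{25}(S/3)=0$'' step in Lemma \ref{lem:beta32bar*v1}. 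If the relevant homotopy group has a summand on which $v_1$ acts injectively for a while, one would instead need to rule out that component directly by a filtration or naturality argument (e.g. mapping to $S/(3,v_1)$ or comparing Adams-Novikov filtrations); but the expectation, given the pattern in the $\beta_{3/2}$ case, is that sparseness of $\pi_*(S/3)$ in the relevant stems makes $v_1^5\cdot\beta'_{6/3}$ or $v_1^6\cdot\beta'_{6/3}$ vanish outright.

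The main obstacle I anticipate is precisely this last point: correctly pinning down $\pi_{95}(S/3)$ and the nearby groups $\pi_{99}(S/3)$, $\pi_{103}(S/3)$ and verifying that no $v_1$-divisible class obstructs the vanishing of the hidden extension after one or two further multiplications by $v_1$. Everything else (the identification of $j(j_4(v_1v_2^6))$ with $\beta_{6/3}$ and the survival of $j_4(v_1v_2^6)$) is a formal consequence of Lemma \ref{lem:delta-switch}, the Geometric Boundary Theorem, and the sparseness of $E_2(S)$ in the 82/83-stems. I would reuse the bookkeeping from Lemma \ref{lem:beta32bar*v1} essentially verbatim, changing only the stems and the specific homotopy groups cited from Ravenel's table.
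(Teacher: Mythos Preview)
Your permanent-cycle argument has a genuine gap. The vanishing claim $E_2^{82,f}(S)=0=E_2^{83,f}(S)$ for $f\geq 3$ that you propose to transplant from the $\beta_{3/2}$ case is \emph{false} in stems 82--83: for instance $\beta_1^3\beta_2^2\in E_2^{82,10}(S)$ is nonzero (the paper uses $\beta_1^9\beta_2^2\neq 0$ in the proof of Proposition~\ref{prop:143-only-5}), so $E_2^{82,\geq 6}(S/3)$ contains potential targets for differentials on $j_4(v_1v_2^6)$. The sparseness argument that worked in stems 37--38 simply does not survive to this range; Ravenel's Figure~1.2.19 does not cover it. The paper instead applies Lemma~\ref{lem:perm-cycle-utility}\eqref{item:utility-3}: since $\beta_{6/3}$ is a $3$-torsion permanent cycle by \cite[Table~A3.4]{green} and $i(E_2^{*,1}(S))$ consists of permanent cycles, $j_4(v_1v_2^6)$ is a permanent cycle with no need to control higher filtrations at all.

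For the $v_1^4$-extension, your arithmetic slips (the target is $\pi_{99}(S/3)$, not $\pi_{95}$), and more importantly your plan misses the key structural input. The paper does not chase the extension out to $\pi_{99}$ or beyond; it analyzes the intermediate product $v_1^2\cdot\{j_4(v_1v_2^6)\}\in\pi_{91}(S/3)$, eliminates the $\beta_1$-divisible summands using $v_1^2\beta_1=0$ (Lemma~\ref{lem:b6}\eqref{item:v1^2beta}), rules out an $\alpha_{23}$ component by filtration, and reduces to showing $v_1^2\cdot\beta'_6=0$ where $j(\beta'_6)=\beta_6$. The decisive step is then Oka's result \cite[Lemma~3]{oka-beta} that $v_1v_2^6$ is a permanent cycle in $E_2(S/(3,v_1^2))$: this gives $\beta'_6=j_2(\{v_1v_2^6\})$, whence $v_1^2\cdot\beta'_6=0$ on the nose since $v_1^2\cdot j_2=0$ in homotopy. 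Your proposal to keep multiplying by $v_1$ and hope for a vanishing stem would have to contend with $\pi_{99}(S/3)$ and beyond, where (as Lemma~\ref{lem:99,5} shows) the homotopy is nontrivial; without Oka's lemma it is not clear the argument terminates.
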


\begin{proof}
By Ravenel's table \cite[Table A3.4]{green}, $\beta_{6/3}$ and $\beta_6$ are
3-torsion permanent cycles. Since $j(j_4(v_1v_2^6)) = \beta_{6/3}$ and
$j(j_4(v_1^3v_2^6)) = \beta_6$, we apply Lemma
\ref{lem:perm-cycle-utility}\eqref{item:utility-3} to $j_4(v_1v_2^6)\in
E_2^{83,1}(S/3)$ and $j_4(v_1^3v_2^6)\in E_2^{91,1}(S/3)$, noting that
$E_2^{*,1}(S)$ consists of permanent cycles. This shows that $j_4(v_1v_2^6)$ and
$j_4(v_1^3v_2^6)$ are permanent cycles.

To determine $v_1^4\cdot \{j_4(v_1v_2^6)\}$, we first consider the
possibilities for $v_1^2\cdot \{j_4(v_1v_2^6)\}\in \pi_{91}(S/3)$: from Ravenel's tables, we have
$$ \pi_{91}(S/3) = \F_3\{ \alpha_{23}, \beta_1\gamma_2, \beta_1x_{81},
\beta'_6 \} $$
where $j(\beta'_6)=\beta_6$.
Since $v_1^2\cdot \beta_1 = 0$ in homotopy by Lemma
\ref{lem:b6}\eqref{item:v1^2beta}, we have $v_1^2\cdot
\{j_4(v_1v_2^6)\}=c_1\alpha_{23} + c_2\beta'_6$ for $c_i\in \F_3$. If $c_1\neq 0$ then
$v_1^4\cdot \{j_4(v_1v_2^6)\}$ would be detected in filtration 1,
contradicting the fact that $v_1^4\cdot j_4(v_1v_2^6)=0$ in $E_2(S/3)$. So
it suffices to show that $v_1^2\cdot \beta'_6 = 0$.
From above, we may write $\beta'_6 = \{j_4(v_1^3v_2^6)\} = \{ j_2(v_1v_2^6) \}$.
By \cite[Lemma 3]{oka-beta}, $v_1v_2^6\in E_2^{100,0}(S/(3,v_1^2))$ is a
permanent cycle, and hence so is $j_2(v_1v_2^6)$. We have
$\{ j_2(v_1v_2^6) \} = j_2(\{v_1v_2^6\})$ by the Geometric Boundary theorem, and $v_1^2\cdot
j_2(\{v_1v_2^6\})=0$ by definition of $j_2$ as a map on homotopy groups.
\end{proof}

\begin{lemma}\label{lem:v1^2v2^3}
The classes $v_1^2v_2^3\in E_2^{56,0}(S/(3,v_1^4))$ and
$v_1v_2^6\in E_2^{100,0}(S/(3,v_1^4))$ are permanent
cycles in the Adams-Novikov spectral sequence computing $\pi_*(S/(3,v_1^4))$.
\end{lemma}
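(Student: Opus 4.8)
The plan is to deduce the survival of $v_1^2v_2^3$ and $v_1v_2^6$ in $E_2(S/(3,v_1^4))$ from the already-established survival of their images under $j_4$ (Lemmas \ref{lem:beta32bar*v1} and \ref{lem:6/3}) by applying Lemma \ref{lem:perm-cycle-utility}\eqref{item:utiliy-v1} with $m=4$. For the first element, take $x = v_1^2v_2^3\in E_2^{56,0}(S/(3,v_1^4))$; then $j_4(x) = j_4(v_1^2v_2^3)\in E_2^{39,1}(S/3)$ is a permanent cycle by Lemma \ref{lem:beta32bar*v1}, and that same lemma gives $v_1^4\cdot\{j_4(v_1^2v_2^3)\} = 0$ in $\pi_*(S/3)$, so the first two hypotheses of Lemma \ref{lem:perm-cycle-utility}\eqref{item:utiliy-v1} hold (the filtration bound $f = 0\leq 3$ is immediate). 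Similarly, for the second element take $x = v_1v_2^6\in E_2^{100,0}(S/(3,v_1^4))$, with $j_4(x) = j_4(v_1v_2^6)\in E_2^{83,1}(S/3)$ a permanent cycle killing $v_1^4$ by Lemma \ref{lem:6/3}.

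The remaining hypothesis to check in each case is that $\Im(i_4: E_2^{s,0}(S/3)\to E_2^{s,0}(S/(3,v_1^4)))$ consists of permanent cycles, for $s = 56$ and $s = 100$. This is handled exactly as in the proof of Theorem \ref{thm:v2^9}: the $0$-line $E_2^{*,0}(S/3)$ is the polynomial algebra $\F_3[v_1]$, so $E_2^{56,0}(S/3)$ is spanned by $v_1^{14}$ and $E_2^{100,0}(S/3)$ by $v_1^{25}$, and powers of $v_1$ are permanent cycles (they are detected by the Adams self-map $v_1$ on $S/3$, or simply because they map to powers of the permanent cycle $v_1$ under $i$ applied to the image of the $1$-line — more directly, $v_1^k$ converges to the evident element of $\pi_*(S/3)$ coming from iterating Adams' self-map). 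Hence $\Im(i_4)$ is generated by a permanent cycle in both degrees.

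With all three hypotheses verified, Lemma \ref{lem:perm-cycle-utility}\eqref{item:utiliy-v1} yields that $v_1^2v_2^3$ and $v_1v_2^6$ are permanent cycles in $E_2(S/(3,v_1^4))$, completing the proof. I do not anticipate a genuine obstacle here: the substance of the argument was already carried out in Lemmas \ref{lem:beta32bar*v1} and \ref{lem:6/3} (the hidden-extension computations using Ravenel's tables and Lemma \ref{lem:b6}), and this lemma is essentially a bookkeeping application of the general utility lemma. The only point requiring minor care is confirming that the named $E_2$-page classes $v_1^2v_2^3$ and $v_1v_2^6$ genuinely lift to $E_2(S/(3,v_1^4))$ — i.e. that $v_1^2v_2^3$ and $v_1v_2^6$ are cobar cycles modulo $(3,v_1^4)$ — which follows from the formula $\eta_R(v_2)\equiv v_2 + v_1t_1^3 - v_1^3t_1 \pmod 3$ quoted in Section \ref{sec:v2^9}, since multiplying $v_2^3$ (already a cycle mod $(3,v_1^3)$) by $v_1^2$ or by $v_1$ and $v_2^3$ keeps us in the appropriate quotient.
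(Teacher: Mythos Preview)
Your proposal is correct and follows the same approach as the paper: apply Lemma~\ref{lem:perm-cycle-utility}\eqref{item:utiliy-v1} with Lemmas~\ref{lem:beta32bar*v1} and~\ref{lem:6/3} supplying the first two hypotheses. Your verification of the $\Im(i_4)$ hypothesis via $E_2^{*,0}(S/3)=\F_3[v_1]$ (mirroring the proof of Theorem~\ref{thm:v2^9}) is in fact more direct than the paper's own argument, which at this step discusses $E_2^{*,1}(S/3)$ instead.
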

\begin{proof}
Use Lemma \ref{lem:perm-cycle-utility}\eqref{item:utiliy-v1}, with
Lemmas \ref{lem:beta32bar*v1} and \ref{lem:6/3} as input. To check the condition
about the image of $i_4:E_2(S/3)\to E_2(S/(3,v_1^4))$ in these degrees, note that
$E_2^{*,1}(S/3)$ is generated by the image of $i:E_2^{*,1}(S)\to
E_2^{*,1}(S/3)$, which consists of permanent cycles (these are all image of $J$
classes), along with elements that
map to $\beta$ elements under $j$. Standard theory about the $\beta$ elements
(\cite{MRW}) implies that $\bar{\beta_{3/2}}$ and $\bar{\beta_{6/3}}$ are the only
such elements in the relevant degrees; these are both permanent cycles by Lemmas
\ref{lem:beta32bar*v1} and \ref{lem:6/3}.
\end{proof}


\begin{lemma}\label{lem:av2^3}
The class $\alpha_1v_1v_2^3\in E_2^{55,1}(S/(3,v_1^4))$ is a permanent cycle in
the Adams-Novikov spectral sequence.
\end{lemma}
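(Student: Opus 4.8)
The plan is to run the same machine used for the other permanent-cycle results in this section, namely Lemma~\ref{lem:perm-cycle-utility}\eqref{item:utiliy-v1}, applied to the element $\alpha_1 v_1 v_2^3 \in E_2^{55,1}(S/(3,v_1^4))$. That lemma has three hypotheses to check, and the structure of the proof will mirror the proof of Lemma~\ref{lem:av2^3}'s companion, Lemma~\ref{lem:v1^2v2^3}, except that everything is multiplied by $\alpha_1$.

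First I would identify $j_4(\alpha_1 v_1 v_2^3) \in E_2^{46,2}(S/3)$. Since $j_4$ is a map of $E_2(S)$-modules, $j_4(\alpha_1 v_1 v_2^3) = \alpha_1 \cdot j_4(v_1 v_2^3) = \alpha_1 \bar{\beta_{3/2}}$, where $\bar{\beta_{3/2}} = j_4(v_1^2 v_2^3)$... wait, I must be careful with the $v_1$-exponents: by Lemma~\ref{lem:delta-switch}, $j_4(v_1 v_2^3) = j_3(v_2^3) = \bar{\beta_{3/3}}$, so in fact $j_4(\alpha_1 v_1 v_2^3) = \alpha_1 \bar{\beta_{3/3}}$, whose image under $j$ is $\alpha_1 \beta_{3/3}$. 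The first hypothesis of Lemma~\ref{lem:perm-cycle-utility}\eqref{item:utiliy-v1} requires that $\alpha_1 \bar{\beta_{3/3}}$ is a permanent cycle in $E_2(S/3)$. For this I would invoke Lemma~\ref{lem:perm-cycle-utility}\eqref{item:utility-3} applied to $\alpha_1 \bar{\beta_{3/3}} \in E_2^{46,2}(S/3)$: its image under $j$ is $\alpha_1 \beta_{3/3} \in E_2^{45,3}(S)$, which is a permanent cycle (it lies in $\an{\alpha_1,\alpha_1,\beta_1^3}$, as noted in the text following Theorem~\ref{thm:beta}; alternatively read it off \cite[Table A3.4]{green}) converging to an essential order-3 element of $\pi_{45}(S)$, and $\Im(i)$ in bidegree $(46,2)$ of $E_2(S/3)$ consists of permanent cycles — one needs the relevant part of $E_2^{46,2}(S)$ to consist of permanent cycles, which should again follow from \cite[Table A3.4]{green} since this is a low stem. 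The second hypothesis is that $v_1^4 \cdot \{j_4(\alpha_1 v_1 v_2^3)\} = v_1^4 \cdot \{\alpha_1 \bar{\beta_{3/3}}\} = 0$ in $\pi_*(S/3)$; since $\{\alpha_1\bar{\beta_{3/3}}\} = \alpha_1 \cdot \{j_4(v_1^2 v_2^3)\}$ up to a $v_1$-power bookkeeping, and $v_1^4 \cdot \{j_4(v_1^2 v_2^3)\} = 0$ already by Lemma~\ref{lem:beta32bar*v1}, we get $v_1^4 \cdot \{\alpha_1 \bar{\beta_{3/3}}\} = \alpha_1 \cdot (v_1^4 \cdot \{j_4(v_1^2 v_2^3)\}) = 0$ — but I need to double-check the $v_1$-exponent conversion via Lemma~\ref{lem:delta-switch} so that the two representatives really differ only by multiplication by $\alpha_1$ and a unit. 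The third hypothesis, that $\Im(i_4 : E_2^{55,1}(S/3) \to E_2^{55,1}(S/(3,v_1^4)))$ consists of permanent cycles, I would handle exactly as in Lemma~\ref{lem:v1^2v2^3}: $E_2^{*,1}(S/3)$ is generated by image-of-$i$ classes (image-of-$J$ elements, hence permanent cycles) together with classes mapping to $\beta$ elements under $j$; in stem $55$ the only relevant such class is $\bar{\beta_{3/2}}$ (by the Miller–Ravenel–Wilson description \cite{MRW}), which is a permanent cycle by Lemma~\ref{lem:beta32bar*v1}.

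The main obstacle I anticipate is the degree bookkeeping around the $v_1$-power in $j_4(\alpha_1 v_1 v_2^3)$ versus the classes named in Lemma~\ref{lem:beta32bar*v1}: that lemma is phrased in terms of $j_4(v_1^2 v_2^3)$ (giving $\beta_{3/2}$), while here the natural top-cell class is $j_4(v_1 v_2^3) = \bar{\beta_{3/3}}$ (via Lemma~\ref{lem:delta-switch}), and $\alpha_1 \beta_{3/3}$ rather than $\alpha_1 \beta_{3/2}$ is what shows up. One must make sure that the hidden-extension input $v_1^4 \cdot \{j_4(v_1^2 v_2^3)\} = 0$ is actually being applied to the right element — i.e. to $v_1 \cdot \{j_4(v_1 v_2^3)\}$ up to units — and that the chain of identities $\{j_4(\alpha_1 v_1 v_2^3)\} = \alpha_1 \{j_4(v_1 v_2^3)\}$ is legitimate at the level of homotopy (it is, since $j_4$ is a map of spectra and $\alpha_1$ acts via the $S$-module structure). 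A secondary routine point is confirming from \cite[Table A3.4]{green} that $E_2^{46,2}(S)$ and the surrounding groups behave well enough that the $\Im(i)$-permanent-cycle hypothesis of Lemma~\ref{lem:perm-cycle-utility}\eqref{item:utility-3} genuinely holds; this is a finite check in a tractable stem but needs to be stated carefully.
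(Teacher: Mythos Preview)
Your overall strategy---apply Lemma~\ref{lem:perm-cycle-utility}\eqref{item:utility-3} to $j_4(\alpha_1v_1v_2^3)$, then Lemma~\ref{lem:perm-cycle-utility}\eqref{item:utiliy-v1} to $\alpha_1v_1v_2^3$---is exactly the paper's approach. But your execution has several errors, one of which is a genuine gap.

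First, your bidegrees are off by $8$ throughout: $j_4$ shifts stem by $-4\cdot 4-1=-17$, so $j_4(\alpha_1v_1v_2^3)\in E_2^{38,2}(S/3)$, not $E_2^{46,2}$, and $j(j_4(\alpha_1v_1v_2^3))=\alpha_1\beta_{3/3}\in E_2^{37,3}(S)$, not $E_2^{45,3}$. This matters because the $\Im(i)$ check in Lemma~\ref{lem:perm-cycle-utility}\eqref{item:utility-3} must be done in $E_2^{38,2}(S)=\F_3\{\beta_{3/2}\}$. Similarly, for the third hypothesis you need $E_2^{55,1}(S/3)$, whose possible top-cell classes come from $E_2^{54,2}(S)$; there are \emph{no} $\beta$ elements in that degree, so your claim that $\bar{\beta_{3/2}}$ (which lives in stem $39$) is the relevant class there is simply wrong.

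The substantive gap is your argument for $v_1^4\cdot\{\alpha_1\bar{\beta_{3/3}}\}=0$. You try to reduce to Lemma~\ref{lem:beta32bar*v1}, but $\alpha_1\bar{\beta_{3/3}}$ and $\bar{\beta_{3/2}}=j_4(v_1^2v_2^3)$ are not related by ``multiplication by $\alpha_1$ and a unit'': on $E_2$ one has $v_1\bar{\beta_{3/3}}=\bar{\beta_{3/2}}$, so at best $v_1^4\{\alpha_1\bar{\beta_{3/3}}\}$ would equal $\alpha_1\cdot v_1^3\{\bar{\beta_{3/2}}\}$ \emph{modulo possible hidden extensions}, and Lemma~\ref{lem:beta32bar*v1} only gives $v_1^4\{\bar{\beta_{3/2}}\}=0$, not $v_1^3$. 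Moreover $\bar{\beta_{3/3}}$ itself is not a permanent cycle (since $\beta_{3/3}$ supports the Toda differential), so you cannot factor $\{\alpha_1\bar{\beta_{3/3}}\}$ as $\alpha_1\cdot\{\bar{\beta_{3/3}}\}$. The paper sidesteps all of this: the product $v_1^4\cdot\{j_4(\alpha_1v_1v_2^3)\}$ lives in $\pi_{54}(S/3)$, and since $\pi_{53}(S)=0=\pi_{54}(S)$ by \cite[Table~A3.4]{green}, one has $\pi_{54}(S/3)=0$. Any repair of your route ultimately reduces to this vanishing.
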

\begin{proof}
There is a Toda bracket $\an{\alpha_1,\alpha_1,\beta_1^3}\in \pi_{37}(S)$
detected by $\alpha_1\beta_{3/3}$ in filtration 3, and this class is 3-torsion
(see \cite[Table A3.4]{green}). In order to apply Lemma
\ref{lem:perm-cycle-utility}\eqref{item:utility-3} to
$j_4(\alpha_1v_1v_2^3)\in E_2^{38,2}(S/3)$, we must check that $E_2^{38,2}(S)$
consists of permanent cycles. It follows from standard facts about the
Adams-Novikov 2-line (\cite{MRW}) that $E_2^{38,2}(S)=\F_3\{ \beta_{3/2} \}$. So
we may conclude that $j_4(\alpha_1v_1v_2^3)$ is a permanent cycle.

Moreover, $v_1^4\cdot \{ j_4(\alpha_1v_1v_2^3) \}$ is zero in homotopy: since
$\pi_{53}(S)=0=\pi_{54}(S)$ by \cite[Table A3.4]{green}, we have
$\pi_{54}(S/3)=0$. In order to apply Lemma
\ref{lem:perm-cycle-utility}\eqref{item:utiliy-v1} to $\alpha_1v_1v_2^3\in
E_2^{55,1}(S/(3,v_1^4))$, we must check that $E_2^{55,1}(S/3)$ consists of
permanent cycles. This is true because the image of $E_2^{*,1}(S)$ consists of
permanent cycles, and analysis of the 2-line reveals that there cannot be a
class with nontrivial image in $E_2^{54,2}(S)$. Thus we have that
$\alpha_1v_1v_2^3$ is a permanent cycle.
\end{proof}

\begin{lemma}\label{lem:av2^7-new}
The class $\alpha_1v_1 v_2^7\in E_2^{119,1}(S/(3,v_1^2))$ is a permanent cycle in the
Adams-Novikov spectral sequence.
\end{lemma}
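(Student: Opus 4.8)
The plan is to follow the proof of Lemma~\ref{lem:av2^3} essentially verbatim, applying Lemma~\ref{lem:perm-cycle-utility} twice: once over $S/3$ and once over $S/(3,v_1^2)$. By Lemma~\ref{lem:delta-switch} we have $j_2(\alpha_1v_1v_2^7) = j_1(\alpha_1v_2^7)\in E_2^{110,2}(S/3)$, and the definition of the $\beta$ elements gives $j(j_1(\alpha_1v_2^7)) = \alpha_1\beta_7\in E_2^{109,3}(S)$. So I would first show that $j_1(\alpha_1v_2^7)$ is a permanent cycle in $E_2^{110,2}(S/3)$ using Lemma~\ref{lem:perm-cycle-utility}\eqref{item:utility-3}, and then feed this into Lemma~\ref{lem:perm-cycle-utility}\eqref{item:utiliy-v1} applied to $\alpha_1v_1v_2^7\in E_2^{119,1}(S/(3,v_1^2))$.

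The first application needs two inputs: (a) $\alpha_1\beta_7$ is a nonzero permanent cycle in $E_2^{109,3}(S)$ (so that $\{\alpha_1\beta_7\}$ is an essential class, automatically of order $3$ since $3\alpha_1=0$); and (b) $\Im\bigl(i\colon E_2^{110,2}(S)\to E_2^{110,2}(S/3)\bigr)$ consists of permanent cycles. For (b): the structure theory of the Adams--Novikov $2$-line \cite{MRW} identifies $E_2^{110,2}(S)$ with $\F_3\{\beta_{9/8}\}$, and $i(\beta_{9/8})$ is a permanent cycle in $E_2(S/3)$ since $\beta_{9/8}=j(j_8(v_2^9))$ with $j_8(v_2^9)$ a permanent cycle by Lemma~\ref{lem:delta-perm} and $i,j$ maps of spectral sequences. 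For (a): multiplying the Arf-type permanent cycle $\beta_7+c\beta_{9/9}\in E_2^{106,2}(S)$ (a $3$-torsion permanent cycle by \cite[Table A3.4]{green}, as already used in Lemma~\ref{lem:delta-perm}) by $\alpha_1$ shows $\alpha_1\beta_7+c\alpha_1\beta_{9/9}$ is a permanent cycle; I would then pin down the $\alpha_1\beta_{9/9}$ contribution using the Massey-product description $\alpha_1\beta_7\in\an{\alpha_1,\alpha_1,\beta_1^2\beta_{6/3}}$ together with a Moss-type convergence argument, the Toda bracket $\an{\{\alpha_1\},\{\alpha_1\},\{\beta_1^2\beta_{6/3}\}}$ being defined (since $\{\alpha_1\}^2=0$ and $\alpha_1\cdot\{\beta_1^2\beta_{6/3}\}=0$, checkable within \cite[Table A3.4]{green}) and detected by $\alpha_1\beta_7$.

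The second application needs: (c) $v_1^2\cdot\{j_1(\alpha_1v_2^7)\}=0$ in $\pi_{118}(S/3)$; and (d) $\Im\bigl(i_2\colon E_2^{119,1}(S/3)\to E_2^{119,1}(S/(3,v_1^2))\bigr)$ consists of permanent cycles. For (d): the long exact sequence for $S/3$ together with the $2$-line analysis of \cite{MRW} shows $E_2^{119,1}(S/3)$ is spanned by the image-of-$J$ class $\alpha_1v_1^{29}$ and the top-cell class $\overline{\beta_{9/6}} = j_6(v_2^9) = v_1^2\cdot j_8(v_2^9)$ (using Lemma~\ref{lem:delta-switch}), which is a permanent cycle because $j_8(v_2^9)$ is (Lemma~\ref{lem:delta-perm}) and $v_1^2$ is a $0$-line class; both generators are permanent cycles, hence so is everything in $\Im(i_2)$. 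For (c): I would run the hidden-extension analysis of Lemmas~\ref{lem:beta32bar*v1} and~\ref{lem:6/3}, namely list $\pi_{118}(S/3)$ via \cite[Table A3.4]{green} and the long exact sequence for $S/3$, observe that $v_1^2\cdot j_2=0$ as a map $E_2(S/(3,v_1^2))\to E_2(S/3)$ so that the product already vanishes on the $E_2$ page and any nonzero homotopy product must be a strictly-higher-filtration extension, and rule out the remaining possibilities using facts such as $v_1^2\beta_1=0$ (Lemma~\ref{lem:b6}\eqref{item:v1^2beta}) and the vanishing of the relevant low-dimensional homotopy groups.

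The main obstacle is step (a): because $\pi_{109}(S)$ lies just past the range of \cite[Table A3.4]{green}, the survival and nonvanishing of $\alpha_1\beta_7$ cannot be read off directly, and must be deduced from the Toda bracket $\an{\alpha_1,\alpha_1,\beta_1^2\beta_{6/3}}$, which requires checking that this bracket is defined in homotopy and that no crossing differentials obstruct Moss convergence; alternatively, essentiality of $\{\alpha_1\beta_7\}$ can be confirmed a posteriori from its nonzero image in $\pi_*\tmf$ (Section~\ref{sec:tmf}). A secondary difficulty is the hidden $v_1^2$-extension in step (c), which requires a fairly explicit grip on $\pi_{118}(S/3)$.
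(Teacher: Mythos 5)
Your plan is genuinely different from the paper's, and as it stands it has a real gap: both of the inputs you flag as "obstacles" are exactly the points that cannot be supplied from the sources you cite, and the paper's proof is designed to avoid them. For step (a), Ravenel's table stops at stem $108$, so neither the survival nor the essentiality of $\alpha_1\beta_7\in E_2^{109,3}(S)$ can be read off; multiplying the Arf class $\beta_7+c\beta_{9/9}$ by $\alpha_1$ only shows that $\alpha_1\beta_7+c\,\alpha_1\beta_{9/9}$ is a permanent cycle, and you have no control over $\alpha_1\beta_{9/9}$ (it is not known to vanish in $E_2$, and it also sits inside the indeterminacy of the Massey product $\an{\alpha_1,\alpha_1,\beta_1^2\beta_{6/3}}$, so a Moss-type argument would at best produce a permanent cycle of the form $\alpha_1\beta_7+c'\alpha_1\beta_{9/9}$ unless you do additional work). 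The proposed fallback via $\pi_*\tmf$ is circular in the paper's logic: the Hurewicz statement for $\alpha_1\beta_{9t+7}$ in Section \ref{sec:tmf} presupposes Theorem \ref{thm:beta}, which rests on this very lemma; and in any case the $\tmf$ comparison could only confirm essentiality \emph{after} survival in $E_2(S)$ is known, which is the hard part. For step (c), the hidden $v_1^2$-extension argument requires an explicit description of $\pi_{118}(S/3)$ (hence of $\pi_{117}(S)$ and $\pi_{118}(S)$), which again is far beyond Table A3.4 and is not among the isolated computer facts recorded in Lemma \ref{lem:computer} (whose differentials are only complete through stem $108$ outside the $140$--$144$ window). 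So two essential steps of your argument are unproven and not obtainable from the cited inputs.

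The paper sidesteps all of this with a short, low-dimensional argument: using the ring structure on $S/(3,v_1^2)$ (Theorem \ref{thm:oka-ring}) it writes $\alpha_1v_1v_2^7=v_1v_2^5\cdot\alpha_1v_2^2$, quotes Oka's result that $v_2^5$ is a permanent cycle on $S/(3,v_1)$ (so $v_1v_2^5$ is one on $S/(3,v_1^2)$), and then rules out differentials by examining $E_2^{34,\ast}$ of $S/(3,v_1^2)$ via the classically known groups $E_2^{25,\ast}(S/3)=0$ and $\bar{\alpha_1\beta_1^3}=v_1\beta_1^3$; the only candidate target of a differential on the product is $v_1^2$-divisible, hence zero. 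If you want to salvage your route, you would need an independent proof that $\alpha_1\beta_7$ itself (not just the Arf combination) survives and detects an essential order-$3$ class, plus a computation of $\pi_{118}(S/3)$ in the relevant filtrations -- at which point the factorization argument is clearly the more economical path.
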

\begin{proof}
Since $S/(3,v_1^2)$ is a ring spectrum (Theorem
\ref{thm:oka-ring}), we may consider this element as a product $v_1v_2^5 \cdot \alpha_1v_2^2$.
Oka \cite[Lemma 2]{oka-beta} showed that $v_2^5$ is a permanent cycle in
$E_2^{80,0}(S/(3,v_1))$. This implies that $v_1v_2^5$ is a permanent cycle in
$E_2^{84,0}(S/(3,v_1^2))$.

Next we consider possible differentials on $\alpha_1v_2^2\in
E_2^{35,1}(S/(3,v_1^2))$.
An element in $E_2^{34,f}(S/(3,v_1^2))$ either has nonzero image under $j_2$ in
$E_2^{25,f+1}(S/3)$ or is the image under $i_2$ of an element of
$E_2^{34,f}(S/3)$.
From classically known computations of
the Adams-Novikov $E_2$ page (e.g. see \cite[Figure 1.2.19]{green}), we deduce
that $E_2^{25,*}(S/3) = 0$ and
$E_2^{35,\geq 3}(S/3)=\F_3\{ \bar{\alpha_1\beta_1^3} \}$. This implies
$E_2^{34,\geq 3}(S/(3,v_1^2))$ is generated by
$i_2(\bar{\alpha_1\beta_1^3})$. Observe that $\bar{\alpha_1\beta_1^3} =
\bar{\alpha_1}\beta_1^3 = v_1\cdot \beta_1^3$.
Thus the only possible nonzero differential on $v_1v_2^5\cdot \alpha_1v_2^2$ is
a $d_5$ with target $v_1v_2^5\cdot v_1\beta_1^3$. But the target is divisible by
$v_1^2$, hence zero in $E_5(S/(3,v_1^2))$.
\end{proof}

\begin{proof}[Proof of Theorem \ref{thm:beta}]
We show that $\beta_{9t+3/2}$ and $\beta_{9t+3/1}$ are permanent cycles for $t\geq 0$. Since $v_2^9$
is a permanent cycle in $E_2(S/(3,v_1^8))$ by Theorem \ref{thm:v2^9}, its image
in $E_2(S/(3,v_1^4))$ is a permanent cycle. Lemma
\ref{lem:v1^2v2^3} says that $v_1^2v_2^3$ is a permanent cycle in
$E_2(S/(3,v_1^4))$, so the product $v_1^2 v_2^3 \cdot v_2^{9t}\in
E_2(S/(3,v_1^4))$ is a permanent cycle. Recall that $\beta_{9t+3/2}\in E_2(S)$ is defined
as $j(j_2(v_2^{9t+3}))= j(j_4(v_1^2v_2^{9t+3}))$ in $E_2(S/3)$. Since
$j_4(v_1^2v_2^{9t+3})$ is a permanent cycle, so is $j(j_4(v_1^2v_2^{9t+3}))$.
Since $v_1^2v_2^3$ is a permanent cycle in $E_2(S/(3,v_1^4))$, so is
$v_1^3v_2^3$, so $\beta_{9t+3/1} = j(j_1(v_2^{9t+3}))= j(j_4(v_1^3v_2^{9t+3}))$
is a permanent cycle in $E_2(S)$.

The family $\beta_{9t+9/8}$ (and hence $\beta_{9t+9/j}$ for $j < 8$) follows directly
from the fact that $v_2^9$ is a permanent cycle in $E_2(S/(3,v_1^8))$.
The other families of permanent cycles follow analogously, using Lemma
\ref{lem:v1^2v2^3} again as the input for $\beta_{9t+6/3} = j(j_4(v_1v_2^{9t+6}))$, Lemma
\ref{lem:av2^3} as the input for $\alpha_1\beta_{9t+3/3} =
j(j_4(\alpha_1v_1v_2^{9t+3}))$, and Lemma
\ref{lem:av2^7-new} as the input for $\alpha_1\beta_{9t+7} =
j(j_2(\alpha_1v_1v_2^{9t+7}))$.
\end{proof}

\section{3-primary Hurewicz image of $\tmf$}\label{sec:tmf}
In this section we determine the image of the Hurewicz map $h:\pi_*S \to
\pi_*\tmf$ induced by the unit map $S\to \tmf$. The target $\pi_*\tmf$ has been
computed via the elliptic spectral sequence (see \cite[\S3]{bauer-tmf}); this is
the $Y(4)$-based Adams spectral sequence for $\tmf$, where $Y(4)$ is the Thom
spectrum of $\Omega U(4)\to \Z\x BU$. We will denote this spectral sequence
by $\Eel_r(\tmf)$.

\begin{theorem}[{Hopkins-Mahowald, Bauer \cite[\S6]{bauer-tmf}}]
\label{thm:homotopy-tmf}
At $p=3$, $\pi_*\tmf$ 
is generated by $c_4$, $c_6$, $\Delta$, $\alpha$, $\beta$, and $b$, subject to
the relation $c_4^3-c_6^2 = 1728\Delta$ and the relations on the other
generators displayed in Figure \ref{fig:tmf}. Multiplication by $\Delta^3\in
\pi_{72}(\tmf)$ is injective.
\end{theorem}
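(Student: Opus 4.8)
This is a theorem of Hopkins and Mahowald, written up in detail by Bauer \cite[\S6]{bauer-tmf}; we only outline the strategy. The plan is to run the descent spectral sequence $\Eel_r(\tmf)$ --- which at $p=3$ is the $Y(4)$-based Adams spectral sequence of \cite[\S3]{bauer-tmf}, and which may be described geometrically by the $E_2$-page $E_2^{s,t}=H^s(\mathcal{M}_{ell};\omega^{\otimes t/2})$, where $\mathcal{M}_{ell}$ is the (compactified) moduli stack of elliptic curves and $\omega$ is the Hodge bundle --- then identify all of its differentials, and finally read off $\pi_*\tmf$ from $E_\infty$.

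First I would identify the $E_2$-page. A presentation of $\mathcal{M}_{ell}$ by Weierstrass cubics gives $H^0(\mathcal{M}_{ell};\omega^{\otimes *})=\Z_{(3)}[c_4,c_6,\Delta]/(c_4^3-c_6^2-1728\Delta)$, the ring of modular forms, while $H^{>0}$ is a torsion module which at $p=3$ is built multiplicatively from $\alpha\in H^1(\omega^{\otimes 2})$ and $\beta\in H^2(\omega^{\otimes 6})$, subject to the relations displayed in Figure \ref{fig:tmf} (in particular $3\alpha=3\beta=\alpha^2=0$, and $c_4,c_6$ act by zero on $\alpha,\beta$); here $\alpha,\beta$ are the $p=3$ analogues of the Adams-Novikov classes $\alpha_1,\beta_1$. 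The homotopy generator $b\in\pi_{27}(\tmf)$ appearing in Figure \ref{fig:tmf} is the class represented on $E_2$ by $\Delta\alpha$, which is indecomposable in homotopy because $\Delta$ will not survive the spectral sequence.

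The main step --- and the part I expect to be the main obstacle --- is determining the differentials. By sparseness the only differentials are a $d_5$ and a $d_9$, which on multiplicative generators are $d_5(\Delta)=\alpha\beta^2$ and $d_9(\Delta^2\alpha)=\beta^5$, each up to a unit. These mirror the sphere differentials $d_5(\beta_{3/3})=\alpha_1\beta_1^3$ and $d_9(\alpha_1\beta_4)=\beta_1^6$ of Table \ref{tab:anss}, with which they are compatible under the unit map $S\to\tmf$. That these differentials occur, and that no others do, is forced by combining the Leibniz rule with the facts that $\Delta^3$ must survive and that $\pi_n(\tmf)$ is finite for every $n>0$ modulo the $\Delta^3$-tower, so that the entire $(\alpha,\beta)$-tower must eventually be truncated; carrying out this forcing and ruling out all other differentials is the delicate content of \cite[\S6]{bauer-tmf}. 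Propagating by the Leibniz rule, $d_5$ kills $\Delta^k$ for $3\nmid k$ and clears most of the $(\alpha,\beta)$-tower, $d_9$ removes the $\beta^5$-multiples that remain, and the spectral sequence collapses at $E_{10}$.

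Finally one reads off $E_\infty$ and resolves the finitely many multiplicative and additive extensions, which occur only in a bounded range of stems, to obtain the ring described by Figure \ref{fig:tmf}. Injectivity of multiplication by $\Delta^3\in\pi_{72}(\tmf)$ is visible already on $E_2$, where $\Delta$ is a polynomial generator, and persists to $\pi_*\tmf$ because $\Delta^3$ is a permanent cycle. In summary, once the ring of modular forms and the $E_2$-page are in hand, the whole computation reduces to pinning down the two differentials $d_5(\Delta)$ and $d_9(\Delta^2\alpha)$.
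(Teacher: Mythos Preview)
The paper does not prove this theorem at all: it is stated with attribution to Hopkins--Mahowald and Bauer \cite[\S6]{bauer-tmf} and used as input, with no accompanying argument. So there is nothing in the paper to compare your sketch against.

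That said, your outline is a fair summary of Bauer's computation and correctly identifies the two key differentials $d_5(\Delta)\doteq\alpha\beta^2$ and $d_9(\Delta^2\alpha)\doteq\beta^5$, as well as their relationship to the sphere differentials in Table~\ref{tab:anss}. Since the paper itself treats this result as a black box, an appropriate ``proof'' here would simply be a citation; your sketch goes further than the paper does, but is consistent with the cited source.
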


\begin{figure}[H]

\hspace{-5pt}\includegraphics[width=440pt]{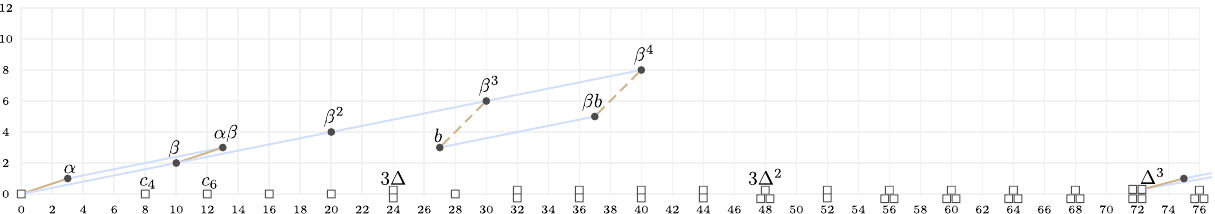}

\caption{The $E_\iy$ page of the elliptic spectral sequence computing
$\pi_s\tmf$ for $0\leq s\leq 76$. Dashed brown lines represent hidden
$\alpha$-multiples. Squares indicate copies of $\Z_{(3)}$ and dots indicate
copies of $\F_3$.}
\label{fig:tmf}
\end{figure}

We will show (Theorem \ref{thm:hurewicz-image}) that all classes in filtration
$\geq 2$ are in the Hurewicz image, and the only classes in filtrations 0 and 1
in the image are the summands generated by $1$ and $\alpha$. Instead of
directly mapping to the elliptic spectral sequence,
we use the $K(2)$-local $E$-based Adams spectral sequence
$$ \EE_2(\TMF) = H^*(G_{24}; E_*) \implies \pi_*(L_{K(2)}\TMF) $$
where $E = E_2$ is height 2 Morava $E$-theory and $\TMF$ is the periodic version
of $\tmf$. There is a map of spectral sequences $E_r(S)\to \EE_r(\TMF)$ induced
by the natural maps $BP\to E$ and $S\to \TMF$. Henn-Karamanov-Mahowald
\cite[Theorem 1.1]{HKM} completely determine $\EE_2(\TMF/3)$ and provide formulas
that we use to compute the map on $E_2$ pages $E_2(S)\to \EE_2(\TMF)$ in cases
of interest (see Lemmas \ref{lem:v2-Delta} and \ref{lem:H(v23)}). For each class in $\EE_2(\TMF)$ in filtration $\geq 2$, we identify
a preimage in $E_2(S)$ that is among the classes proved to be permanent cycles
in Theorem \ref{thm:beta} or \cite{behrens-pemmaraju} (see Proposition
\ref{prop:beta9t+3}).
As we explain in the proof of Theorem \ref{thm:hurewicz-image}, it
suffices to understand the Hurewicz image in $\pi_*(L_{K(2)}\TMF)$ because there
is an injection $\pi_*(\tmf)\to \pi_*(L_{K(2)}\TMF)$ (see Lemma
\ref{lem:tmf-localization-injective}).

First we review some notation and basic facts.
We have $E_*/3 = \F_9[[u_1]][u^{\pm 1}]$, and there is a natural map $BP_*\to
E_*$ that sends $v_1\mapsto u_1u^{-2}$, $v_2\mapsto u^{-8}$, and $v_i\mapsto 0$
for $i>2$. Abusing notation, we will let $v_i$ denote its image in $E_*/3$.

Recall $j:S/3\to \Sigma S$ denotes the boundary map in the cofiber sequence
$S\too{3}S\to S/3$. We will also use $j$ to refer to the map $j\sm
\TMF: \TMF/3 \to \Sigma \TMF$. Similarly, $j_m$ will denote both
boundary maps $S/(3,v_1^m)\to S/3$ and $\TMF/(3,v_1^m)\to \TMF/3$ depending on
context.

\begin{lemma}\label{lem:v2-Delta}
In $E_*$ we have
\begin{align*}
v_2^3 & \equiv -\Delta^2 - v_1^2v_2\Delta \pmod {(3,v_1^6)}
\\v_2^6 & \equiv \Delta^4-v_1^2v_2\Delta^3\pmod {(3,v_1^3)}
\\v_2^{3^n} & \equiv -\Delta^{2\cdot 3^{n-1}}- v_1^{2\cdot 3^{n-1}}
v_2^{3^{n-1}}\Delta^{3^{n-1}}\pmod {(3,v_1^{2\cdot 3^n})}.
\end{align*}
\end{lemma}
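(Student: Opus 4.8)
\textbf{Proof plan for Lemma \ref{lem:v2-Delta}.}
The plan is to work entirely in $E_*/3 = \F_9[[u_1]][u^{\pm 1}]$ and its quotients, using the formulas of Henn--Karamanov--Mahowald \cite{HKM} that express the modular quantities $\Delta$, $c_4$, $c_6$ in terms of $u_1$ and $u$, together with the known images $v_1 \mapsto u_1 u^{-2}$ and $v_2 \mapsto u^{-8}$ of the Brown--Peterson generators. First I would record, from \cite[Theorem 1.1]{HKM} (or the formulas cited for Lemma \ref{lem:H(v23)}), the expansion of $\Delta$ as a power series in $u_1$ with coefficients in $\F_9[u^{\pm 1}]$; concretely $\Delta$ has leading term $u^{-12}$ (so that $v_2^3$ and $\Delta^2$ agree to leading order up to a unit), with higher $u_1$-corrections, and one needs only finitely many of these corrections to verify a congruence modulo a fixed power of $v_1 = u_1u^{-2}$.

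The first congruence is the base case: I would compute $v_2^3 = u^{-24}$ and compare with $-\Delta^2 - v_1^2 v_2 \Delta$, each expanded as a power series in $u_1$, and check that the two sides agree modulo $(3, u_1^6 u^{-\ast}) = (3, v_1^6)$. This is the key computational input and amounts to verifying that the $u_1^0, \dots, u_1^5$ coefficients of $v_2^3 + \Delta^2 + v_1^2 v_2 \Delta$ vanish; I expect the sign $-1$ and the precise correction term $v_1^2 v_2 \Delta$ to be exactly what is needed to cancel the first nontrivial $u_1$-coefficients of $\Delta^2$. The second congruence, for $v_2^6$ modulo $(3, v_1^3)$, then follows by squaring the first: $v_2^6 = (v_2^3)^2 \equiv (\Delta^2 + v_1^2 v_2 \Delta)^2 = \Delta^4 + 2 v_1^2 v_2 \Delta^3 + v_1^4 v_2^2 \Delta^2 \equiv \Delta^4 - v_1^2 v_2 \Delta^3 \pmod{(3,v_1^3)}$, using $2 \equiv -1 \pmod 3$ and discarding the $v_1^4$ term; I would just need to note that squaring modulo $(3,v_1^6)$ gives a valid congruence modulo $(3,v_1^3)$ since we drop terms of $v_1$-degree $\geq 3$ on both sides after expanding, but actually we must be a little careful — squaring a congruence mod $v_1^6$ gives a congruence mod $v_1^{12}$, so in fact $v_2^6 \equiv \Delta^4 + 2v_1^2 v_2\Delta^3 + v_1^4 v_2^2\Delta^2 \pmod{(3,v_1^{12})}$, and reducing mod $(3,v_1^3)$ kills the last term and turns $2$ into $-1$.

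The general congruence for $v_2^{3^n}$ I would prove by induction on $n$, the case $n=1$ being the first congruence. Raising the inductive hypothesis $v_2^{3^{n-1}} \equiv -\Delta^{2\cdot 3^{n-2}} - v_1^{2\cdot 3^{n-2}} v_2^{3^{n-2}} \Delta^{3^{n-2}} \pmod{(3, v_1^{2\cdot 3^{n-1}})}$ to the cube and using the Frobenius (the cube of a sum in characteristic $3$ is the sum of cubes up to terms divisible by $3$, all of which vanish) gives
\[
v_2^{3^n} \equiv -\Delta^{2\cdot 3^{n-1}} - v_1^{2\cdot 3^{n-1}} v_2^{3^{n-1}} \Delta^{3^{n-1}} \pmod{(3, v_1^{2\cdot 3^n})},
\]
since cubing the modulus $(3,v_1^{2\cdot 3^{n-1}})$ yields $(3, v_1^{2\cdot 3^n})$ and the cross terms in the trinomial expansion all carry a factor of $3$. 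The main obstacle is the base case verification: one must have the HKM formula for $\Delta$ accurate to enough $u_1$-order and carry out the polynomial arithmetic in $\F_9[[u_1]][u^{\pm1}]$ correctly, including pinning down the sign and checking that no $v_2$-free correction is needed at orders $u_1^2, u_1^4$ — everything after that is formal manipulation with the Frobenius and bookkeeping of $v_1$-adic valuations.
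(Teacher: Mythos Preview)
Your proposal is correct and follows essentially the same approach as the paper: verify the $v_2^3$ congruence directly from the HKM formula for $\Delta$ in terms of $u_1$ and $u$, then obtain the $v_2^6$ and $v_2^{3^n}$ congruences by squaring and iterated cubing (Frobenius), respectively. One small slip: squaring a congruence modulo $(3,v_1^6)$ only yields a congruence modulo $(3,v_1^6)$, not $(3,v_1^{12})$---there is no Frobenius for squaring in characteristic $3$---but since you immediately reduce modulo $(3,v_1^3)$ this does not affect the argument.
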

\begin{proof}
The formula $\Delta\equiv (1-\omega^2u_1^2+u_1^4)\omega^2u^{-12}\pmod
{(3,u_1^6)}$ from \cite[Proposition 5.1]{HKM} implies
\begin{align*}
\Delta^2 & \equiv (1-2\omega^2u_1^2+u_1^4)(-v_2^3)\pmod {(3,v_1^6)}
\\v_1^2v_2\Delta  & \equiv v_2^3(\omega^2u_1^2+u_1^4)\pmod {(3,v_1^6)}.
\end{align*}
where $\omega$ denotes an $8^{th}$ root of unity in $\F_9$.
Combining these facts, we obtain the formula for $v_2^3$; the formulas for $v_2^6$ and
$v_2^{3^n}$ follow from it by squaring and successive cubing, respectively.
\end{proof}

Let 
\begin{align*}
H & :E_2(S)\to \EE_2(\TMF)
\\H' & :E_2(S/3)\to \EE_2(\TMF/3)
\\H'_m & :E_2(S/(3,v_1^m))\to \EE_2(\TMF/(3,v_1^m))
\end{align*}
denote the natural maps of spectral sequences. 

\begin{lemma}\label{lem:H(v23)}
We have
\begin{align*}
H(\alpha_1) & =\alpha
& H'(j_3(v_2^3)) & \doteq \Delta\til{\alpha} 
&H'(j_3(v_1^2v_2^7)) & \doteq \Delta^4 \til{\alpha}
\\H(\beta_1) & \doteq\beta
& H(\beta_{3/3}) & \doteq \Delta \beta
& H(\beta_7) & \doteq \Delta^4\beta
\\H'(j_3(v_1^2v_2)) & \doteq \til{\alpha}
&H'(j_3(v_2^6)) & \doteq \Delta^3\til{\alpha}
\\&
& H(\beta_{6/3}) & \doteq \Delta^3\beta
\end{align*}
where $j(\til{\alpha}) = \beta$. (Here $\doteq$ denotes equality up to
multiplication by a unit.)
\end{lemma}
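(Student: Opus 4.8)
The plan is to compute each listed value by working in the cobar complex and using the explicit formulas of Henn-Karamanov-Mahowald. First I would dispense with the sphere-level maps: $H(\alpha_1) = \alpha$ because $\alpha_1$ is represented by $[t_1]$ in the cobar complex for $S$ and the image of $t_1$ in the cobar complex for $E$ computing $\EE_2(\TMF)$ is by definition the cobar representative of $\alpha$ (this is just naturality of the Greek-letter construction). Similarly $H(\beta_1) \doteq \beta$ follows since $\beta_1 = j(j_1(v_2))$ and $\beta$ is the corresponding element of $H^2(G_{24}; E_*)$, i.e. $\beta = H(\beta_1)$ is forced by the construction of $\beta$ in \cite{HKM}. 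Then $H(\beta_{3/3}) \doteq \Delta\beta$, $H(\beta_{6/3}) \doteq \Delta^3\beta$, and $H(\beta_7) \doteq \Delta^4\beta$ follow from the corresponding identities for the $j_3$-images below, since $\beta_{a/b} = j(j_b(v_2^a))$: once we know $H'_b$ sends the relevant $v_1$-power-times-$v_2$-power to $\Delta^k\til\alpha$, applying $j$ gives $\Delta^k\beta$ because $j(\til\alpha) = \beta$ and $\Delta \in \pi_0$ is central.

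Next I would handle the $H'$ computations on $S/3$, which are the heart of the lemma. By Lemma \ref{lem:delta-switch}, $j_3(v_2^3) = j_b(v_1^{3-b}v_2^3)$ for $b \le 3$, so it suffices to identify the image of the appropriate cobar cycle representing $j_3(v_2^a)$ (an element of $E_2^{*,1}(S/3)$) under $H'$. The key computational input is Lemma \ref{lem:v2-Delta}: in $E_*/3$ we have $v_2^3 \equiv -\Delta^2 - v_1^2 v_2\Delta \pmod{(3,v_1^6)}$, and similarly for $v_2^6$ and higher. The class $j_3(v_2^3) \in E_2^{*,1}(S/3)$ is represented in the cobar complex by $\eta_R(v_2^3)/v_1^3$ reduced mod $v_1^3$ (the boundary construction for $j_3$); applying $H'$ and using that $\Delta$ is a permanent cycle with $\eta_R(\Delta) = \Delta$ in $\EE$ (it lives in the 0-line), the only surviving term comes from the $v_1^2 v_2\Delta$ piece. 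Concretely $\eta_R(v_2^3) - v_2^3 \equiv v_1^2 v_2\Delta \cdot(\text{something}) + (\text{terms in higher powers of }v_1)\pmod 3$, so dividing by $v_1^3$ and reducing mod $v_1^3$ picks out exactly $\Delta$ times the cobar representative of $\til\alpha$; here $\til\alpha \in H^1(G_{24}; E_*/3)$ is the HKM class with $j(\til\alpha) = \beta$, characterized as (a unit times) $j_3(v_1^2 v_2)$, which gives the identity $H'(j_3(v_1^2 v_2)) \doteq \til\alpha$ as a base case. The remaining identities $H'(j_3(v_2^6)) \doteq \Delta^3\til\alpha$, $H'(j_3(v_1^2 v_2^7)) \doteq \Delta^4\til\alpha$ follow the same pattern, using the $v_2^6$ and higher congruences from Lemma \ref{lem:v2-Delta} together with multiplicativity of $H'$ (via the ring structure on $S/(3,v_1^m)$ for $m \ge 2$ from Theorem \ref{thm:oka-ring}, pulled back along $i_m$) to reduce $v_2^7 = v_2 \cdot v_2^6$ etc.

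The main obstacle I expect is bookkeeping the $v_1$-adic truncations correctly: the element $j_3(v_2^a)$ lives in $E_2(S/3)$, not $E_2(S/(3,v_1^3))$, so one must be careful that the congruences in Lemma \ref{lem:v2-Delta} are modulo a high enough power of $v_1$ that dividing by $v_1^3$ and landing in $BP_*/3$ (resp.\ $E_*/3$) is legitimate, and that the error terms genuinely vanish in $\EE_2(\TMF/3)$ rather than merely in a truncation. One subtlety is that $\til\alpha$ and $\Delta^k\til\alpha$ need to be nonzero in $\EE_2(\TMF/3)$ for the identities to have content; this is guaranteed by \cite[Theorem 1.1]{HKM}, which gives the full structure of $\EE_2(\TMF/3)$ and in particular shows $\Delta^k\til\alpha \ne 0$ for the relevant $k$ (using injectivity of $\Delta^3$-multiplication from Theorem \ref{thm:homotopy-tmf}, or its $E_2$-page analogue). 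Once the truncation bookkeeping is pinned down, each individual identity is a short explicit substitution.
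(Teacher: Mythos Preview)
Your overall strategy is the same as the paper's: reduce everything to the congruences of Lemma~\ref{lem:v2-Delta} plus the base identification $H'(j_3(v_1^2v_2))\doteq\til\alpha$, and then read off the $\beta$-images by applying $j$. But two points of execution diverge from the paper and cause trouble.

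First, your treatment of $H(\beta_1)$ and $H'(j_3(v_1^2v_2))$ is circular. You take $\til\alpha$ to be ``characterized as (a unit times) $j_3(v_1^2v_2)$'' and use that as the base case, but $\til\alpha$ is only defined by $j(\til\alpha)=\beta$, so you still owe an independent proof that $H(\beta_1)=\beta$. The paper breaks the circle cleanly via the Massey products $\beta_1=\an{\alpha_1,\alpha_1,\alpha_1}$ and $\beta=\an{\alpha,\alpha,\alpha}$; once $H(\beta_1)=\beta$ is known, the identity $H'(j_3(v_1^2v_2))\doteq\til\alpha$ follows by applying $j$ and using that $\EE_2(\TMF/3)$ is one-dimensional in that bidegree.

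Second, your cobar description of $j_3$ is off. The boundary $j_3:E_2(S/(3,v_1^3))\to E_2(S/3)$ lands in $E_2(S/3)$, so ``$\eta_R(v_2^3)/v_1^3$ reduced mod $v_1^3$'' is not the right recipe; there is no reduction mod $v_1^3$ at the end, and the numerator should be $\eta_R(v_2^3)-v_2^3$. More to the point, this explicit cobar route is unnecessary. The paper instead commutes $H'$ past $j_3$ at the $E_2$ level, writing $H'(j_3(v_2^3))=j_3(H'_3(v_2^3))$, and then computes entirely in $\EE_2(\TMF/(3,v_1^3))$: Lemma~\ref{lem:v2-Delta} gives $H'_3(v_2^3)=-\Delta^2-v_1^2v_2\Delta$, and $j_3(\Delta^2)=0$ because $\Delta^2$ lifts to $\EE_2(\TMF/3)$, leaving $-\Delta\cdot j_3(v_1^2v_2)=-\Delta\til\alpha$. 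This sidesteps all of your $v_1$-adic truncation bookkeeping, which is exactly the ``main obstacle'' you anticipated.
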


\begin{proof}
Following Bauer \cite[\S6]{bauer-tmf}, we have $H(\alpha_1)=\alpha$ since they
both come from the cobar class $[t_1]$, and $H(\beta_1) = \beta$ because of the
Massey products $\beta_1 = \an{\alpha_1,\alpha_1,\alpha_1}$ and $\beta =
\an{\alpha,\alpha,\alpha}$. We have $j(\til{\alpha})=\beta$ and
$j(j_3(v_1^2v_2)) = \beta_1$, so $j(H'(j_3(v_1^2v_2)))=
H(j(j_3(v_1^2v_2)))= \beta$. This specifies
$H'(j_3(v_1^2v_2))$ up to the image of $\EE_2(\TMF)$, but
$\EE_2(\TMF/3)$ is 1-dimensional in the degree of $\til{\alpha}$, so there is no
ambiguity.

For the next column, we have in $\EE_2(\TMF/(3,v_1^3))$ that
\begin{align*}
H'(j_3(v_2^3)) & = j_3(H'_3(v_2^3)) = j_3(-\Delta^2 -v_1^2v_2\Delta)
=-\Delta j_3(v_1^2v_2) = -\Delta \cdot \til{\alpha}
\end{align*}
using Lemma \ref{lem:v2-Delta} and the earlier fact about
$H'(j_3(v_1^2v_2))$. Note that $j_3(\Delta^n)=0$ since $\Delta^n$ is in the image of $\EE_2(\TMF/3)$. Now apply $j$ to get the statement about $H(\beta_{3/3})$.
The remaining facts in this column are analogous, using the fact that
$\beta_{6/3} = j(j_3(v_2^6))$.
The last column is also proved similarly, using the fact that $\beta_7 =
j(j_3(v_1^2v_2^7))$. 
\end{proof}

By our convention about naming elements in the image of
the map $BP_*\to E_*$, we have $H'_3(v_2)=v_2$.

\begin{proposition}\label{prop:beta9t+3}
For $t\geq 0$ the map $H:E_2(S)\to \EE_2(\TMF)$ satisfies:
\begin{enumerate} 
\item $H(\beta_{9t+1}) \doteq \Delta^{6t}\beta$
\item $H(\beta_{9t+3/3}) \doteq \Delta^{6t+1}\beta$
\item $H(\beta_{9t+6/3}) \doteq \Delta^{6t+3}\beta$
\item $H(\beta_{9t+7}) \doteq \Delta^{6t+4}\beta$.
\end{enumerate}
\end{proposition}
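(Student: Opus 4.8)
The plan is to reduce each of the four statements to the identities already established in Lemma \ref{lem:H(v23)} by multiplying through by powers of $v_2^9$ and tracking the effect of $H$. The key computational input is Lemma \ref{lem:v2-Delta}, which gives $v_2^{3^n} \equiv -\Delta^{2\cdot 3^{n-1}} \pmod{(3,v_1^{2\cdot 3^{n-1}})}$; in particular, taking $n=2$, we have $v_2^9 \equiv -\Delta^6 \pmod{(3,v_1^6)}$ in $E_*$. So for any element $z$ of $E_2(S/(3,v_1^m))$ with $m\leq 6$, we expect $H'_m((v_2^9)^t\cdot z) = H'_m(v_2^9)^t\cdot H'_m(z) \doteq \Delta^{6t}\cdot H'_m(z)$, and then applying the boundary maps $j$ and $j_m$ (which commute with $H$, $H'$, $H'_m$ by naturality) will produce the stated formulas.

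Concretely, I would proceed statement by statement. For (1): $\beta_{9t+1} = j(j_1(v_2^{9t+1})) = j(j_3(v_1^2 v_2^{9t+1}))$ by Lemma \ref{lem:delta-switch}, and $H'_3(v_1^2 v_2^{9t+1}) = H'_3(v_2^9)^t\cdot H'_3(v_1^2 v_2) \doteq \Delta^{6t}\cdot H'_3(v_1^2 v_2)$ working mod $(3,v_1^3)$; applying $j_3$ and using $j_3(\Delta^n)=0$ along with $H'(j_3(v_1^2v_2))\doteq\til\alpha$ gives $H'(j_3(v_1^2v_2^{9t+1}))\doteq \Delta^{6t}\til\alpha$, and then $j(\til\alpha)=\beta$ yields $H(\beta_{9t+1})\doteq\Delta^{6t}\beta$. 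For (2): $\beta_{9t+3/3} = j(j_3(v_2^{9t+3}))$, and $H'_3(v_2^{9t+3}) = H'_3(v_2^9)^t\cdot H'_3(v_2^3)\doteq \Delta^{6t}\cdot(-\Delta^2 - v_1^2v_2\Delta)$; applying $j_3$ kills the $\Delta^{6t+2}$ term and leaves $-\Delta^{6t+1}j_3(v_1^2v_2)\doteq\Delta^{6t+1}\til\alpha$, so $H(\beta_{9t+3/3})\doteq\Delta^{6t+1}\beta$. Statements (3) and (4) are entirely analogous, using $\beta_{9t+6/3} = j(j_3(v_2^{9t+6}))$ together with the $v_2^6$ formula of Lemma \ref{lem:v2-Delta}, and $\beta_{9t+7} = j(j_3(v_1^2v_2^{9t+7}))$ together with $H'(j_3(v_1^2v_2^7))\doteq\Delta^4\til\alpha$ from Lemma \ref{lem:H(v23)}.

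The only real subtlety is bookkeeping of the $v_1$-adic congruences: each application of Lemma \ref{lem:v2-Delta} is only valid modulo some power of $v_1$, so I must make sure that all the elements in play genuinely live in $E_2(S/(3,v_1^m))$ for an $m$ small enough that the congruence applies, and that after applying $j_m$ the error terms vanish. Since $j_m$ annihilates everything in the image of $E_2(\TMF/3)$ — in particular every power of $\Delta$ — the $\Delta^{6t+k}$ summands with no $v_1$ or $v_2$ factor drop out, and what remains is a single term of the form $\Delta^{6t+k}\cdot j_3(v_1^2v_2)\doteq\Delta^{6t+k}\til\alpha$. The main obstacle, such as it is, is simply being careful that the multiplicativity $H'_m((v_2^9)^t z) \doteq \Delta^{6t}H'_m(z)$ is legitimate: this uses that $S/(3,v_1^m)$ and $\TMF/(3,v_1^m)$ are ring spectra (Theorem \ref{thm:oka-ring}) so that $H'_m$ is multiplicative, and that $v_2^9\in E_2(S/(3,v_1^m))$ really does map to $-\Delta^6$ up to a unit and higher $v_1$ terms that wash out. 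No deep new idea is needed beyond the formulas already in hand.
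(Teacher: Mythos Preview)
Your proposal is correct and essentially identical to the paper's proof: both use $v_2^9\equiv -\Delta^6\pmod{(3,v_1^6)}$ from Lemma~\ref{lem:v2-Delta}, pull the boundary maps $j,j_3$ through $H$, factor $H'_3(v_2^{9t+a})$ multiplicatively, kill the pure $\Delta$-power terms via $j_3(\Delta^n)=0$, and reduce to the identities of Lemma~\ref{lem:H(v23)}. The paper only writes out case (2) explicitly and declares the others analogous, exactly as you do.
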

\begin{proof}
These statements are all proved the same way; we show (2).
First observe that Lemma \ref{lem:v2-Delta} implies $v_2^9\equiv -\Delta^6\pmod
{(3,v_1^6)}$. Using Lemma \ref{lem:v2-Delta} and Lemma \ref{lem:H(v23)} we have:
\begin{align*}
H(\beta_{9t+3/3}) & = H(j(j_3(v_2^{9t+3}))) = j(j_3(H'_3(v_2^{9t+3})))
\\ & = j(j_3(H'_3(v_2^3)\cdot H'_3(v_2^{9t}))) =
j(j_3((-\Delta^2-v_1^2v_2\Delta)\cdot (-1)^t\Delta^{6t}))
\\ & = j(j_3((-1)^{t+1}\Delta^{6t+2})) +
j(j_3((-1)^{t+1}v_1^2v_2\Delta^{6t+1}))
\\ & = 0 + (-1)^{t+1}\Delta^{6t+1}j(j_3(v_1^2v_2)) =
(-1)^{t+1}\Delta^{6t+1}\beta.
\end{align*}
In the last line we are using the fact that $j_3(v_1^2v_2) =
\til{\alpha}$ in $E_2^E(\TMF/3)$ from Lemma \ref{lem:H(v23)}.
\end{proof}

In the next theorem, we show that every element in $\pi_*\tmf$ detected in
filtration $\geq 2$ is in the Hurewicz image. This result is stated without
proof in \cite[\S1]{tmf-book-henriques}, but we do not know of any prior proof
in the literature.

\begin{theorem}\label{thm:hurewicz-image}
The image of the map $h:\pi_*S\to \pi_*\tmf$ at $p=3$ consists of
the $\Z_{(3)}$ summand generated by 1 and the $\F_3$ summands generated by
\begin{align*}
\alpha,\  \Delta^{3t}\beta^i\text{ for } 1\leq i\leq 4,\
\Delta^{3t}\alpha\beta,\  \Delta^{3t}\beta b
\end{align*}
for $t\geq 0$. More precisely, we have
\begin{align*}
h(\alpha_1) & = \alpha
\\h(\beta_1^{i-1}\beta_{9t+1}) & = \Delta^{6t}\beta^i \text{ for }1\leq i\leq 4
\\h(\alpha_1\beta_{9t+3/3}) & = \Delta^{6t}\beta b
\\h(\beta_1^{i-1}\beta_{9t+6/3}) & = \Delta^{6t+3}\beta^i \text{ for }1\leq i\leq 4
\\h(\alpha_1\beta_{9t+7}) & = \Delta^{6t+3}\beta b.
\end{align*}
\end{theorem}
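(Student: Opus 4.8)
\textbf{Proof proposal for Theorem \ref{thm:hurewicz-image}.}

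The plan is to reduce the computation of the Hurewicz image of $\tmf$ to a computation in $\pi_*(L_{K(2)}\TMF)$, where the $E$-based Adams spectral sequence and the formulas of Henn--Karamanov--Mahowald give us good control. First I would establish the injection $\pi_*(\tmf)\to \pi_*(L_{K(2)}\TMF)$ (Lemma \ref{lem:tmf-localization-injective}, to be proved separately, presumably using that $\pi_*\tmf$ is all $v_2$-torsion-free in positive filtration together with the fact that $\Delta^3$-multiplication is injective by Theorem \ref{thm:homotopy-tmf}), so that detecting a homotopy class in $L_{K(2)}\TMF$ suffices to detect it in $\tmf$. The commuting square of unit maps $S\to\TMF$, $S\to L_{K(2)}\TMF$ then lets me factor the Hurewicz map through $\pi_*(L_{K(2)}\TMF)$ and work entirely with $\EE_r(\TMF)$.

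Next I would split the argument according to Adams--Novikov (equivalently $E$-based Adams) filtration. In filtrations $0$ and $1$, the image is computed by hand: $h(1)=1$, $h(\alpha_1)=\alpha$ by Lemma \ref{lem:H(v23)}, and one checks using the known structure of $\EE_2(\TMF)$ (or Figure \ref{fig:tmf}) that nothing else in filtration $\le 1$ survives to be hit --- the $b$ and $c_4,c_6,\Delta$ classes are not in the image since $\pi_*(S)$ has no $v_2$-periodic families mapping onto them, and in fact the relevant source groups $\pi_*(S)$ vanish or consist only of image-of-$J$ classes. For filtration $\ge 2$: by Theorem \ref{thm:homotopy-tmf} every such class is a $\Delta^{3t}$-multiple ($t\ge 0$) of one of $\beta^i$ ($1\le i\le 4$), $\alpha\beta$, or $\beta b$; so it suffices to exhibit, for each of these, a permanent cycle in $E_2(S)$ whose Hurewicz image is the named class. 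Here I would invoke Proposition \ref{prop:beta9t+3}: $H(\beta_{9t+1})\doteq\Delta^{6t}\beta$ and $H(\beta_{9t+6/3})\doteq\Delta^{6t+3}\beta$ handle the $\beta$-tower in even multiples of $\Delta^3$; multiplying by $\beta_1^{i-1}$ (and using $H(\beta_1)\doteq\beta$, $\beta_1^4\ne 0$ appropriately) gives $\Delta^{6t}\beta^i$ and $\Delta^{6t+3}\beta^i$; and $H(\beta_{9t+3/3})\doteq\Delta^{6t+1}\beta$, $H(\beta_{9t+7})\doteq\Delta^{6t+4}\beta$ composed with $\alpha$ (using $H(\alpha_1)=\alpha$ and the bracket identities $\alpha_1\beta_{3/3}\in\an{\alpha_1,\alpha_1,\beta_1^3}$) produce $\Delta^{6t}\beta b$ and $\Delta^{6t+3}\beta b$, since in $\pi_*\tmf$ one has $\alpha\cdot\Delta\beta\doteq\beta b$ type relations read off Figure \ref{fig:tmf}. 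All of the source classes $\beta_{9t+1}$, $\alpha_1\beta_{9t+3/3}$, $\beta_{9t+6/3}$, $\alpha_1\beta_{9t+7}$ and their $\beta_1$-multiples are permanent cycles by Theorem \ref{thm:beta} and \cite{behrens-pemmaraju}, so their images are genuine homotopy elements.

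The last point to nail down is that the images listed are \emph{nonzero} in $\pi_*(\tmf)$ and account for \emph{everything} in filtration $\ge 2$ --- i.e., that the $\EE_2$-level statements of Proposition \ref{prop:beta9t+3} survive the $E$-based Adams spectral sequence and that multiplication by $\Delta^{3t}$ does not annihilate them. The surjectivity onto $\EE_2(\TMF)$ in filtration $\ge 2$ follows because, after inverting $\Delta^3$ (legitimate by the injectivity clause of Theorem \ref{thm:homotopy-tmf}), the groups in question are spanned by $\beta^i,\alpha\beta,\beta b$ times powers of $\Delta$, each of which we have hit; nonvanishing of the images follows because $\Delta^3$-multiplication is injective on $\pi_*\tmf$ and the $t=0$ classes $\beta^i,\alpha\beta,\beta b$ are nonzero permanent cycles (visible in Figure \ref{fig:tmf}), so no hidden extension or differential can kill a $\Delta^{3t}$-multiple without killing the base class. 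I expect the main obstacle to be the bookkeeping around the unit-ambiguity ($\doteq$ rather than $=$) and the hidden $\alpha$-extensions indicated by the dashed lines in Figure \ref{fig:tmf}: one must verify that $H(\alpha_1\beta_{9t+3/3})$ really lands in the summand generated by $\Delta^{6t}\beta b$ and not merely in its filtration, which requires knowing that $\alpha$ acts on $\Delta\beta$ (resp.\ $\Delta^4\beta$) as claimed in $\pi_*\tmf$ --- this is exactly the hidden-$\alpha$-multiple data encoded in Figure \ref{fig:tmf}, and carefully matching those against the bracket descriptions $\alpha_1\beta_{3/3}\in\an{\alpha_1,\alpha_1,\beta_1^3}$, $\alpha_1\beta_7\in\an{\alpha_1,\alpha_1,\beta_1^2\beta_{6/3}}$ is the delicate step.
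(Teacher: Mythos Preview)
Your overall architecture matches the paper's proof closely: reduce to $L_{K(2)}\TMF$ via the injectivity of $L$, split by filtration, use Proposition \ref{prop:beta9t+3} together with Theorem \ref{thm:beta} and \cite{behrens-pemmaraju} to hit everything in filtration $\geq 2$, and check filtrations $0$ and $1$ by hand. The filtration $\geq 2$ part of your sketch is fine and essentially identical to the paper's.

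The gap is in your treatment of filtrations $0$ and $1$. Your justification that ``the relevant source groups $\pi_*(S)$ vanish or consist only of image-of-$J$ classes'' is both imprecise and, as stated, false: for instance $\Delta^3\alpha$ lives in stem $75$, where $\pi_{75}(S)$ contains non--image-of-$J$ classes. The correct argument operates on the $E_2$ page, not on homotopy. For filtration $0$ one simply notes that $E_2^{s,0}(S)=0$ for $s>0$, so (using Lemma \ref{lem:tmf-localization-injective}(2) to know the target class is still detected in filtration $0$ of $\EE_\iy(\TMF)$) nothing in positive stem and filtration $0$ can be hit. For filtration $1$ you must explicitly exclude $\Delta^{3t}\alpha$ for $t\geq 1$ as well as $\Delta^{3t}b$; your sketch omits the former entirely. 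The paper's argument is: any preimage would lie in $E_2^{s,1}(S)$, where every class $x$ (for $s>3$) satisfies $i(x)=\alpha_1 v_1^k$ for some $k\geq 1$; since $\pi_7(\tmf/3)=0$ one has $h'(\alpha_1 v_1)=0$, hence $h'(i(x))=0$, hence $h(x)$ is $3$-divisible in $\pi_*(\tmf)$; but Figure \ref{fig:tmf} shows there are no nonzero $3$-divisible classes in filtration $1$. You should replace your hand-wave with this argument (or an equivalent one).

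A minor point: your worry about hidden $\alpha$-extensions is handled more cleanly than you suggest. Rather than manipulating Toda brackets, the paper observes directly that $\Delta^{6t+1}\alpha\beta\in\Eel_2(\tmf)$ is a permanent cycle converging to $\Delta^{6t}\beta b$ (this \emph{is} the hidden extension in Figure \ref{fig:tmf}), so $H(\alpha_1\beta_{9t+3/3})=\Delta^{6t+1}\alpha\beta$ on $E_2$ already gives the desired conclusion on $\pi_*$ once you pass through the commuting square \eqref{eq:tmf-vs-TMF} and use injectivity of $L$.
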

\begin{proof}
Let $\Eel_2(\tmf)$ denote the elliptic spectral sequence for $\tmf$ (see
\cite[\S6]{bauer-tmf}); recall this is the $Y(4)$-based Adams spectral sequence
for $\tmf$. There is a map of spectral sequences $L:\Eel_r(\tmf)\to
\EE_r(\TMF)$ that comes from the map on Adams towers induced by the maps
$Y(4)\to MU_P\to E$ (where $MU_P$ denotes periodic $MU$) and $\tmf\to \TMF$.
These maps assemble into a diagram of spectral sequences as follows.
\begin{equation}\label{eq:tmf-vs-TMF} \xymatrix{
\Eel_2(\tmf)\ar@{=>}[d]\ar[r]^-L & \EE_2(\TMF)\ar@{=>}[d] &
E_2(S)\ar[l]_-H\ar@{=>}[d]
\\\pi_*\tmf\ar[r]^-L & \pi_*L_{K(2)}\TMF & \pi_*S\ar[l]_-H\ar@/^1pc/[ll]^h
}\end{equation}
No element $x\in {\Eel_\iy}^{s,0}(\tmf)$ for $s\neq 0$ is in
the image of $h$: Lemma \ref{lem:tmf-localization-injective}(2) implies
$x$ would be detected in filtration 0 of $\EE_\iy(\TMF)$, and $H:E_\iy(S)\to
\EE_\iy(\TMF)$ is zero in filtration 0 for nonzero stems.

Next we turn to elements detected in filtration 1. We have
$H(\alpha_1)=\alpha$ by Lemma \ref{lem:H(v23)}; since we have $H = L\circ h$ as maps
$\pi_*S\to \pi_*L_{K(2)}\TMF$ and $L$ is injective by Lemma
\ref{lem:tmf-localization-injective}, this implies
$h(\alpha_1)=\alpha\in \pi_*\tmf$. The other elements of $\Eel_\iy(\tmf)$ in
filtration 1 are $\Delta^{3t}\alpha$ for $t\geq 1$ and $\Delta^{3t}b$ for $t\geq
0$; we will show that the permanent cycles they represent are not in the
Hurewicz image. By Lemma \ref{lem:tmf-localization-injective}(1) they are in the
image of $h$ if and only if their images in $\pi_*L_{K(2)}\TMF$ are in the image
of $H$. By Lemma \ref{lem:tmf-localization-injective}(2) they
are also detected in filtration 1 in $\EE_\iy(\TMF)$, so if they were in the
image of $H$, they would be the image of a class in $E_2(S)$ in filtration 0 or
1. We have $E_2^{s,0}(S)=0$ for $s>1$, so it suffices to show that the elements
in $E_2^{s,1}(S)$ except for $\alpha_1$ are in the kernel of $h$.
If $x\in E_2^{s,1}(S)$ with $s>3$ then $i(x) = \alpha_1v_1^k$ for some
$k\geq 1$. 
If $h'$ denotes the map $\pi_*(S/3)\to \pi_*(\tmf/3)$ induced by $h$,
we have $h'(\alpha_1v_1) = 0$ since $\pi_7(\tmf/3)=0$.
Thus $i(h(x)) = h'(i(x))=0$ in $\pi_*(S/3)$, which implies that $h(x)$ is
3-divisible. But Figure \ref{fig:tmf} shows that there are no 3-divisible
nonzero targets in Adams-Novikov filtration 1.

We will now show how to use Proposition \ref{prop:beta9t+3} to derive the
remaining claims about $h$; for
multiplicative reasons, it suffices to show $i=1$ in those statements.
We will illustrate
this with the element $\alpha_1\beta_{9t+3/3}$; the other elements are
analogous, using Theorem \ref{thm:beta} for $\beta_{9t+6/3}$ or
\cite[Corollary 1.2]{behrens-pemmaraju} for $\beta_{9t+1}$ in place of Theorem
\ref{thm:beta} below as necessary.
By Proposition \ref{prop:beta9t+3}, $H(\alpha_1\beta_{9t+3/3}) =
\Delta^{6t+1}\alpha\beta$ in $\EE_2(\TMF)$.
Since $\Delta^{6t+1}\alpha\beta$ is a permanent cycle in $\Eel_2(\tmf)$ converging
to $\Delta^{6t}\beta b$, we have that $\Delta^{6t+1}\alpha\beta$ is a permanent cycle in
$\EE_2(\TMF)$ converging to $\Delta^{6t}\beta b$. Theorem \ref{thm:beta}
shows that $\alpha_1\beta_{9t+3/3}$ is a permanent cycle in the Adams-Novikov
spectral sequence; write $\alpha_1\beta_{9t+3/3}$ for the
(non-$\alpha_1$-divisible) element in homotopy it converges to.
The following diagram summarizes these statements by illustrating \eqref{eq:tmf-vs-TMF} applied to these elements.
$$ \xymatrix{
\Delta^{6t+1}\alpha\beta\ar@{|->}[r]^-L\ar@{|.>}[d] &
\Delta^{6t+1}\alpha\beta\ar@{|.>}[d] &
\alpha_1\beta_{9t+3/3}\ar@{|.>}[d]\ar@{|->}[l]_-H
\\\Delta^{6t}\beta b\ar@{|->}[r]^-L & \Delta^{6t}\beta b &
\alpha_1\beta_{9t+3/3}\ar@{|->}[l]_-H\ar@/^1pc/[ll]^h
}$$
Thus $H:\pi_*S\to \pi_*L_{K(2)}\TMF$ satisfies
$H(\alpha_1\beta_{9t+3/3}) = \Delta^{6t}\beta b$. Since $H$ factors through $h$
and $L:\pi_*\tmf \to \pi_*L_{K(2)}\TMF$ is injective by Lemma
\ref{lem:tmf-localization-injective}(1), we have that $h(\alpha_1\beta_{9t+3/3}) = \Delta^{6t} \beta b$.
\end{proof}

\begin{lemma}\label{lem:tmf-localization-injective}
~\begin{enumerate} 
\item The map $L:\pi_*\tmf\to \pi_*L_{K(2)}\TMF$ is injective on the classes in Theorem
\ref{thm:hurewicz-image}.
\item The map $L: \Eel_\iy(\tmf)\to \EE_\iy(\TMF)$ is injective in filtration
0 and 1.
\end{enumerate}
\end{lemma}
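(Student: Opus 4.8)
The plan is to prove (2) first, using the explicitly known $E_2$-pages, and then to bootstrap to (1) by the same kind of low-filtration comparison.

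\textbf{Proof of (2).} First I would note that both $\Eel_r(\tmf)$ and $\EE_r(\TMF)$ are spectral sequences of Adams type, so every differential $d_r$ ($r\geq 2$) strictly raises filtration and nothing can have target in filtration $0$ or $1$. Hence for $f\leq 1$ the group $\Eel_\iy^{s,f}(\tmf)$ is a \emph{sub}group of $\Eel_2^{s,f}(\tmf)$ (the classes supporting no differential), and likewise for $\EE$; since $L$ commutes with differentials it therefore suffices to show $L\colon \Eel_2^{*,f}(\tmf)\to \EE_2^{*,f}(\TMF)$ is injective for $f=0,1$. In filtration $0$ this is the natural map $MF_*\to H^0(G_{24};E_*)$ classifying the Lubin--Tate deformation of the supersingular curve at $3$, and I would prove injectivity by reduction mod $3$: if a modular form $f$ lies in the kernel, then its reduction $\bar f$ vanishes on $\Spf(E_*/3)=\Spf \F_9[[u_1]]$, i.e.\ to infinite order at the supersingular point of the $1$-dimensional integral stack $\overline{\mathcal M}_{\mathrm{ell}}\otimes\F_3$; a nonzero section of a line bundle on such a stack cannot do this, so $\bar f=0$, i.e.\ $f=3f'$, and since $H^0(G_{24};E_*)\subseteq E_*$ is torsion free one repeats with $f'$ and concludes $f\in\bigcap_n 3^nMF_*=0$. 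In filtration $1$ I would use Bauer's computation that $\Eel_2^{*,1}(\tmf)=\F_3[\Delta]\{\alpha\}$ with $\alpha$ in stem $3$; the image in $\EE_2^{*,1}(\TMF)=H^1(G_{24};E_*)$ is $\Delta^k\alpha\mapsto\Delta^k\alpha$, and since $\Delta$ maps to a unit of $H^0(G_{24};E_*)$ (Lemma \ref{lem:v2-Delta}) multiplication by $\Delta$ is invertible on $H^1(G_{24};E_*)$, while $\alpha\neq 0$ there by \cite{HKM}; hence no nonzero $\F_3$-combination of the $\Delta^k\alpha$ is killed.

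\textbf{Proof of (1).} Every class in Theorem \ref{thm:hurewicz-image} other than $1$ and $\alpha$ is detected in $\Eel_\iy(\tmf)$ by a monomial $\Delta^j\alpha^\epsilon\beta^i$ with $\epsilon\in\{0,1\}$ and $1\leq i\leq 4$ (read off Bauer's chart), so in particular in filtration $2i+\epsilon\leq 9$; moreover $L$, being multiplicative with $L(\Delta)=\Delta$, $L(\alpha)=\alpha$, $L(\beta)=\beta$ on $E_2$-pages (Lemma \ref{lem:H(v23)} and its elliptic-spectral-sequence analogue), sends such a detecting class to the class $\Delta^j\alpha^\epsilon\beta^i$ in $\EE_2(\TMF)=H^*(G_{24};E_*)$. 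This target class is nonzero: $\Delta$ is a unit and $\alpha^\epsilon\beta^i\neq 0$ for $i\leq 4$ by \cite{HKM}. It remains to check that it survives, i.e.\ is not a boundary in $\EE_r(\TMF)$; by sparseness (only $d_5,d_9,\dots$ occur at $p=3$) and since it has filtration $\leq 9$, the only possible differentials onto it are $d_5$'s out of filtrations $1$ and $3$, which vanish — on filtration $1$ because $d_5(\Delta^k\alpha)=k\Delta^{k-1}\alpha^2\beta^2=0$ (as $\alpha^2=0$, $d_5\alpha=0$) and on filtration $3$ by the description of $H^3(G_{24};E_*)$ in \cite{HKM} together with $d_5\alpha=d_5\beta=0$. (Equivalently, one could reduce $\pi_*\tmf$ to $\pi_*\TMF$ first, using that $3$-locally $\TMF\simeq\tmf[\Delta^{-1}]$ and $\Delta^3$ — hence $\Delta$ — acts injectively on $\pi_*\tmf$ by Theorem \ref{thm:homotopy-tmf}; and $L(\Delta^{3t}z)$ can also be identified directly from Proposition \ref{prop:beta9t+3}.) Thus each of the listed classes has nonzero image in $\EE_\iy(\TMF)$, hence in $\pi_*L_{K(2)}\TMF$, and $L$ is injective on them; the case of $\alpha$ is already covered by (2), and $1\mapsto 1$ is trivial.

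\textbf{Expected main obstacle.} The formal parts — the Adams-type argument reducing everything to the $E_2$-pages, and the reduction-mod-$3$ proof of $MF_*\hookrightarrow H^0(G_{24};E_*)$ — are routine. The genuine input is knowing enough of the low-filtration part of $H^*(G_{24};E_*)$ from \cite{HKM} to rule out a $d_5$ differential landing on $\beta^i$ for $i=3,4$; this is the one step that is not purely formal, and where I would expect to have to invoke the Henn--Karamanov--Mahowald computation (or, equivalently, the known structure of $\pi_*L_{K(2)}\TMF$) rather than a dimension count.
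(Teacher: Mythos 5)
Your proposal is correct, but it takes a genuinely different route from the paper's on both halves. For part (1) the paper never analyzes $\EE_r(\TMF)$ at all: it uses the identification $\pi_*(L_{K(2)}\TMF)=\bigl(\pi_*(\tmf)[(\Delta^3)^{-1}]\bigr)^\hhat_I$ with $I=(3,c_4)$, notes that $\Delta^3$ acts injectively on $\pi_*\tmf$, and observes that the listed classes are annihilated by $I$ after inverting $\Delta$ (they are $3$-torsion, and $c_4\alpha=c_4\beta=c_4 b=0$ for degree reasons), so $I$-adic completion cannot kill them — a short, purely algebraic argument. Your version instead detects the classes in $\EE_\iy(\TMF)$, which forces you to rule out $d_5$'s landing on the $\beta^3$- and $\beta^4$-classes; this does work (mod 4 sparseness of $H^*(G_{24};E_*)$ via the central element of order $2$, $d_5(\Delta)=\pm\alpha\beta^2$, $\alpha^2=0$), but it needs the integral low-filtration groups $H^1$ and $H^3$ of $G_{24}$, which you must extract from the mod $3$ computation of \cite{HKM} by a Bockstein argument (routine, since positive filtrations have exponent $3$, but it should be made explicit). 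For part (2) the paper reduces everything mod $3$: it shows $L'_2:\Eel_2(\tmf/3)\to\EE_2(\TMF/3)$ is injective by comparing \cite{bauer-tmf} with \cite[Theorem 1.1]{HKM}, and then deals with the integral statement by a $3$-divisibility and torsion-freeness argument (a would-be torsion class in filtration $0$ would force a class of filtration $-1$ in $\EE_2(\TMF/3)$). Your filtration-$1$ argument is essentially the same input packaged differently, while your filtration-$0$ argument replaces the mod $3$ comparison by the moduli-theoretic fact that restriction to the formal neighborhood of the supersingular point is injective on mod $3$ modular forms (irreducibility of $\overline{\mathcal{M}}_{\mathrm{ell}}\otimes\F_3$ plus Krull intersection, then $3$-adic separatedness of $MF_*$ and torsion-freeness of $H^0(G_{24};E_*)$); that is correct and more conceptual, but it imports algebro-geometric input that the paper deliberately avoids by leaning on the explicit $E_2$-page computations. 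Both routes are valid: the paper's buys brevity and minimal knowledge of $\EE_2(\TMF)$, yours buys an explicit survival statement in $\EE_\iy(\TMF)$ of the kind that is in any case implicitly needed in the proof of Theorem \ref{thm:hurewicz-image}.
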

In fact, $L$ is injective on $E_\iy$ pages in all filtrations, but we do not
need this fact.
\begin{proof}
(1) We have
$$ \pi_*(L_{K(2)}\TMF) = \left(\pi_*(\tmf)[(\Delta^3)^{-1}]\right)^\hhat_I $$
where $I=(3,c_4)$ (see \cite[\S2]{tmf-book-henriques}). 
It is clear from the calculation of $\pi_*\tmf$ that the localization map $\pi_*(\tmf) \to (\Delta^3)^{-1}\pi_*(\tmf)$ is
an injection.
It suffices to show that completion at $I$ is injective on the specified
classes. This holds because $0=c_4 \cdot \alpha =c_4 \cdot \beta = c_4 \cdot b$ in
$(\Delta^{24})^{-1}\pi_*\tmf$ for degree reasons
(and these classes are also all 3-torsion).

(2) Consider an element of $\ker(L:\Eel_\iy(\tmf)\to \EE_\iy(\TMF))$ represented
by $x\in \Eel_2(\tmf)$ in filtration 0 or 1. We claim that $x$ is in
$\ker(L_2:\Eel_2(\tmf)\to \EE_2(\TMF))$:
since $L_2(x)$ is in filtration 0 or 1, it cannot be the target of a
$d_r$ differential for $r\geq 2$.
By comparing the calculations of $\Eel_2(\tmf)$ and $\EE_2(\TMF/3)$ 
in \cite[\S5]{bauer-tmf} and \cite[Theorem 1.1]{HKM}, respectively, it is clear
that $L'_2: \Eel_2(\tmf/3)\to \EE_2(\TMF/3)$ is an injection, so the image of
$x$ in $\Eel_2(\tmf/3)$ is zero, which implies (using exactness of the top row
in the diagram) $x\in \Eel_2(\tmf)$ is 3-divisible.
$$ \xymatrix{
\Eel_2(\tmf)\ar[r]^-3 & \Eel_2(\tmf)\ar[r]^-i\ar[d]_-{L_2} &
\Eel_2(\tmf/3)\ar@{^(->}[d]^-{L'_2}
\\ & \EE_2(\TMF)\ar[r]^-i & \EE_2(\TMF/3)
}$$
Since $E_2^\el(\tmf)$ has no 3-divisible classes in filtration 1, we now focus
on the filtration 0 case.
Let $y=x/3^n\in \Eel_2(\tmf)$ be
the non-3-divisible generator, which then has nonzero image $i(y)$ in $\Eel_2(\tmf/3)$. Since $L'_2$ is an
injection, $L'_2(i(y))=i(L_2(y))\neq 0$. We claim that the (nonzero) group
generated by $L_2(y)$ is torsion-free:
if not, then the corresponding top cell class would be a nonzero
class in $\EE_2(\TMF/3)$ in filtration $-1$, contradicting \cite[Theorem
1.1]{HKM}. So $L_2(x)=3^n L_2(y)\neq 0$, contradicting the fact above that $x\in
\ker(L_2)$.
\end{proof}

\begin{remark}
Our methods are not sufficient to completely determine the image of the map
$h':\pi_*(S/3)\to \pi_*(\tmf/3)$. The remaining nontrivial part of this question is to
determine which elements $\Delta^n \alpha$ are in the image. Arguments similar
to those we have given in this section show that $h'(\bar{\beta_{9t+2}}) =
\Delta^{6t+1}\alpha$ and $h'(\bar{\beta_{9t+5}}) = \Delta^{6t+3}\alpha$.
However, the families $\Delta^{6t}\alpha$ for $t\geq 1$ and $\Delta^{6t+4}\alpha$
for $t\geq 0$ fit into patterns that are not described by our work
in this paper. For example, $\Delta^4\alpha\in \pi_{99}(\tmf/3)$ is not in the
image of $h'$ for degree reasons. On the other hand, using
the more precise definitions of the $\beta$ elements in \cite[(2.4)]{MRW} and
calculating analogously to Lemma \ref{lem:H(v23)}, we find that the
map $E_2(S/3)\to \EE_2(\TMF/3)$ sends $\bar{\beta_{18/11}}$ to
$\Delta^{10}\alpha$. As we do not know if $\bar{\beta_{18/11}}$ is a permanent
cycle, we are unable to conclude whether $\Delta^{10}\alpha$ is in the image of
$\pi_*(S/3)$.
\end{remark}

\printbibliography

@incollection {behrens-pemmaraju,
    AUTHOR = {Behrens, Mark and Pemmaraju, Satya},
     TITLE = {On the existence of the self map {$v^9_2$} on the
              {S}mith-{T}oda complex {$V(1)$} at the prime 3},
 BOOKTITLE = {Homotopy theory: relations with algebraic geometry, group
              cohomology, and algebraic {$K$}-theory},
    SERIES = {Contemp. Math.},
    VOLUME = {346},
     PAGES = {9--49},
 PUBLISHER = {Amer. Math. Soc., Providence, RI},
      YEAR = {2004},
   MRCLASS = {55Q51 (55Q45 55T15)},
  MRNUMBER = {2066495},
MRREVIEWER = {R. E. Stong},
       DOI = {10.1090/conm/346/06284},
       URL = {https://doi.org/10.1090/conm/346/06284},
}

@article {shimomura-note-beta,
    AUTHOR = {Shimomura, Katsumi},
     TITLE = {Note on beta elements in homotopy, and an application to the
              prime three case},
   JOURNAL = {Proc. Amer. Math. Soc.},
  FJOURNAL = {Proceedings of the American Mathematical Society},
    VOLUME = {138},
      YEAR = {2010},
    NUMBER = {4},
     PAGES = {1495--1499},
      ISSN = {0002-9939},
   MRCLASS = {55Q45 (55T15)},
  MRNUMBER = {2578544},
MRREVIEWER = {Brian A. Munson},
       DOI = {10.1090/S0002-9939-09-10190-9},
       URL = {https://doi.org/10.1090/S0002-9939-09-10190-9},
}

@article {shimomura-beta-tpr,
    AUTHOR = {Shimomura, Katsumi},
     TITLE = {The beta elements {$\beta_{tp^2/r}$} in the homotopy of
              spheres},
   JOURNAL = {Algebr. Geom. Topol.},
  FJOURNAL = {Algebraic \& Geometric Topology},
    VOLUME = {10},
      YEAR = {2010},
    NUMBER = {4},
     PAGES = {2079--2090},
      ISSN = {1472-2747},
   MRCLASS = {55Q45 (55Q10 55T15)},
  MRNUMBER = {2745666},
MRREVIEWER = {Alice Kimie Miwa Libardi},
       DOI = {10.2140/agt.2010.10.2079},
       URL = {https://doi.org/10.2140/agt.2010.10.2079},
}

@article{shimomura-beta,
  title={The existence of $\beta_{9t+3}$ in the stable homotopy of spheres at
		 the prime three},
  author={Shimomura, Katsumi},
  url={http://www.math.kochi-u.ac.jp/katsumi/paper/beta9-shimomura.pdf},
  year={2006}
}

@article{MRW,
	doi = {10.2307/1971064},
	url = {https://doi.org/10.2307%2F1971064},
	year = 1977,
	month = {nov},
	publisher = {{JSTOR}},
	volume = {106},
	number = {3},
	pages = {469},
	author = {Haynes R. Miller and Douglas C. Ravenel and W. Stephen Wilson},
	title = {Periodic Phenomena in the Adams-Novikov Spectral Sequence},
	journal = {The Annals of Mathematics}
}

@book{green,
    AUTHOR = {Ravenel, Douglas C.},
     TITLE = {Complex cobordism and stable homotopy groups of spheres},
    SERIES = {Pure and Applied Mathematics},
    VOLUME = {121},
 PUBLISHER = {Academic Press Inc., Orlando, FL},
      YEAR = {1986},
     PAGES = {xx+413},
      ISBN = {0-12-583430-6; 0-12-583431-4},
   MRCLASS = {55-02 (55Qxx 57-02)}
}

@article {oka-beta,
    AUTHOR = {Oka, Shichir\^{o}},
     TITLE = {Note on the {$\beta $}-family in stable homotopy of spheres at
              the prime {$3$}},
   JOURNAL = {Mem. Fac. Sci. Kyushu Univ. Ser. A},
  FJOURNAL = {Memoirs of the Faculty of Science. Kyushu University. Series
              A. Mathematics},
    VOLUME = {35},
      YEAR = {1981},
    NUMBER = {2},
     PAGES = {367--373},
      ISSN = {0373-6385},
   MRCLASS = {55Q10 (55T15)},
  MRNUMBER = {628729},
MRREVIEWER = {Haynes R. Miller},
       DOI = {10.2206/kyushumfs.35.367},
       URL = {https://doi.org/10.2206/kyushumfs.35.367},
}

@article {shimomura-L2-toda-smith,
    AUTHOR = {Shimomura, Katsumi},
     TITLE = {The homotopy groups of the {$L_2$}-localized {T}oda-{S}mith
              spectrum {$V(1)$} at the prime {$3$}},
   JOURNAL = {Trans. Amer. Math. Soc.},
  FJOURNAL = {Transactions of the American Mathematical Society},
    VOLUME = {349},
      YEAR = {1997},
    NUMBER = {5},
     PAGES = {1821--1850},
      ISSN = {0002-9947},
   MRCLASS = {55P42 (55P60 55T15)},
  MRNUMBER = {1370651},
MRREVIEWER = {Donald M. Davis},
       DOI = {10.1090/S0002-9947-97-01710-8},
       URL = {https://doi.org/10.1090/S0002-9947-97-01710-8},
}

@article {ravenel-arf,
    AUTHOR = {Ravenel, Douglas C.},
     TITLE = {The non-existence of odd primary {A}rf invariant elements in
              stable homotopy},
   JOURNAL = {Math. Proc. Cambridge Philos. Soc.},
  FJOURNAL = {Mathematical Proceedings of the Cambridge Philosophical
              Society},
    VOLUME = {83},
      YEAR = {1978},
    NUMBER = {3},
     PAGES = {429--443},
      ISSN = {0305-0041},
   MRCLASS = {55E45},
  MRNUMBER = {474291},
MRREVIEWER = {Donald W. Kahn},
       DOI = {10.1017/S0305004100054712},
       URL = {https://doi.org/10.1017/S0305004100054712},
}

@article {toda-realizing,
    AUTHOR = {Toda, Hirosi},
     TITLE = {On spectra realizing exterior parts of the {S}teenrod algebra},
   JOURNAL = {Topology},
  FJOURNAL = {Topology. An International Journal of Mathematics},
    VOLUME = {10},
      YEAR = {1971},
     PAGES = {53--65},
      ISSN = {0040-9383},
   MRCLASS = {55.34},
  MRNUMBER = {271933},
MRREVIEWER = {A. Liulevicius},
       DOI = {10.1016/0040-9383(71)90017-6},
       URL = {https://doi.org/10.1016/0040-9383(71)90017-6},
}

@misc{behrens-mahowald-quigley,
      title={The 2-primary {Hurewicz} image of tmf}, 
      author={Mark Behrens and Mark Mahowald and J. D. Quigley},
      year={2020},
      arxiv={2011.08956}
}

@article {oka-ring,
    AUTHOR = {Oka, Shichir\^{o}},
     TITLE = {Ring spectra with few cells},
   JOURNAL = {Japan. J. Math. (N.S.)},
  FJOURNAL = {Japanese Journal of Mathematics. New Series},
    VOLUME = {5},
      YEAR = {1979},
    NUMBER = {1},
     PAGES = {81--100},
      ISSN = {0289-2316},
   MRCLASS = {55P42 (55Q10)},
  MRNUMBER = {614695},
MRREVIEWER = {J. P. May},
       DOI = {10.4099/math1924.5.81},
       URL = {https://doi.org/10.4099/math1924.5.81},
}

@misc{guozhen-data,
	title = {{Adams-Novikov} data.},
	author = {Eva Belmont and Guozhen Wang},
	url = {https://github.com/ebelmont/ANSS{\_}data/raw/master/anss{\_}E2{\_}158.pdf}
}

@misc{guozhen3,
	title = {{MinimalResolution} (code for computing algebraic {Novikov}
		  spectral sequence, p=3 version)},
	author = {Eva Belmont and Guozhen Wang},
	url = {https://github.com/ebelmont/MinimalResolution}
}

@article {guozhen-documentation,
    AUTHOR = {Wang, Guozhen},
     TITLE = {Computations of the {A}dams-{N}ovikov {$E_2$}-term},
   JOURNAL = {Chin. Ann. Math. Ser. B},
  FJOURNAL = {Chinese Annals of Mathematics. Series B},
    VOLUME = {42},
      YEAR = {2021},
    NUMBER = {4},
     PAGES = {551--560},
      ISSN = {0252-9599},
   MRCLASS = {55T15 (55Q45)},
  MRNUMBER = {4289191},
       DOI = {10.1007/s11401-021-0277-2},
       URL = {https://doi.org/10.1007/s11401-021-0277-2},
}

@misc{guozhen-github,
	title = {{MinimalResolution} (code for computing algebraic {Novikov}
		  spectral sequence)\hspace{10pt}},
	author = {Guozhen Wang},
	note = {\url{https://github.com/pouiyter/MinimalResolution}}
}

@article{palmieri-book,
    AUTHOR = {Palmieri, John H.},
     TITLE = {Stable homotopy over the {S}teenrod algebra},
   JOURNAL = {Mem. Amer. Math. Soc.},
  FJOURNAL = {Memoirs of the American Mathematical Society},
    VOLUME = {151},
      YEAR = {2001},
    NUMBER = {716},
     PAGES = {xiv+172},
      ISSN = {0065-9266},
   MRCLASS = {55S10 (16W30 55T15 55U15 55U35)},
       DOI = {10.1090/memo/0716},
       URL = {https://doi.org/10.1090/memo/0716},
}

@incollection {bauer-tmf,
    AUTHOR = {Bauer, Tilman},
     TITLE = {Computation of the homotopy of the spectrum {\tt tmf}},
 BOOKTITLE = {Groups, homotopy and configuration spaces},
    SERIES = {Geom. Topol. Monogr.},
    VOLUME = {13},
     PAGES = {11--40},
 PUBLISHER = {Geom. Topol. Publ., Coventry},
      YEAR = {2008},
   MRCLASS = {55N34 (55T15)},
  MRNUMBER = {2508200},
MRREVIEWER = {Tyler D. Lawson},
       DOI = {10.2140/gtm.2008.13.11},
       URL = {https://doi.org/10.2140/gtm.2008.13.11},
}

@article {HKM,
    AUTHOR = {Henn, Hans-Werner and Karamanov, Nasko and Mahowald, Mark},
     TITLE = {The homotopy of the {$K(2)$}-local {M}oore spectrum at the
              prime 3 revisited},
   JOURNAL = {Math. Z.},
  FJOURNAL = {Mathematische Zeitschrift},
    VOLUME = {275},
      YEAR = {2013},
    NUMBER = {3-4},
     PAGES = {953--1004},
      ISSN = {0025-5874},
   MRCLASS = {55Q52 (55P15)},
  MRNUMBER = {3127044},
MRREVIEWER = {Jie Wu},
       DOI = {10.1007/s00209-013-1167-4},
       URL = {https://doi.org/10.1007/s00209-013-1167-4},
}

@incollection{tmf-book-henriques,
	doi = {10.1090/surv/201/13},
	url = {https://doi.org/10.1090%2Fsurv%2F201%2F13},
	year = 2014,
	month = {Dec},
	publisher = {American Mathematical Society},
	pages = {189--205},
	author = {Andr{\'{e}} G. Henriques},
	title = {The homotopy groups of \textit{tmf} and of its localizations},
	booktitle = {Topological Modular Forms}
}

@misc{goerss-ANSS-notes,
      title={The {Adams-Novikov} Spectral Sequence and the Homotopy Groups of Spheres}, 
      author={Paul Goerss},
      year={2007},
	  note={\,\,Notes from lectures at IRMA Strasbourg, May 7-11, 2007},
      arxiv={0802.1006}
}

\end{document}